\theoremstyle{plain}
\newtheorem{proposition}{Proposition}[section]
\newtheorem{theorem}[proposition]{Theorem}
\newtheorem{lemma}[proposition]{Lemma}
\newtheorem{corollary}[proposition]{Corollary}
\theoremstyle{definition}
\newtheorem{definition}[proposition]{Definition}
\newtheorem{observation}[proposition]{Observation}
\theoremstyle{remark}
\newtheorem{remark}[proposition]{Remark}
\DeclareMathOperator{\Aut}{Aut}
\DeclareMathOperator{\diam}{diam}
\DeclareMathOperator{\GL}{GL}
\DeclareMathOperator{\PGL}{PGL}
\DeclareMathOperator{\End}{End}
\DeclareMathOperator{\Span}{Span}
\DeclareMathOperator{\Homeo}{Homeo}
\DeclareMathOperator{\CH}{ConvHull}
\DeclareMathOperator{\Spanset}{Span}
\DeclareMathOperator{\hil}{d_{\Omega}}
\DeclareMathOperator{\relint}{rel-int}
\DeclareMathOperator{\Bc}{\mathcal{B}}
\DeclareMathOperator{\Cc}{\mathcal{C}}
\DeclareMathOperator{\Gc}{\mathcal{G}}
\DeclareMathOperator{\Lc}{\mathcal{L}}
\DeclareMathOperator{\Nc}{\mathcal{N}}
\DeclareMathOperator{\Pc}{\mathcal{P}}
\DeclareMathOperator{\Sc}{\mathcal{S}}
\DeclareMathOperator{\Uc}{\mathcal{U}}
\DeclareMathOperator{\Vc}{\mathcal{V}}
\DeclareMathOperator{\Xc}{\mathcal{X}}
\DeclareMathOperator{\Hb}{\mathbb{H}}
\DeclareMathOperator{\Nb}{\mathbb{N}}
\DeclareMathOperator{\Pb}{\mathbb{P}}
\DeclareMathOperator{\Rb}{\mathbb{R}}
\DeclareMathOperator{\partiali}{\partial_i}
\DeclareMathOperator{\partialn}{\partial_n}
\DeclareMathOperator{\dist}{d}
\DeclareMathOperator{\Stab}{Stab}
\DeclareMathOperator{\Haus}{Haus}
\newcommand{\abs}[1]{\left|#1\right|}
\newcommand{\wh}[1]{\widehat{#1}}
\newcommand{\ip}[1]{\left\langle #1\right\rangle}
\newcommand{\eqv}[1]{#1/{\sim}}
\DeclareMathOperator{\qcc}{[\partiali \Cc]_{\Xc}}
\DeclareMathOperator{\limset}{\Lc_{\Omega}}
\DeclareMathOperator{\core}{\Cc_{\Omega}}
\begin{document}
\title[Relatively hyperbolic groups in convex real projective geometry]{The structure of relatively hyperbolic groups in convex real projective geometry}
\author{Mitul Islam}\address{Mathematisches Institut, Im Neuenheimer Feld 205, Heidelberg 69120, Germany}
\email{mislam@mathi.uni-heidelberg.de}
\author{Andrew Zimmer}\address{Department of Mathematics, University of Wisconsin-Madison, Madison, WI 53706.}
\email{amzimmer2@wisc.edu}

\date{\today}
\keywords{}
\subjclass[2010]{}

\maketitle 

\begin{abstract}
In this paper we prove a general structure theorem for relatively hyperbolic groups (with arbitrary peripheral subgroups) acting naive convex co-compactly on properly convex domains in real projective space. We also establish a characterization of such groups in terms of the existence of an invariant collection of closed unbounded convex subsets with good isolation properties. This is a real projective analogue of results of Hindawi-Hruska-Kleiner for ${\rm CAT}(0)$ spaces. We also obtain an equivariant homeomorphism between the Bowditch boundary of the group and a quotient of the ideal boundary. 
\end{abstract}

\section{Introduction}

Let $\Hb^d$ denote real hyperbolic $d$-space and recall that a discrete subgroup $\Gamma \leq \mathrm{Isom}(\Hb^d)$ is called \emph{convex co-compact} if there exists a non-empty $\Gamma$-invariant geodesically convex closed subset $\Cc \subset \Hb^d$ where the quotient $\Gamma \backslash \Cc$ is compact. The Beltrami-Klein model realizes $\Hb^d$ as a properly convex domain $\mathbb{B}^d \subset \Pb(\Rb^{d+1})$ (namely the Euclidean unit ball in a standard affine chart) in such a way that the isometry group  $\mathrm{Isom}(\Hb^d)$ coincides with the subgroup of $\PGL_d(\Rb)$ which preserves $\mathbb{B}^d$ (namely ${\rm PSO}(d,1)$ up to conjugation). Further, in this model, a subset being geodesically convex  is equivalent to being convex in some (hence any) affine chart that contains $\mathbb{B}^d$.

The Beltrami-Klein model perspective allows one to naturally generalize the classical notion of convex co-compact groups. In particular, one can consider a general properly convex domain $\Omega \subset \Pb(\Rb^d)$ and the group $\Aut(\Omega) \leq \PGL_d(\Rb)$ of projective automorphisms which preserve $\Omega$. Then a discrete subgroup $\Gamma \leq \Aut(\Omega)$ is called \emph{naive convex co-compact} if there exists a non-empty $\Gamma$-invariant closed convex subset $\Cc \subset \Omega$ where the quotient $\Gamma \backslash \Cc$ is compact. In this case, we say that $(\Omega,\Cc, \Gamma)$ is a \emph{naive convex co-compact triple}. 

When the group $\Gamma$ is word hyperbolic (e.g. in the classical real hyperbolic setting), there is a close connection between this notion of naive convex co-compact groups and Anosov representations/higher Teichm\"{u}ller theory, see \cite{DGF2017} or \cite{Z2017}. As one moves beyond the word hyperbolic case, the structure of these discrete groups becomes more mysterious. 

In this paper we study naive convex co-compact groups which are (intrinsically) relatively hyperbolic groups. This is a rich class with many examples, for instance where
\begin{enumerate}[(a)]
\item $\Gamma$ is a projective reflection group generated by reflection along faces of a projective Coxeter polytope \'{a} la Vinberg and $\Gamma$ is irreducible as a Coxeter group (see \cite{LM2022,DGKLM2021}), and 
\item $\Gamma$ is isomorphic to $\pi_1(M)$ where $M$ is a closed three-manifold such that each geometric component in its JSJ decomposition supports a hyperbolic structure  (see \cite{B2006,IZ2020}). 
\end{enumerate}
We note that in both of these cases, the groups are relatively hyperbolic with respect to peripheral subgroups which are virtually Abelian of rank at least two. However, there also exist examples where $\Gamma$ is relatively hyerpbolic with respect to non virtually Abelian subgroups, see the discussion in~\cite[Section 2.6.3]{W2020}. The primary goal of this paper is to  describe the structure of such examples. One of our main results is proving that the relative hyperbolicity of $\Gamma$ is equivalent to the existence of a so-called \emph{peripheral family}, a $\Gamma$-invariant collection of closed convex subsets with good isolation properties.

This investigation extends some recent work. Previously in~\cite{IZ2019b}, we considered the special case where the peripheral subgroups were virtually Abelian of rank at least two. For such groups, we proved that relative hyperbolicity is equivalent to the existence of a collection of properly embedded simplices with good isolation properties. This geometric description is analogous to the case of ${\rm CAT}(0)$ spaces with isolated flats \cite{HK2005}. We also showed that the boundary of these simplices are, in a technical sense, the only places where the boundary is irregular (see~\cite[Theorem 1.19 (6), Theorem 1.8 (7,8)]{IZ2019b}). Shortly after, Weisman~\cite{W2020} considered convex co-compact groups (a more restrictive class than naive convex co-compact) who were relatively hyperbolic and whose peripheral subgroups were also convex co-compact, but not necessarily virtually Abelian. For such groups he established a similar result about the irregular  boundary points and also showed that the Bowditch boundary could be realized as a quotient of the  boundary.

 There are a number of other results in the literature concerning relatively hyperbolic groups acting on properly convex domains, see for instance~\cite{CLT2015, Choi2017, Choi2017b, Choi_book}. These results consider the case when $\Gamma \backslash \Cc$ is non-compact and characterize when $\Gamma$ is relatively hyperbolic with respect to the fundamental groups of the ends (under some geometric assumptions on the ends and $\Cc$). There is some similarity between the statements in this paper and the statements in~\cite{Choi2017, Choi2017b, Choi_book}, but to the best of our knowledge, there is no actual overlap between the results.

In this paper we extend  the results in~\cite{IZ2019b, W2020} to the case of general peripheral subgroups and the case of general naive convex co-compact groups. The proofs build upon ideas from both papers.

We will now introduce the notation required to precisely state our main results. Given a properly convex domain $\Omega \subset \Pb(\Rb^d)$, we will let $\hil$ denote the Hilbert metric, which is a natural and classical $\Aut(\Omega)$-invariant, proper, and complete metric on $\Omega$, see Section~\ref{sec: convexity and the Hilbert metric} below. This allows us to speak of the diameter $\diam_\Omega(A)$ and $r$-open neighborhood $\Nc_\Omega(A, r) \subset \Omega$ of a subset $A \subset \Omega$ relative to the Hilbert metric.

\begin{definition} 
\label{defn:peripheral_family}
Suppose that $\Omega \subset \Pb(\Rb^d)$ is a properly convex domain, $\Cc \subset \Omega$ is a closed convex subset, $\Gamma \leq \Aut(\Omega)$, and $\Xc$ is a collection of closed unbounded convex subsets of $\Cc$. Then we say
\begin{enumerate}
\item $\Xc$ is $\Gamma$\emph{-invariant} if $\Gamma \cdot \Xc =\Xc$.
\item $\Xc$ is \emph{strongly isolated} if for every $r > 0$ there exists $D_1(r) > 0$ such that: if $X_1, X_2 \in \Xc$ are distinct, then
\begin{align*}
\diam_\Omega \left( \Nc_\Omega(X_1,r) \cap \Nc_\Omega(X_2,r) \right) \leq D_1(r).
\end{align*}
\item $\Xc$ \emph{coarsely contains the properly embedded simplices of $\Cc$} if there exists $D_2 > 0$ such that: if $S \subset \Cc$ is a properly embedded simplex of dimension at least two, then there exists $X \in \Xc$ with $S \subset \Nc_\Omega(X, D_2)$. 
\end{enumerate}
When $(\Omega, \Cc, \Gamma)$ is a naive convex co-compact triple and $\Xc$ satisfies all three of the above conditions we say that $\Xc$ is a  \emph{peripheral family of $(\Omega, \Cc, \Gamma)$}. 
\end{definition}

Given a convex subset $\Cc \subset \Omega$ of a properly convex domain, the \emph{ideal boundary} of $\Cc$ is $\partiali \Cc: = \overline{\Cc} \cap \partial\Omega$. Also, given $x \in \overline{\Omega}$ we will let $F_\Omega(x)$ denote the open face of $x$ in  $\overline{\Omega}$ and given $A \subset \overline{\Omega}$ we will let $F_\Omega(A) = \cup_{x \in A} F_\Omega(x)$. Using these boundary faces and a peripheral family, one can define a natural quotient of the ideal boundary

\begin{definition}\label{defn:boundary quotient} Suppose that $\Xc$ is a peripheral family of $(\Omega, \Cc, \Gamma)$. Let 
$$ 
\qcc: = \partiali\Cc /{\sim}
$$ 
be the topological quotient where  $x \sim y$ if and only if 
\begin{enumerate}
\item $x, y \in F_\Omega(\partiali X)$ for some $X \in \Xc$ or
\item $F_\Omega(y) = F_{\Omega}(x)$.
\end{enumerate}
\end{definition}

\begin{remark} We remark that similar boundary quotients are considered by Choi~\cite{Choi_book} and Weisman~\cite{W2020}. 

\end{remark}

We need one more piece of notation to state our main result: the \emph{limit set} of a subgroup $G \leq \Aut(\Omega)$ is 
$$
\limset(G):=\partial \Omega \cap \bigcup_{p \in \Omega} \overline{G \cdot p}
$$
(unlike real hyperbolic geometry the accumulation points of an orbit may depend on the base point).

\begin{theorem}\label{thm:main} Suppose that $(\Omega, \Cc, \Gamma)$ is a naive convex co-compact triple. Then the following are equivalent: 
\begin{enumerate}
\item If $\Gamma$ is relatively hyperbolic with respect $\Pc=\{P_1, \ldots, P_m\}$ and $X_j$ is the closed convex hull of  $ \limset(P_j) \cap \partiali \Cc$ in $\Omega$, then 
 $$
 \Xc := \Gamma \cdot\{X_1,\dots, X_m\}
 $$ 
 is a peripheral family of $(\Omega, \Cc, \Gamma)$.
\item If $\Xc$ is a peripheral family of $(\Omega, \Cc, \Gamma)$ and $\Pc:=\{P_1, \ldots, P_m\}$ is a set of representatives of the $\Gamma$-conjugacy classes in $\{ \Stab_{\Gamma}(X) : X \in \Xc\}$, then $\Gamma$ is relatively hyperbolic with respect $\Pc$. 
\end{enumerate}
Moreover, when either of the above conditions are satisfied, then:
\begin{enumerate}[(a)]
\item $(\Cc, \dist_\Omega)$ is relatively hyperbolic with respect to $\Xc=\Gamma \cdot \{X_1, \dots, X_m\}$. 
\item There is a $\Gamma$-equivariant homeomorphism between the Bowditch boundary $\partial(\Gamma,\Pc)$ and $\qcc$.
\item Each $(\Omega, X_j, P_j)$ is a naive convex co-compact triple.
\item There exists $L > 0$ such that: if $x \in \partiali\Cc$ and $\diam_{F_\Omega(x)} \left( F_\Omega(x) \cap \partiali\Cc \right)\geq L$, then $x \in F_\Omega(\partiali X)$ for some $X \in \Xc$. 
\item There exists $R > 0$ such that: If $X \in \Xc$ and $x \in \partiali X$, then 
\begin{align*}
\dist_{F_\Omega(x)}^{\Haus}\left( F_\Omega(x) \cap \partiali X, F_\Omega(x) \cap \partiali\Cc\right) \leq R. 
\end{align*}
\end{enumerate} 
\end{theorem}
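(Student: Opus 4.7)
The plan is to use the \v{S}varc--Milnor identification $\Gamma \sim_{QI} (\Cc, \dist_\Omega)$ to move between the group-theoretic relative hyperbolicity of $\Gamma$ and the metric relative hyperbolicity of $(\Cc, \dist_\Omega)$ with respect to $\Xc$ in the sense of Drutu--Sapir (asymptotically tree-graded structure). In this framework both (1) and (2) reduce to verifying that $\Xc$ is the correct collection of metric peripheral subsets, and (a) is obtained as an intermediate output. Under either hypothesis the remaining consequences (b)--(e) then follow by exploiting the rel-hyp dynamics of $\Gamma$ on $\overline\Omega$ together with the Hilbert geometry of the convex sets $X \in \Xc$.

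\textbf{The two directions and (a).} For (1): assuming $\Gamma$ is relatively hyperbolic with peripherals $P_j$, distortion control for peripheral cosets together with \v{S}varc--Milnor show that each $P_j$-orbit in $\Cc$ is coarsely dense in $X_j$ and that $\Stab_\Gamma(X_j)$ is commensurable with $P_j$ (simultaneously yielding (c)). Strong isolation of $\Xc$ is then a reformulation of the bounded coset penetration property through the quasi-isometry, and coarse simplex containment uses the fact that any properly embedded simplex of dimension $\geq 2$ carries a quasi-isometric $\Zb^n$-action by projective dilations, producing a higher-rank quasi-flat in $\Gamma$ which must lie at bounded distance from a peripheral coset. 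For (2): strong isolation supplies the coset-isolation portion of asymptotic tree-gradedness, and coarse simplex containment rules out any extraneous flat pieces appearing in asymptotic cones of $\Cc$; combined, these yield (a), and then \v{S}varc--Milnor transports rel-hyp of $(\Cc, \dist_\Omega)$ with respect to $\Xc$ into rel-hyp of $\Gamma$ with respect to the stabilizers, which is (2).

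\textbf{Consequences (b), (d), (e) and main obstacle.} For (b), every point of $\partiali\Cc$ is either conical for $\Gamma$ or lies in some $F_\Omega(\partiali X)$; mapping the first to itself and the second to the parabolic fixed point for $\Stab_\Gamma(X)$ descends to $\qcc$ by the design of Definition~\ref{defn:boundary quotient}, and a compactness argument promotes this equivariant continuous bijection to a homeomorphism. For (d), whenever $\diam_{F_\Omega(x)}(F_\Omega(x)\cap\partiali\Cc) \geq L$ with $L$ large, the convex hull of a suitably generic finite configuration inside $F_\Omega(x) \cap \partiali \Cc$ with an interior point of $\Cc$ produces a properly embedded simplex of dimension $\geq 2$, and coarse simplex containment then forces $x \in F_\Omega(\partiali X)$ for some $X \in \Xc$. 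For (e), the cocompact action of $\Stab_\Gamma(X)$ on $X$ together with the face structure of $\partiali X$ controls $F_\Omega(x) \cap \partiali\Cc$ inside a uniform Hausdorff neighborhood of $F_\Omega(x) \cap \partiali X$. The most delicate step will be verifying asymptotic tree-gradedness in the Hilbert setting for the implication (2) $\Rightarrow$ (1): the Hilbert metric can be subtle near non-strictly-convex boundary points (faces collapse, Gromov products depend nontrivially on boundary regularity), so the argument requires careful compactness along divergent sequences, using strong isolation to separate pieces in the asymptotic cone and coarse simplex containment to argue the complement is Gromov hyperbolic. A secondary difficulty is that peripherals are allowed to be neither hyperbolic nor virtually abelian, so the pieces $X_j$ may have intrinsically rich Hilbert geometry, precluding the shortcuts used in \cite{IZ2019b} and \cite{W2020}.
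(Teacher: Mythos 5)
There is a genuine gap, and it sits exactly at the hard direction (2) $\Rightarrow$ (1). You propose to verify directly that the asymptotic cones of $(\Cc,\dist_\Omega)$ are tree-graded with respect to the ultralimits of $\Xc$, arguing that strong isolation gives the isolation of pieces and that coarse simplex containment ``rules out extraneous flat pieces.'' But tree-gradedness is not equivalent to ``isolated pieces plus no stray flats'': one must control \emph{all} simple geodesic triangles (indeed all non-tree-like behavior) in every cone, not just quasi-flats, and you give no mechanism for doing this in Hilbert geometry --- this is precisely the difficulty the paper is organized to avoid. The paper never proves tree-gradedness directly in this direction. Instead it routes through Yaman's criterion (Theorem~\ref{thm:yaman_criteria}): it shows the quotient $\qcc$ is compact metrizable, that $\Gamma$ acts on it as a convergence group (via limits of automorphisms in $\Pb(\End(\Rb^d))$, Propositions~\ref{prop:dynamics_of_automorphisms_1} and~\ref{prop:convergence_grp}), that points outside $B$ are conical (Proposition~\ref{prop:conical_limit_pts}, which needs the quantitative separation Conditions (1)--(2) supplied by Theorem~\ref{thm:boundary_faces}), and that the points $[F_\Omega(\partiali X)\cap\partiali\Cc]$ are \emph{bounded} parabolic --- the latter is the closest-subset projection argument of Theorem~\ref{thm:co_compact}, which your sketch omits entirely. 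Note also that in the paper statement (a) is \emph{not} an intermediate output of this direction: it is deduced afterwards, by applying Proposition~\ref{prop:rh_implies_isolation} to the group-theoretic relative hyperbolicity that Yaman's theorem produces; and (b) then comes for free from Yaman rather than from the ad hoc bijection-plus-compactness argument you sketch, which would itself require re-proving a Tukia/Yaman-type identification of Bowditch boundary points.

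A second, concrete error is in your argument for (d): taking the convex hull of points of $F_\Omega(x)\cap\partiali\Cc$ together with an interior point $q\in\Cc$ does \emph{not} produce a properly embedded simplex --- such a hull has the vertex $q$ inside $\Omega$, so its boundary does not lie in $\partial\Omega$, and more fundamentally the implication ``segment in $\partiali\Cc$ $\Rightarrow$ properly embedded simplex in $\Cc$'' is false for naive convex co-compact triples (the paper points to counterexamples in~\cite[Section 2.3]{IZ2019b}; the implication holds only in the convex co-compact case~\cite{DGF2017}). This is why the paper needs the quantitative Proposition~\ref{prop:finding_simplices}: only segments that are \emph{long relative to $\dist_F$} yield, after rescaling by group elements and passing to a limit in which the midpoint stays uniformly far from the sides $(a,q]\cup[q,b)$, a genuine properly embedded simplex with all three vertices ideal. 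Without this quantitative input, your arguments for (d), (e), and the Condition (2) needed for conical limit points all collapse, since Theorem~\ref{thm:boundary_faces} parts (4)--(8) rest on it. Two smaller inaccuracies: a properly embedded simplex in $\Cc$ need not carry any ``$\Zb^n$-action by projective dilations'' inside $\Aut(\Omega)$ --- the correct (and sufficient) fact is the purely metric Proposition~\ref{prop:simplex_quasi_isometric_to_R}, combined with Dru{\c t}u--Sapir's quasi-flat theorem; and in direction (1) the comparison of $X_j$ with the orbit hull $\hat X_j$ requires the Hausdorff-distance estimate on boundary faces (Theorem~\ref{thm:boundary_faces} part (5) via Proposition~\ref{prop:Hausdorff_distance_between_ch}), not just ``distortion control'' of cosets.
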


\begin{remark} Properties (d) and (e) are somewhat technical. Informally, property (d) states that any boundary face of $\Omega$ that $\partiali\Cc$ intersects in a ``large set'' must intersect the ideal boundary of an element in $\Xc$. Property (e) informally states that if $X \in \Xc$, then $\partiali X$ coarsely contains any boundary face of $\partiali\Cc$ that it intersects. 
\end{remark}

The equivalence part of Theorem~\ref{thm:main} can be viewed as a real projective analogue of results of Hruska--Kleiner and Hindawi--Hruska--Kleiner~\cite{HK2005,HK2009} in the setting of ${\rm CAT}(0)$-geometry. This earlier work motivated the results in this paper, but the methods of proof are very different. We should also note that an old result of Kelly-Strauss~\cite{KS1958} says that a Hilbert geometry $(\Omega, \dist_\Omega)$ is ${\rm CAT}(0)$ if and only if it is isometric to real hyperbolic $(d-1)$-space (in which case $\Omega$ coincides, up to a change of coordinates, with the Beltrami-Klein model of real hyperbolic $(d-1)$-space).

As mentioned above, in previous work~\cite{IZ2019b} we proved a version of Theorem~\ref{thm:main} in the special case when $\Xc$ consisted of properly embedded simplices of dimension at least two and the subgroups in $\Pc$ were virtually Abelian of rank at least two. In this case, using the simple structure of simplices and Abelian subgroups in naive convex co-compact groups (see~\cite{IZ2019}), one can weaken the strongly isolated assumption to only assuming that $\Xc$ is closed and discrete. In fact, a substantial portion of this earlier work was building a strongly isolated collection of properly embedded simplices from a closed and discrete collection. It seems unlikely to us that such a weakening is possible in the general case.

\subsection{Convex co-compact groups} As mentioned above, the class of naive convex co-compact groups includes the more restrictive, but still interesting, class of convex co-compact groups. For this class of groups, Theorem~\ref{thm:main} can be restated in a much simpler way.

Given a properly convex domain $\Omega \subset \Pb(\Rb^d)$ and a discrete subgroup $\Gamma \leq \Aut(\Omega)$, let $\core(\Gamma) \subset \Omega$ denote the closed convex hull of $\limset(\Gamma)$ in $\Omega$. 

\begin{definition}
\label{defn:cc}
A discrete subgroup $\Gamma \leq \Aut(\Omega)$ is called \emph{convex co-compact} if $\core(\Gamma)$ is non-empty and the quotient $\Gamma \backslash \core(\Gamma)$ is compact. 
\end{definition}

Every convex co-compact subgroup is clearly naive convex co-compact, but the converse is not true (see~\cite[Section 2.6]{IZ2019b} for some examples). One key difference between the two definitions is that if $\Gamma \leq \Aut(\Omega)$ is convex co-compact, then any open boundary face of $\Omega$ which intersects $\partiali \core(\Gamma)$ is actually contained in $\partiali \core(\Gamma)$. On the other hand, if $(\Omega, \Cc, \Gamma)$ is a naive convex co-compact triple, then it is possible for $\partiali\Cc$ to intersect a boundary face of $\Omega$ in a small set. This seemingly small difference makes convex co-compact groups much easier to study. 

For convex co-compact groups, the ``moreover'' part of Theorem~\ref{thm:main} can be restated and expanded as follows. 

\begin{theorem}[see Section \ref{sec:cc}]
\label{thm:main_cc}
Suppose that $\Omega \subset \Pb(\Rb^d)$ is a properly convex domain and $\Gamma \leq \Aut(\Omega)$ is a convex co-compact subgroup. If $\Gamma$ is relatively hyperbolic with respect to $\Pc=\{P_1,\dots, P_m\}$, then:
\begin{enumerate}[(a)]
\item $(\core(\Gamma), \dist_\Omega)$ is relatively hyperbolic with respect to 
$$
\Xc:=\Gamma \cdot \{\core(P_1),\dots, \core(P_m)\}.
$$
\item Let $[\partiali \core(\Gamma)]_{\Pc}$ denote the quotient of $\partiali \core(\Gamma)$ obtained by collapsing each limit set $\limset(\gamma P_j \gamma^{-1})$ to a point (where $\gamma \in \Gamma$ and $P_j \in\Pc$). Then there is a $\Gamma$-equivariant homeomorphism  between the Bowditch boundary $\partial(\Gamma,\Pc)$ and $[\partiali\core(\Gamma)]_{\Pc}$. 

\item Each $P_j$ is a convex co-compact subgroup of $\Aut(\Omega)$.
\item If $x \in \partiali\core(\Gamma)$ is not a $\Cc^1$-smooth point of $\Omega$ (i.e. $\Omega$ does not have a unique supporting hyperplane at $x$), then $x \in \Lc_\Omega(\gamma P_j\gamma^{-1})$ for some $\gamma \in \Gamma$ and $P_j \in\Pc$.
\item If $\ell \subset \partiali\core(\Gamma)$ is a non-trivial line segment, then $\ell \subset\Lc_\Omega( \gamma P_j \gamma^{-1})$ for some $\gamma \in \Gamma$ and $P_j \in\Pc$.
\end{enumerate}
\end{theorem}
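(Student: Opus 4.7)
The plan is to deduce Theorem~\ref{thm:main_cc} from Theorem~\ref{thm:main} applied to the naive convex co-compact triple $(\Omega, \core(\Gamma), \Gamma)$. The first step is a bookkeeping identification of the peripheral family. Since each $P_j$ is a subgroup of $\Gamma$, we have $\limset(P_j) \subset \limset(\Gamma) = \partiali\core(\Gamma)$, so $\limset(P_j) \cap \partiali\core(\Gamma) = \limset(P_j)$ and the convex hull $X_j$ produced by Theorem~\ref{thm:main}(1) is precisely $\core(P_j)$. Therefore the peripheral family supplied by Theorem~\ref{thm:main} is $\Xc = \Gamma \cdot \{\core(P_1), \dots, \core(P_m)\}$, and conclusions (a) and (c) follow immediately: (a) is Theorem~\ref{thm:main}(a), and (c) is Theorem~\ref{thm:main}(c), which asserts that $(\Omega, \core(P_j), P_j)$ is a naive convex co-compact triple; by Definition~\ref{defn:cc} this is exactly convex co-compactness of $P_j$.

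Next, I would derive (e) and (d), the technical heart of the statement. For (e), let $\ell \subset \partiali\core(\Gamma)$ be a nontrivial segment and let $z$ be a point in its relative interior. Then $F_\Omega(z) \cap \partiali\core(\Gamma)$ contains an open subsegment of $\ell$; using co-compactness of $\Gamma$ on $\core(\Gamma)$, translate by $\gamma_n \in \Gamma$ so that $\gamma_n z$ stays in a compact set, and apply Theorem~\ref{thm:main}(d) together with the strong isolation of $\Xc$ to obtain $X \in \Xc$ with $z \in F_\Omega(\partiali X)$. Convex co-compactness of $\Gamma$ implies that any open face of $\Omega$ meeting $\partiali\core(\Gamma)$ is contained in $\partiali\core(\Gamma)$, and, via (c), the analogous property holds for each $X = \core(\gamma P_j\gamma^{-1})$, giving $F_\Omega(\partiali X) = \partiali X = \limset(\gamma P_j\gamma^{-1})$; a connectedness argument along $\ell$ then places the entire segment inside one such limit set. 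For (d), a non-$\Cc^1$-smooth point $x \in \partiali\core(\Gamma)$ dualizes to a boundary point of $\Omega^*$ supporting a nontrivial face; convex co-compactness of $\Gamma$ on $\Omega^*$ and Theorem~\ref{thm:main}(d),(e) applied to the dual triple force $x$ to lie in some $F_\Omega(\partiali X)$, hence in $\limset(\gamma P_j\gamma^{-1})$. Part (b) then follows from Theorem~\ref{thm:main}(b) upon checking that the quotient relation defining $\qcc$ (Definition~\ref{defn:boundary quotient}) coincides with the relation defining $[\partiali\core(\Gamma)]_{\Pc}$: the identification $F_\Omega(\partiali X) = \limset(\gamma P_j\gamma^{-1})$ handles condition (1), while (e) handles condition (2), by showing that whenever $F_\Omega(x) = F_\Omega(y)$ is nontrivial, both points already share a common peripheral limit set.

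The main obstacle is bridging the quantitative, ``coarse'' conclusions of Theorem~\ref{thm:main}(d),(e) -- which require a threshold-sized face intersection or a uniform $R$-neighborhood -- with the purely local hypotheses of Theorem~\ref{thm:main_cc}(d),(e), which impose no size restrictions. The co-compactness of the $\Gamma$-action on $\core(\Gamma)$ is the essential tool: by translating any local configuration into a fixed fundamental domain, one converts a ``nontrivial'' feature into a uniformly large one. I expect (d) to be the subtlest step, since non-$\Cc^1$-smoothness does not produce anything in $\partiali\core(\Gamma)$ directly and forces one to work projectively dually, verifying that the dual picture still fits into the framework of Theorem~\ref{thm:main}.
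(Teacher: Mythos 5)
Most of your proposal tracks the paper's own argument. The identification $\limset(P_j) \cap \partiali\core(\Gamma) = \limset(P_j)$, hence $X_j = \core(P_j)$, gives (a) and (c) directly from Theorem~\ref{thm:main}, exactly as in the paper, and your derivation of (b) from (e) together with the identification $F_\Omega(\partiali X) = \partiali X = \limset(\gamma P_j \gamma^{-1})$ is precisely the paper's Equation-level argument. In (e) your translation step is superfluous: once you know (Observation~\ref{obs: contains faces}) that any open face meeting $\partiali\core(\Gamma)$ is entirely contained in it, the face $F_\Omega(z)$ of an interior point $z$ of $\ell$ satisfies $\diam_{F_\Omega(z)}\left(F_\Omega(z) \cap \partiali\core(\Gamma)\right) = \diam_{F_\Omega(z)} F_\Omega(z) = \infty$, so the threshold hypothesis of Theorem~\ref{thm:main}(d) holds trivially, with no need to translate by $\gamma_n$ or invoke strong isolation; likewise no connectedness argument along $\ell$ is needed, since $\relint(\ell)$ lies in the single face $F_\Omega(z) \subset \partiali X = \limset(\gamma P_j\gamma^{-1})$ and the limit set is closed.

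The genuine gap is part (d). Your duality sketch is unsupported at every step within this paper's framework: (i) you assert that $\Gamma$ acts convex co-compactly on the dual domain $\Omega^*$ --- this is true by external work (Danciger--Gu\'eritaud--Kassel), but it is neither proven nor cited here and is not a formal consequence of Theorem~\ref{thm:main}; (ii) you need the segment of supporting hyperplanes at $x$ to lie in $\partiali\core_{\Omega^*}(\Gamma)$, which requires identifying the dual limit set with the supporting hyperplanes at primal limit points --- another missing duality lemma; (iii) even granting that this dual segment lies in $\Lc_{\Omega^*}(\gamma P_j \gamma^{-1})$, the transfer back to $x \in \Lc_\Omega(\gamma P_j \gamma^{-1})$ is not immediate, since a hyperplane in the dual limit set supports $\Omega$ at \emph{some} point of the primal limit set, not a priori at $x$. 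The paper avoids duality entirely: it quotes Lemma~\ref{lem:scaling_near_non_C1_point} (Lemma 15.5 of [IZ2019b]), which manufactures properly embedded simplices of dimension at least two near any non-$\Cc^1$-smooth point along the ray $(x,q]$; the coarse-simplex-containment constant $D_2$ converts these into elements of $\Xc$, strong isolation plus a chaining argument along points $p_n \rightarrow x$ with $\dist_\Omega(p_n, p_{n+1}) \leq 1$ pins down a single $X \in \Xc$ with $(x,q'] \subset \Nc_\Omega(X, D_2+1)$, and Proposition~\ref{prop:asymp_sequences} then yields $x \in \partiali X = \limset(\gamma P_j \gamma^{-1})$. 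Without either this simplex-production mechanism or a fully developed duality theory, your argument for (d) does not go through.
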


We should note that the main new content of Theorem~\ref{thm:main_cc} is part (c). In particular: 
\begin{enumerate}
\item part (a) is an immediate consequence of part (c), 
\item in~\cite{IZ2019b} we previously proved parts (d) and (e) in the case where each $P_j$ is virtually Abelian with rank at least two and once part (c) is known the same argument works in the more general setting, and
\item Weisman~\cite{W2020} established parts (b) and (e) with part (c) as an assumption.
\end{enumerate}

In the context of convex co-compact groups, peripheral families can also be defined in terms of their boundary behavior. 

\begin{proposition}[see Section~\ref{sec:cc}]\label{prop: peripheral family in cc case} Suppose that $\Omega \subset \Pb(\Rb^d)$ is a properly convex domain and $\Gamma \leq \Aut(\Omega)$ is a convex co-compact subgroup. If $\Xc$ is a $\Gamma$-invariant collection of closed unbounded convex subsets of $\Omega$, then the following are equivalent: 
\begin{enumerate}
\item $\Xc$ is a peripheral family of $(\Omega, \core(\Gamma), \Gamma)$.
\item $\Xc$ has the following properties: 
\begin{enumerate}
\item $\Xc$ is closed in the local Hausdorff topology induced by the Hilbert metric $\dist_\Omega$. 
\item If $X_1, X_2 \in \Xc$ are distinct, then $\partiali X_1 \cap \partiali X_2 = \emptyset$. 
\item If $\ell \subset \partiali\core(\Gamma)$ is a non-trivial line segment, then $\ell \subset \partiali X$ for some $X \in \Xc$. 
\end{enumerate} 
\end{enumerate}
Moreover, when the above conditions are satisfied, then 
$$
[\partiali \core(\Gamma)]_{\Pc} = [\partiali \core(\Gamma)]_{\Xc}
$$
where $\Pc$ is a set of representatives of the $\Gamma$-conjugacy classes in $\{ \Stab_{\Gamma}(X) : X \in \Xc\}$.
\end{proposition}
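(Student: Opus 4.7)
The plan is to establish the two implications of the equivalence separately using Theorem~\ref{thm:main} and Theorem~\ref{thm:main_cc}, and then to verify the moreover claim by a direct comparison of equivalence classes.

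For \textbf{(1) $\Rightarrow$ (2)}, assume $\Xc$ is a peripheral family of $(\Omega, \core(\Gamma), \Gamma)$. Then Theorem~\ref{thm:main} yields that $\Gamma$ is relatively hyperbolic with respect to $\Pc = \{\Stab_\Gamma(X_1),\ldots,\Stab_\Gamma(X_m)\}$, and Theorem~\ref{thm:main_cc}(c) gives that each $P_j$ is itself convex co-compact. Hence the set $X_j$ appearing in Theorem~\ref{thm:main}(1), being the convex hull of $\limset(P_j) \cap \partiali\core(\Gamma) = \limset(P_j)$, coincides with $\core(P_j)$, so $\partiali X_j = \limset(P_j)$. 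Condition (c) of the proposition is then Theorem~\ref{thm:main_cc}(e) verbatim. For (b), distinct peripheral conjugates in a relatively hyperbolic group have disjoint parabolic fixed points in the Bowditch boundary; via the homeomorphism of Theorem~\ref{thm:main_cc}(b) this disjointness transfers to $\partial\Omega$, yielding $\partiali X_1 \cap \partiali X_2 = \emptyset$ whenever $X_1 \ne X_2$ in $\Xc$. For (a), suppose $X_n \to X_\infty$ in the local Hausdorff topology with $X_n \in \Xc$. If $X_n$ were not eventually constant, pass to a subsequence of distinct elements; the unboundedness of the limit together with local Hausdorff convergence would produce, for large $n, m$, points in $\Nc_\Omega(X_n,1) \cap \Nc_\Omega(X_m,1)$ at arbitrarily large Hilbert distance apart, contradicting strong isolation. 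Hence $X_n$ is eventually constant and $X_\infty \in \Xc$.

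For \textbf{(2) $\Rightarrow$ (1)}, $\Gamma$-invariance is given. For the coarse containment of properly embedded simplices, let $S \subset \core(\Gamma)$ be a properly embedded simplex of dimension $k \geq 2$. Each $1$-face of $S$ is a non-trivial line segment in $\partiali\core(\Gamma)$, so by (c) it lies in $\partiali X$ for some $X \in \Xc$; the edge graph of $S$ is connected (since $k \geq 2$), and (b) forbids two edges that share a vertex from lying in different members of $\Xc$, so all edges lie in a single $\partiali X$. Because $\overline{X}$ is closed and convex and contains the extreme set of $\overline{S}$, it contains $\overline S$, giving $S \subset X$ (so even $D_2 = 0$ works). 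The main obstacle is \emph{strong isolation}, which I will prove by contradiction. If it fails at some scale $r > 0$, there exist distinct $X_n, Y_n \in \Xc$ and points $p_n, q_n \in \Nc_\Omega(X_n, r) \cap \Nc_\Omega(Y_n, r)$ with $\dist_\Omega(p_n, q_n) \to \infty$. Using co-compactness of $\Gamma \curvearrowright \core(\Gamma)$, translate by $\gamma_n \in \Gamma$ so that the midpoints of the geodesic segments $[p_n, q_n]$ lie in a fixed compact set; pass to subsequences, extracting local Hausdorff limits $X_n \to X_\infty$ and $Y_n \to Y_\infty$ with $X_\infty, Y_\infty \in \Xc$ by (a), while $[p_n, q_n]$ converges on compacts to a bi-infinite Hilbert geodesic $L$ whose endpoints $\ell^\pm \in \partial\Omega$ lie in $F_\Omega(\partiali X_\infty) \cap F_\Omega(\partiali Y_\infty)$. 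If $X_\infty \neq Y_\infty$, then the non-trivial segment $[\ell^+, \ell^-] \subset \partiali\core(\Gamma)$ (and its translates in the relevant faces) combined with (c) produces a single $X \in \Xc$ whose $\partiali X$ meets both $\partiali X_\infty$ and $\partiali Y_\infty$, contradicting (b). If $X_\infty = Y_\infty$, a diagonal argument using (a) and (b) gives local discreteness of $\Xc$ near $X_\infty$, contradicting $X_n \neq Y_n$ in every neighborhood.

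For the \textbf{moreover} statement, once the equivalence is in place each $P_j$ is convex co-compact with $\limset(P_j) = \partiali X_j = \partiali\core(P_j)$. In the convex co-compact setting, every open face of $\overline\Omega$ that meets $\partiali\core(\Gamma)$ is entirely contained in $\partiali\core(\Gamma)$, and by Theorem~\ref{thm:main_cc}(e) such a face lies in some $\limset(\gamma P_j \gamma^{-1}) = \partiali(\gamma X_j)$. The equivalence relation defining $[\partiali\core(\Gamma)]_{\Pc}$ (collapse each $\limset(\gamma P_j\gamma^{-1})$) and the relation defining the quotient $[\partiali\core(\Gamma)]_{\Xc}$ (identifying points that share $F_\Omega(\partiali X)$ or share $F_\Omega$) then coincide class-by-class, yielding the claimed equality.
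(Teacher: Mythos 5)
Your proposal has a genuine gap in each direction, both traceable to the same unjustified identification. In (1) $\Rightarrow$ (2) you treat the given $X_j \in \Xc$ as if it were the set $\CH_\Omega(\limset(P_j) \cap \partiali\core(\Gamma)) \cap \Omega = \core(P_j)$ appearing in Theorem~\ref{thm:main}(1), but nothing forces this: an arbitrary peripheral family element $X$ is only coarsely equivalent to $\core(\Stab_\Gamma(X))$ (a convex thickening of $\core(P_j)$ inside $\Omega$ still yields a peripheral family), and even the weaker identity $\partiali X = \Lc_\Omega(\Stab_\Gamma(X))$ --- which your derivations of (b), of (c) ``verbatim from Theorem~\ref{thm:main_cc}(e)'', and of the moreover statement all silently use --- is precisely the content of the paper's closing lemma, which is proved there \emph{from} properties (a), (b), (c). Invoking it to establish (b) and (c) is therefore circular. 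The paper needs none of this machinery for this direction: (b) is a two-line consequence of strong isolation (if $x \in \partiali X_1 \cap \partiali X_2$, the rays $(x,q_1]$ and $(x,q_2]$ stay at bounded Hausdorff distance by Proposition~\ref{prop:Hausdorff_distance_between_lines}, forcing $X_1 = X_2$), (a) follows from Lemma~\ref{lem:finitely_many_simplices_in_compact_set}, and (c) follows from Theorem~\ref{thm:boundary_faces} parts (4) and (5) combined with Observation~\ref{obs: contains faces} and an extreme-point argument showing the whole face $F_\Omega(x)$ lies in $\partiali X$; the moreover part is then proved, not asserted, as the statement $\partiali X = F_\Omega(\partiali X) = \Lc_\Omega(\Stab_\Gamma(X))$.

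In (2) $\Rightarrow$ (1) the crux is strong isolation, and your handling of the case $X_\infty = Y_\infty$ --- ``a diagonal argument using (a) and (b) gives local discreteness'' --- is not a proof. Closedness in the local Hausdorff topology together with disjointness of ideal boundaries does not formally exclude a sequence of pairwise distinct elements of $\Xc$ converging to $X_\infty$ (think of distinct axes limiting onto an axis); discreteness is the most delicate step of the paper's argument (Lemma~\ref{lem:discrete in endgame}) and requires a second renormalization at infinity: one picks $q_n \in [p_n, x)$ with $\dist_\Omega(q_n, X_n) = 1$, translates by new group elements $\gamma_n$ so that $\{\gamma_n(q_n)\}$ is relatively compact, extracts limits $X_\infty, Y_\infty \in \Xc$ with $q_\infty \in X_\infty$ but $\dist_\Omega(q_\infty, Y_\infty) = 1$, and then contradicts the disjoint-faces lemma (Lemma~\ref{lem : disjoint faces in endgame}) using Propositions~\ref{prop:asymp_sequences} and~\ref{prop:Hausdorff_distance_between_lines}. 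This uses the cocompact $\Gamma$-action and convexity in an essential way your sketch omits. Two smaller slips in the same paragraph: the segment $[\ell^+, \ell^-]$ is \emph{not} contained in $\partiali\core(\Gamma)$, since $(\ell^-, \ell^+) \subset \Omega$ --- the segments to which (c) applies are of the form $[x_1, x_2] \subset \partial\Omega$ with $x_1 \in \partiali X_\infty$, $x_2 \in \partiali Y_\infty$ sharing a face; and the paper translates an \emph{endpoint} of a maximal segment in the convex set $\Nc_\Omega(X_n, r) \cap \Nc_\Omega(Y_n, r)$ (after first showing that set is bounded) rather than a midpoint, which is what makes the limiting configuration land cleanly in $\partiali\core(\Gamma)$. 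Your (1) $\Rightarrow$ (2) proof of (a) and your simplex argument in (2) $\Rightarrow$ (1) are essentially correct and match the paper.
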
  

Previously, Weisman~\cite[Theorem 1.16]{W2020} characterized when a convex co-compact subgroup is hyperbolic relative to a collection of convex co-compact subgroups in terms of the behavior of the limit sets of the subgroups. We note that combining Proposition~\ref{prop: peripheral family in cc case} and Theorem~\ref{thm:main} provides a ``subset''  version of this characterization.

\subsection{The case of Gromov hyperbolic groups} A word hyperbolic group is relatively hyperbolic with respect to the empty set. Thus if $(\Omega,\Cc,\Gamma)$ is a naive convex co-compact triple and $\Gamma$ is a  word hyperbolic group, then Theorem \ref{thm:main} holds with $\Xc=\emptyset$. 

However, the choice of this peripheral family $\Xc$ is not canonical even for a convex co-compact group $\Gamma$ (unlike our work in \cite{IZ2019b} where $\Xc$ was the set of all maximal properly embedded simplices in $\core(\Gamma)$ of dimension at least two). For instance, suppose that $\Gamma:=\ip{a, b}$ is a convex co-compact  subgroup of $\mathrm{PSO}(2,1)$ that is isomorphic to a free group on two generators. Then $\Gamma$ is relatively hyperbolic with respect to $\Pc:=\{\langle g \rangle \}$ where $g:=aba^{-1}b^{-1}$. Then $\Xc:=\Gamma \cdot (g^+,g^-)$ where $(g^+,g^-)$ is the unique $g$-invariant projective line in the Beltrami-Klein model of $\Hb^2$. On the other hand, we can also choose $\Xc=\emptyset$ (when $\Pc=\emptyset$).

\subsection{Outline of the paper and proofs} 

Sections~\ref{sec: preliminaries} and~\ref{sec:rel hyp groups} are expository. In Section~\ref{sec: preliminaries} we recall the basic definitions and results about properly convex domains that we will require and in Section~\ref{sec:rel hyp groups} we recall some properties of relatively hyperbolic spaces and groups. 

Sections~\ref{sec:finding properly embedded simplices} through~\ref{sec:proof_main} are devoted to the proof of Theorem~\ref{thm:main}. The difficult direction of the equivalence is showing that (2) implies (1). Our general strategy is based on combining ideas from~\cite{IZ2019b} and~\cite{W2020}. In particular, as in~\cite{W2020} we will use Yaman's characterization of relatively hyperbolicity to show that $\Gamma$ is relatively hyperbolic. We will verify the conditions in Yaman's theorem by further developing the ideas used in~\cite{IZ2019b} to study closed and discrete collections of properly embedded simplices.

We should also note that Choi~\cite{Choi_book} (which predates the work of Weisman) used Yaman's characterization to verify that the fundamental groups of certain non-compact convex real projective manifolds were relatively hyperbolic with respect to the fundamental groups of their ends.

The arguments used to verify the conditions of Yaman's theorem in this paper are somewhat similar to the analogous ones in~\cite{W2020, Choi_book}, but given the different setups, there doesn't seem to be an easy to way to reduce the proofs in this paper to any lemmas in the work of~\cite{W2020, Choi_book}.

The content of these sections is as follows:
\begin{enumerate}
\item In Section~\ref{sec:finding properly embedded simplices} we prove a quantitative result which informally states that long line segments in an open boundary face (relative to the Hilbert metric of the face) imply the existence of nearby properly embedded simplices. 
\item In Section~\ref{sec:properties of peripheral families} we establish a number of useful properties of peripheral families. 
\item In Section~\ref{sec:peripheral subgroups are ncc} we prove that  peripheral subgroups of a relatively hyperbolic naive convex co-compact subgroup are themselves naive convex co-compact. 
\item In Section~\ref{sec: co-compact boundary actions} we prove a technical result which will allow us to show that non-conical limit points are bounded parabolic points. A key tool here is a ``closest subset'' projection map.
\item In Section~\ref{sec:bdry_quotient} we prove some basic properties of the quotient space $\qcc$, including a sufficient condition for a point to be a conical limit point. 
\item In Section \ref{sec:proof_main} we put everything together and prove Theorem \ref{thm:main}.
\end{enumerate} 
Finally, in Section~\ref{sec:cc}, we use Theorem~\ref{thm:main} to prove Theorem~\ref{thm:main_cc}.

\subsection*{Acknowledgements} M. Islam thanks Louisiana State University for hospitality during a visit in March 2020 where work on this project started. 

M. Islam was partially supported by Emmy Noether Project 427903332 (funded by the DFG), DMS-1607260 (National Science Foundation), and Rackham Predoctoral Fellowship (University of Michigan). A. Zimmer was partially supported by grants DMS-2105580 and DMS-2104381 from the National Science Foundation.

\section{Preliminaries}\label{sec: preliminaries}

\subsection{Notation} If $V \subset \Rb^d$ is a non-zero linear subspace, we will let $\Pb(V) \subset \Pb(\Rb^d)$ denote its projectivization. In most other cases, we will use $[o]$ to denote the projective equivalence class of an object $o$, for instance: 
\begin{enumerate}
\item if $v \in \Rb^{d} \setminus \{0\}$, then $[v]$ denotes the image of $v$ in $\Pb(\Rb^{d})$, 
\item if $\phi \in \GL_{d}(\Rb)$, then $[\phi]$ denotes the image of $\phi$ in $\PGL_{d}(\Rb)$, and 
\item if $T \in \End(\Rb^{d}) \setminus\{0\}$, then $[T]$ denotes the image of $T$ in $\Pb(\End(\Rb^{d}))$. 
\end{enumerate}

We also standardize some metric notations. If $(X,\dist)$ is a metric space, $p\in X$, $A \subset X$, and $r>0$, then
\begin{enumerate}
\item $\Nc_X(A,r):=\{x \in X : \dist(x,A)<r\}$,
\item $\Bc_X(p,r):=\{x \in X : \dist(p,x)< r\}$, and 
\item $\diam_X(A):=\sup \{ \dist(x,x'): x, x' \in A\}.$
\end{enumerate}

\subsection{Convexity and the Hilbert metric}\label{sec: convexity and the Hilbert metric} A subset $C\subset \Pb(\Rb^d)$ is:
\begin{enumerate}
\item \emph{properly convex} if there exists an affine chart $\mathbb{A}$ of $\Pb(\Rb^d)$ where $C \subset \mathbb{A}$ is a bounded convex subset.
\item  a \emph{properly convex domain}  if $C$ is properly convex and open in $\Pb(\Rb^d)$. 
\end{enumerate}
Given a properly convex set $C \subset \Pb(\Rb^d)$ and a subset $X \subset \overline{C}$, we define its \emph{convex hull} as 
\begin{align*}
{\rm ConvHull}_C(X):= \cap \big\{ Y : Y  \text{ is a closed convex subset such that } X \subset Y \subset \overline{C}\big\}.
\end{align*}

A \emph{line segment} in $\Pb(\Rb^{d})$ is a connected subset of a projective line. Given two points $x,y \in \Pb(\Rb^{d})$ there is no canonical line segment with endpoints $x$ and $y$, but we will use the following convention: if $C \subset \Pb(\Rb^d)$ is a properly convex set and $x,y \in \overline{C}$, then (when the context is clear) we will let $[x,y]$ denote the closed line segment joining $x$ to $y$ which is contained in $\overline{C}$. In this case, we will also let $(x,y)=[x,y]\setminus\{x,y\}$, $[x,y)=[x,y]\setminus\{y\}$, and $(x,y]=[x,y]\setminus\{x\}$. 
 
Suppose that $\Omega \subset \Pb(\Rb^d)$ is a properly convex domain. If $x, y \in \Omega$ are distinct, let $[a,b]:=\Pb(\Span\{x,y\}) \cap \overline{\Omega}$ where $a$ and $b$ labelled such that $x \in [a,y]$ (i.e. the points are ordered $a, x, y, b$ along $[a,b]$). Then the \emph{the Hilbert distance} between $x$ and $y$ is defined to be
\begin{align*}
\hil(x,y) := \frac{1}{2}\log [a, x,y, b]
\end{align*}
 where 
 \begin{align*}
 [a,x,y,b] := \frac{\abs{x-b}\abs{y-a}}{\abs{x-a}\abs{y-b}}
 \end{align*}
 is the cross ratio (here $\abs{\cdot}$ is some (any) norm in some (any) affine chart which contains $a,x,y,b$). Then $(\Omega, \hil)$ is a complete geodesic metric space and $\Aut(\Omega)$ acts properly and by isometries on $\Omega$ (see for instance~\cite[Section 28]{BK1953}). Further, the projective line segment $[x,y]$ is a geodesic for the Hilbert distance.

\subsection{Boundaries and faces}

Suppose that $C \subset \Pb(\Rb^d)$ is a properly convex set. The \emph{relative interior of $C$}, denoted by $\relint(C)$, is  the interior of $C$ in $\Pb(\Spanset C)$. We will say that $C$ is  \emph{open in its span} if $C = \relint(C)$. 

The \emph{boundary of $C$} is $\partial C : = \overline{C} \setminus \relint(C)$, the \emph{ideal boundary of $C$} is $\partiali C := \partial C \setminus C$, and the \emph{non-ideal boundary of $C$} is $\partialn C := \partial C \cap C$.

\begin{definition}\label{defn:open_faces}
If $C \subset \Pb(\Rb^d)$ is a properly convex set which is open in its span  and $x \in \overline{C}$, let $F_C(x)$ denote the \emph{open face} of $x$, that is 
\begin{align*}
F_C(x) = \{ x\} \cup \left\{ y \in \overline{C} :  \exists \text{ an open line segment in } \overline{C} \text{ containing } x \text{ and }y \right\}.
\end{align*}
\end{definition}

Directly from the definitions we have the following. 

\begin{observation}\label{obs:faces} Suppose that $\Omega \subset \Pb(\Rb^d)$ is a properly convex domain. 
\begin{enumerate}
\item $F_\Omega(x) = \Omega$ when $x \in \Omega$,
\item $F_\Omega(x)$ is open in its span,
\item $y \in F_\Omega(x)$ if and only if $x \in F_\Omega(y)$ if and only if $F_\Omega(x) = F_\Omega(y)$,
\item if $x, y \in \overline{\Omega}$, $z \in (x,y)$, $p \in F_{\Omega}(x)$, and $q \in F_{\Omega}(y)$, then 
\begin{align*}
(p,q) \subset F_\Omega(z).
\end{align*}
In particular, $(p,q) \subset \Omega$ if and only if $(x,y) \subset \Omega$.
\end{enumerate}
\end{observation}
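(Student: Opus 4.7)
The plan is to prove the items in the order (1), the symmetry part of (3), (4), the ``$F_\Omega(x)=F_\Omega(y)$'' part of (3), and finally (2), with each step using the previous.

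Part (1) is immediate: for $x, y \in \Omega$, any open interval on the projective line through $x$ and $y$ that lies in $\Omega$ and contains both points serves as the required open segment. The symmetry $y \in F_\Omega(x) \Leftrightarrow x \in F_\Omega(y)$ is built into the definition, since ``$\overline{\Omega}$ contains an open segment passing through both points'' is a symmetric condition.

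The heart of the argument is (4), which I would prove by a 2-plane convex hull construction. Reducing to the case $r \ne z$, I would fix endpoints $a, b \in \overline{\Omega}$ such that both $x$ and $p$ lie in the relative interior of $[a, b]$ (setting $a = b = x$ if $p = x$, and otherwise using an open segment in $\overline{\Omega}$ witnessing $p \in F_\Omega(x)$), and similarly $c, d \in \overline{\Omega}$ for $y, q$. Convexity of $\overline{\Omega}$ gives $\CH\{a, b, c, d\} \subset \overline{\Omega}$. Because $x$ and $p$ lie strictly inside $[a,b]$ while $y$ and $q$ lie strictly inside $[c,d]$, a direct check shows that $z \in (x,y)$ and $r \in (p,q)$ both lie in the relative interior of this (at most 2-dimensional) convex body. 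The projective line through $z$ and $r$ therefore meets this body in a closed segment whose relative interior contains both points and lies in $\overline{\Omega}$, yielding the required open segment witnessing $r \in F_\Omega(z)$. Collinear sub-cases reduce to the same argument in dimension one. The ``in particular'' clause then follows: if $(x,y) \subset \Omega$, then (1) gives $F_\Omega(z) = \Omega$ so $(p,q) \subset \Omega$; conversely, if $(p,q) \subset \Omega$, applying (4) with the roles of $(x,y)$ and $(p,q)$ exchanged (permitted by the symmetry half of (3)) yields $(x,y) \subset F_\Omega(r) = \Omega$.

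The remaining half of (3) follows from the same 2-plane technique: given $y, z \in F_\Omega(x)$, the convex hull of the endpoints of witnessing open segments through $(x,y)$ and through $(x,z)$ lies in $\overline{\Omega}$, and its relative interior contains $y$ and $z$, producing an open segment in $\overline{\Omega}$ through $y$ and $z$. To deduce (2), I would first combine (3) and (4) to establish convexity of $F_\Omega(x)$: for $y_1, y_2 \in F_\Omega(x)$ and $w \in (y_1, y_2)$, applying (4) with $y_1, y_2$ in both the $(x, y)$ and $(p, q)$ slots gives $w \in F_\Omega(y_1) = F_\Omega(x)$. I would then invoke the standard convex-analytic criterion that a point $y$ of a convex set $F$ lies in its relative interior iff every segment $[y, z] \subset F$ can be extended past $y$ within $F$; this holds here because the witnessing open segment through $y$ and $z$ necessarily crosses $y$ strictly, giving the required extension inside $F_\Omega(x)$. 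The main technical obstacle is the case analysis for (4) when the witnessing segments degenerate or share endpoints, but the unified strategy of forming planar convex hulls of witnessing endpoints covers every configuration.
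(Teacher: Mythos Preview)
The paper does not actually prove this observation; it is introduced with the sentence ``Directly from the definitions we have the following'' and no argument is given. So there is nothing to compare your approach against, and your write-up already supplies far more detail than the authors do. The overall strategy---reduce to convex hulls of the endpoints of witnessing segments and read off the conclusion from relative-interior considerations---is the right one. There are, however, a few slips.

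In part~(1) you only verify $\Omega \subset F_\Omega(x)$. You should also check that no point of $\partial\Omega$ lies in $F_\Omega(x)$: for $y \in \partial\Omega$ the projective line through $x \in \Omega$ and $y$ meets $\overline{\Omega}$ in a closed segment with $y$ as an endpoint, so no open segment in $\overline{\Omega}$ can pass through $y$ on that line.

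In part~(4) the hull $\CH\{a,b,c,d\}$ is \emph{not} at most two-dimensional in general: if the witnessing segments for $p \in F_\Omega(x)$ and $q \in F_\Omega(y)$ are skew, this hull is a tetrahedron. (The ``planar'' and ``2-plane'' language in your sketch is only accurate for the part~(3) argument, where both witnessing segments pass through the common point $x$ and are therefore coplanar.) Fortunately your argument survives unchanged: what you actually need is that $z$ and $r$ lie in $\relint\CH\{a,b,c,d\}$, whatever its dimension, and this follows from the standard fact that a point strictly between a point of $\relint[a,b]$ and a point of $\relint[c,d]$ lies in $\relint\CH\{a,b,c,d\}$.

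Your deduction of convexity at the start of~(2) does not work as written. Applying~(4) with $(x,y)=(p,q)=(y_1,y_2)$ and $z=w$ gives $(y_1,y_2) \subset F_\Omega(w)$, not $w \in F_\Omega(y_1)$; the endpoints $y_1,y_2$ are not in the open segment. The clean fix bypasses~(4) entirely: by~(3) you already know $F_\Omega(y_1)=F_\Omega(y_2)$, so there is an open segment in $\overline{\Omega}$ through $y_1$ and $y_2$, and this same segment witnesses $w \in F_\Omega(y_1)=F_\Omega(x)$ for every $w \in (y_1,y_2)$. Your subsequent argument that every point of $F_\Omega(x)$ is relatively interior is correct.
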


If $B \subset C \subset \Pb(\Rb^d)$ are properly convex sets, then we say that $B$ is \emph{properly embedded} in $C$ if $B \hookrightarrow C$ is a proper map with respect to the subspace topology. Note that $B$ is properly embedded in $C$ if and only if $\partiali B \subset \partiali C$. 

\subsection{Limits of automorphisms} Every $T \in \Pb(\End(\Rb^d))$ induces a map 
\begin{align*}
\Pb(\Rb^d) \setminus \Pb(\ker T) \rightarrow \Pb(\Rb^d)
\end{align*}
defined by $x \mapsto T(x)$. We will frequently use the following observation.

\begin{observation}\label{obs:limits_of_maps} If $\{T_n\}$ is a sequence in  $\Pb(\End(\Rb^d))$ converging to $T \in \Pb(\End(\Rb^d))$, then 
\begin{align*}
T(x) = \lim_{n \rightarrow \infty} T_n(x)
\end{align*}
for all $x \in \Pb(\Rb^d) \setminus \Pb(\ker T)$. Moreover, the convergence is uniform on compact subsets of $ \Pb(\Rb^d) \setminus \Pb(\ker T)$. 
\end{observation}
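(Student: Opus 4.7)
\bigskip

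\noindent\textbf{Proof plan.} The plan is to lift the statement from projective space to the underlying vector/matrix spaces, where continuity of the evaluation map $(T,v) \mapsto Tv$ is trivial, and then descend again.

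First I would pick representative lifts. Fix any norm on $\End(\Rb^d)$ and let $\wt T \in \End(\Rb^d)$ be any norm-one representative of $T$. By choosing representatives in the sphere of unit-norm endomorphisms and passing to a subsequence if necessary, one obtains representatives $\wt T_n$ with $\|\wt T_n\|=1$ and $\wt T_n \to \pm \wt T$ in $\End(\Rb^d)$; after possibly negating, assume $\wt T_n \to \wt T$. (Ambiguity of sign is irrelevant after projectivizing.) Similarly, for $x \in \Pb(\Rb^d) \setminus \Pb(\ker T)$, choose a unit vector $v \in \Rb^d$ with $[v]=x$; by hypothesis $\wt T v \neq 0$.

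Next, by continuity of matrix-vector multiplication, $\wt T_n v \to \wt T v$ in $\Rb^d$, and since $\wt T v \neq 0$, for all sufficiently large $n$ we have $\wt T_n v \neq 0$. The projectivization map $\Rb^d \setminus \{0\} \to \Pb(\Rb^d)$ is continuous, so $T_n(x) = [\wt T_n v] \to [\wt T v] = T(x)$, which gives the pointwise statement.

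For the moreover clause, let $K \subset \Pb(\Rb^d) \setminus \Pb(\ker T)$ be compact. Choose a continuous section $s : K \to \Rb^d \setminus\{0\}$ of the projectivization (for example by normalizing to the unit sphere in any fixed linear chart containing $K$, or by a partition-of-unity argument; compactness of $K$ ensures a global continuous lift since the obstruction is the line bundle $\Oc(-1)$, which admits continuous sections over compact subsets missing a hyperplane). Then $\wt K := s(K)$ is compact in $\Rb^d \setminus \{0\}$, and the continuous function $v \mapsto \|\wt T v\|$ attains a positive minimum $c > 0$ on $\wt K$. Since $\wt T_n \to \wt T$ uniformly as operators, $\wt T_n v \to \wt T v$ uniformly in $v \in \wt K$, so $\|\wt T_n v\| \geq c/2$ for all large $n$ uniformly in $v \in \wt K$. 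Combined with uniform convergence of the numerators, this gives uniform convergence of $\wt T_n v / \|\wt T_n v\|$ to $\wt T v / \|\wt T v\|$ on $\wt K$, and hence uniform convergence of $T_n \to T$ on $K$ in any metric on $\Pb(\Rb^d)$ compatible with the topology.

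The only mildly subtle point is producing the continuous global lift $s$ on the compact set $K$; this is the main (very minor) obstacle, and it is handled either by shrinking to a finite cover of $K$ by affine charts and a partition of unity, or more directly by noting that $K$ lies in the complement of some projective hyperplane after a further compactness argument, where a canonical affine lift is available.
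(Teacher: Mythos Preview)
The paper states this as an observation without proof, so there is nothing to compare your argument against; your overall strategy (lift to unit-norm representatives, use continuity of $(\wt T,v)\mapsto \wt T v$, then descend) is the standard one and is sound.

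There is, however, a genuine gap in the ``moreover'' part: the continuous section $s:K\to\Rb^d\setminus\{0\}$ you invoke need not exist. The tautological line bundle over $\Rb\Pb^{d-1}$ is nontrivial, and a compact $K\subset\Pb(\Rb^d)\setminus\Pb(\ker T)$ can certainly contain a projective line (e.g.\ when $d\ge 3$ and $\dim\ker T=1$, take $K$ to be any $\Rb\Pb^1$ missing the single point $\Pb(\ker T)$). Over such a $K$ there is no nowhere-vanishing continuous section, so neither of your two suggested constructions works: a partition-of-unity combination $\sum\rho_i s_i$ of local sections can vanish (the local sections differ by signs on overlaps), and the claim that $K$ must miss some hyperplane is simply false.

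The fix is immediate and actually simplifies your argument: you do not need a section at all. Let $\wt K:=\pi^{-1}(K)\subset S^{d-1}$ be the full preimage of $K$ under the $2$-to-$1$ covering $\pi:S^{d-1}\to\Pb(\Rb^d)$; this $\wt K$ is compact since $\pi$ is proper. Then $v\mapsto\|\wt T v\|$ attains a positive minimum $c>0$ on $\wt K$, and $\wt T_n v\to\wt T v$ uniformly on the compact set $\wt K$. Hence $\wt T_n v/\|\wt T_n v\|\to\wt T v/\|\wt T v\|$ uniformly on $\wt K$, and pushing down by $\pi$ gives uniform convergence $T_n\to T$ on $K$. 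Everything else in your write-up is correct; one further cosmetic point is that the ``passing to a subsequence'' in the first step is unnecessary---for each $n$ simply choose the sign of $\wt T_n$ so that $\langle\wt T_n,\wt T\rangle\ge 0$, and then the full sequence $\wt T_n\to\wt T$.
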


We can view $\Pb(\End(\Rb^d))$ as a compactification of $\PGL_d(\Rb)$ and then consider limits of automorphisms in this compactification. 

\begin{proposition}\cite[Proposition 5.6]{IZ2019}\label{prop:dynamics_of_automorphisms_1}
Suppose that $\Omega \subset \Pb(\Rb^d)$ is a properly convex domain, $p_0 \in \Omega$, and $\{g_n\}$ is a sequence in $\Aut(\Omega)$ such that 
\begin{enumerate}
\item $g_n (p_0) \rightarrow x \in \partial \Omega$, 
\item $g_n^{-1} (p_0) \rightarrow y \in \partial \Omega$, and
\item $g_n \rightarrow T$ in $\Pb(\End(\Rb^d))$. 
\end{enumerate}
Then ${\rm image}(T) \subset \Spanset  F_\Omega(x)$, $\Pb(\ker T) \cap \Omega = \emptyset$, and $y \in \Pb(\ker T)$. 
\end{proposition}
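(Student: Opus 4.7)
The plan is to work with a linear lift of $\Omega$ and exploit proper convexity. Fix a properly convex open cone $\hat\Omega \subset \Rb^d$ projecting onto $\Omega$ (so $\overline{\hat\Omega}\cap(-\overline{\hat\Omega})=\{0\}$), and choose lifts $\hat g_n \in \GL_d(\Rb)$ of $g_n$ with $\hat g_n(\hat\Omega) = \hat\Omega$. After rescaling by positive scalars and passing to a subsequence, one can arrange $\hat g_n \to \hat T$ in $\End(\Rb^d)$ with $\hat T \neq 0$ and $[\hat T]=T$. The key initial observation is the semigroup inclusion
\begin{equation*}
\hat T(\hat\Omega)\subset \overline{\hat\Omega},
\end{equation*}
obtained by passing to the limit in $\hat g_n(\hat w) \in \hat\Omega$ for $\hat w \in \hat\Omega$.

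To establish $\Pb(\ker T)\cap\Omega=\emptyset$, I would argue by contradiction. Suppose $\hat z_0 \in \hat\Omega\cap\ker\hat T$. Since $\hat\Omega$ is open in $\Rb^d$, for every $\hat w \in \Rb^d$ there is $s>0$ with $\hat z_0+s\hat w\in \hat\Omega$; then $s\hat T(\hat w)=\hat T(\hat z_0+s\hat w)\in \overline{\hat\Omega}$, so $\hat T(\hat w)\in \overline{\hat\Omega}$. Repeating with $-\hat w$ gives $\pm\hat T(\hat w)\in \overline{\hat\Omega}$, and proper convexity ($\overline{\hat\Omega}\cap(-\overline{\hat\Omega})=\{0\}$) forces $\hat T(\hat w)=0$ for every $\hat w$, contradicting $\hat T\neq 0$.

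For the image inclusion, the previous step ensures every $w \in \Omega$ lies outside $\Pb(\ker T)$, so Observation \ref{obs:limits_of_maps} gives $g_n(w) \to T(w)$. Since $g_n$ is a Hilbert isometry, $\dist_\Omega(g_n(w), g_n(p_0)) = \dist_\Omega(w, p_0)$ stays bounded; combined with $g_n(p_0) \to x$, a standard cross-ratio computation shows $T(w) \in \overline{F_\Omega(x)}$. Thus $\hat T(\hat\Omega)$ is contained in the linear subspace $V\subset \Rb^d$ spanned by a lift of $F_\Omega(x)$, and since the open cone $\hat\Omega$ spans $\Rb^d$, linearity of $\hat T$ gives $\mathrm{image}(\hat T)\subset V$, i.e.\ $\mathrm{image}(T)\subset \Spanset F_\Omega(x)$. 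Finally, for $y\in\Pb(\ker T)$: if not, then choosing a compact neighborhood $K$ of $y$ disjoint from $\Pb(\ker T)$ and applying Observation \ref{obs:limits_of_maps} uniformly on $K$ to the sequence $g_n^{-1}(p_0)\to y$ (eventually in $K$) yields $p_0=g_n(g_n^{-1}(p_0))\to T(y)$, so $T(y)=p_0\in\Omega$; but by the image inclusion just established, $T(y')\in\overline{F_\Omega(x)}\subset\partial\Omega$ for every $y'\in\Omega$, and letting $y'\to y$ in $\Omega$ and invoking continuity of $T$ at $y$ forces $T(y)\in\partial\Omega$, a contradiction.

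The main obstacle is the Hilbert-geometry fact hiding in the cross-ratio argument: if $p_n\to p\in\partial\Omega$ and $q_n\in\Omega$ with $\dist_\Omega(p_n,q_n)$ bounded, then every accumulation point of $q_n$ lies in $\overline{F_\Omega(p)}$. Proving this requires carefully tracking the chord endpoints of the projective lines through $p_n,q_n$ and observing that if a limiting chord $(a,b)\subset\overline{\Omega}$ meets $\Omega$ then $(a,b)\subset\Omega$, while if $(a,b)\subset\partial\Omega$ then any two of its points share an open face. A minor auxiliary technicality is ensuring the lifts $\hat g_n$ preserve $\hat\Omega$ rather than swap it with $-\hat\Omega$, which is handled by replacing $\hat g_n$ with $\pm\hat g_n$ as needed.
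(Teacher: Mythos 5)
Your proof is correct, and it can only be judged on its own terms here: this paper gives no proof of the proposition, importing it verbatim from \cite[Proposition 5.6]{IZ2019}. Your route is the standard one for statements of this type and, as far as the method goes, matches the source: lift to a sharp convex cone $\hat\Omega$ with $\overline{\hat\Omega}\cap(-\overline{\hat\Omega})=\{0\}$, normalize lifts so that $\hat g_n \to \hat T \neq 0$ with $\hat T(\hat\Omega)\subset\overline{\hat\Omega}$, rule out $\ker\hat T\cap\hat\Omega\neq\emptyset$ by pointedness of the cone, use the Hilbert-isometry property together with the asymptotic-face fact for the image statement, and exploit $p_0=g_n(g_n^{-1}(p_0))$ with locally uniform convergence for $y\in\Pb(\ker T)$; all of these steps check out, including the sign bookkeeping for the lifts. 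Two streamlinings are worth noting. First, the ``main obstacle'' you isolate is precisely Proposition \ref{prop:asymp_sequences} of this paper, which you may cite instead of re-deriving via the chord-endpoint analysis; it in fact yields the stronger conclusion $T(w)\in F_\Omega(x)$ for $w \in \Omega$, not just $T(w)\in\overline{F_\Omega(x)}$. Second, the final limiting argument $y'\to y$ is avoidable: since $\overline{F_\Omega(x)}$ is a face of $\overline{\Omega}$, one has $\Pb(\Spanset F_\Omega(x))\cap\overline{\Omega}=\overline{F_\Omega(x)}\subset\partial\Omega$, so the already-established inclusion ${\rm image}(T)\subset \Spanset F_\Omega(x)$ directly contradicts $T(y)=p_0\in\Omega$ without any appeal to continuity of $T$ at $y$.
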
 

In the case of ``non-tangential'' convergence we can say more. 

\begin{proposition}\cite[Proposition 5.7]{IZ2019}\label{prop:dynamics_of_automorphisms_2}
Suppose that $\Omega \subset \Pb(\Rb^d)$ is a properly convex domain, $p_0 \in \Omega$, $x \in \partial \Omega$, $\{p_n\}$ is a sequence in $[p_0, x)$ converging to $x$, and $\{g_n\}$ is a sequence in $\Aut(\Omega)$ such that 
\begin{align*}
\sup_{n \geq 1} \hil(g_n (p_0), p_n) < + \infty.
\end{align*}
If $g_n \rightarrow T \in \Pb(\End(\Rb^d))$, then $T(\Omega) = F_\Omega(x)$. 
\end{proposition}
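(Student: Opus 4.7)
The plan is to extract structural information about $T$ via Proposition \ref{prop:dynamics_of_automorphisms_1}, deduce $T(\Omega) \subset F_\Omega(x)$ through a radial Hilbert-metric comparison, and finally upgrade containment to equality by interpreting $T$ as a projection. First I would pass to a subsequence so that $g_n^{-1}(p_0) \to y$ and $g_n(p_0) \to x_0$ in $\overline{\Omega}$; the bounded-distance hypothesis $\sup_n \hil(g_n(p_0), p_n) < \infty$ together with $p_n \to x \in \partial\Omega$ forces $x_0, y \in \partial\Omega$, since the Hilbert distance from any interior point to $p_n$ diverges. Proposition \ref{prop:dynamics_of_automorphisms_1} then gives ${\rm image}(T) \subset \Spanset F_\Omega(x_0)$, $\Pb(\ker T) \cap \Omega = \emptyset$, and $y \in \Pb(\ker T)$; in particular $p_0 \notin \Pb(\ker T)$, so by Observation \ref{obs:limits_of_maps} we have $T(p_0) = \lim g_n(p_0) = x_0$.

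The crucial intermediate claim is $x_0 \in F_\Omega(x)$, so that $F_\Omega(x_0) = F_\Omega(x)$. I would prove this via the following Hilbert-metric comparison lemma: if $\{z_n\} \subset \Omega$ converges to $z \in \partial\Omega$ along the radial segment $[p_0, z)$ and $\{w_n\} \subset \Omega$ satisfies $\sup_n \hil(z_n, w_n) < \infty$, then every accumulation point of $\{w_n\}$ lies in the \emph{open} face $F_\Omega(z)$. The proof uses the cross-ratio definition of $\hil$ to show that $w_n$ cannot approach a proper sub-face of $\overline{F_\Omega(z)}$ without $\hil(z_n, w_n) \to \infty$. Applied with $z_n = p_n$ and $w_n = g_n(p_0)$, this yields $x_0 \in F_\Omega(x)$. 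Next, combining ${\rm image}(T) \subset \Spanset F_\Omega(x)$ with $T(\Omega) \subset \overline{\Omega}$ gives $T(\Omega) \subset \overline{\Omega} \cap \Spanset F_\Omega(x) = \overline{F_\Omega(x)}$. Since $T(\Omega)$ is convex, relatively open in $\Pb({\rm image}(T))$, and contains the relative interior point $T(p_0) = x_0 \in F_\Omega(x)$ of $\overline{F_\Omega(x)}$, it cannot be contained in any proper sub-face; hence $\Spanset T(\Omega) = \Spanset F_\Omega(x)$ and therefore $T(\Omega) \subset F_\Omega(x)$.

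The main obstacle is the reverse containment $F_\Omega(x) \subset T(\Omega)$. My approach is to realize $T$ as a genuine projection. Choosing a lift $\tilde T \in \End(\Rb^d)$, I would first verify that ${\rm image}(\tilde T)$ and $\ker \tilde T$ are complementary subspaces of $\Rb^d$, exploiting that $\Pb(\ker T)$ meets $\overline{\Omega}$ only in $\overline{F_\Omega(y)}$ while ${\rm image}(T) \subset \Spanset F_\Omega(x)$ lies on the ``opposite side'' of $\Omega$; the asymptotics of normalized lifts $\tilde g_n \to \tilde T$, together with the analogous asymptotics of $\tilde g_n^{-1}$, give that these two subspaces span $\Rb^d$. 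Once this is established, $T$ becomes a projection from $\Pb(\Rb^d) \setminus \Pb(\ker T)$ onto $\Pb({\rm image}(T)) = \Pb(\Spanset F_\Omega(x))$ parallel to $\Pb(\ker T)$, and $T(\Omega)$ is the image of $\Omega$ under this projection. To conclude, I would argue by contradiction: if some $q \in F_\Omega(x) \setminus T(\Omega)$ existed, then the preimage $L_q := T^{-1}(q) \cup \Pb(\ker T)$ would be a projective subspace of $\Pb(\Rb^d)$ avoiding $\Omega$, producing a supporting hyperplane that places all of $\overline{\Omega}$ on one side of $L_q$. Tracing this support condition through the projection forces $q$ to lie on a proper sub-face of $\overline{F_\Omega(x)}$, contradicting $q \in F_\Omega(x)$. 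Making this convex-geometric duality argument precise, especially verifying that the separating structure respects the open-face decomposition of $\partial\Omega$, is the principal technical step.
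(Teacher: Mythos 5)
First, a caveat on the comparison: the paper does not actually prove this proposition --- it quotes it from \cite{IZ2019} and points to \cite[Proposition 2.13]{Z2020} for a proof --- so I am judging your argument on its own merits and against the standard argument in the cited source. Your first half is essentially correct: your ``Hilbert-metric comparison lemma'' is exactly Proposition \ref{prop:asymp_sequences} (which you could simply cite, and which does not even require radiality), and combined with Proposition \ref{prop:dynamics_of_automorphisms_1}, Observation \ref{obs:limits_of_maps}, and the identity $\overline{\Omega} \cap \Pb(\Spanset F_\Omega(x)) = \overline{F_\Omega(x)}$ it gives $T(\Omega) \subset F_\Omega(x)$. (One slip: your ``hence $\Spanset T(\Omega) = \Spanset F_\Omega(x)$'' is neither established at that point nor needed; what you actually need is that a convex set which is open in its span, lies in $\overline{F_\Omega(x)}$, and contains the point $x_0 \in \relint \overline{F_\Omega(x)} = F_\Omega(x)$ must lie in $F_\Omega(x)$, which follows by pushing any putative relative-boundary point slightly past itself inside $T(\Omega)$.)

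The reverse inclusion is where there is a genuine gap: your key claim that ${\rm image}(\tilde T)$ and $\ker \tilde T$ are complementary is \emph{false} under the hypotheses of the proposition. Take $\Omega = \mathbb{B}^2$ with $p_0$ the center, and let $g_n = \tau_n \rho_n$, where $\tau_n$ is the hyperbolic translation along the geodesic through $p_0$ with forward endpoint $x$ chosen so that $\tau_n(p_0) = p_n$, and $\rho_n$ is the rotation about $p_0$ by an angle $\theta_n \rightarrow \pi$. Then $g_n(p_0) = p_n$, so the hypothesis holds with constant zero, and $\tilde g_n$ (suitably normalized) converges to $\tilde T = P_+ \tilde\rho_\pi$, where $P_+$ is the rank-one spectral projection of the $\tau_n$. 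Here ${\rm image}(\tilde T)$ is the line over $x$, while $\Pb(\ker \tilde T)$ is the tangent line to $\partial \mathbb{B}^2$ at $x$; thus ${\rm image}(\tilde T) \subset \ker \tilde T$, and $T$ is in no sense a projection parallel to its kernel (the conclusion of the proposition still holds, trivially, since $F_\Omega(x) = \{x\}$). So your program cannot start. The auxiliary claim you lean on --- that $\Pb(\ker T)$ meets $\overline{\Omega}$ only in $\overline{F_\Omega(y)}$ --- is not given by Proposition \ref{prop:dynamics_of_automorphisms_1} and is not proved. And even granting a projection structure, the duality endgame has its own hole: the supporting hyperplane $H \supset T^{-1}(q) \cup \Pb(\ker T)$ need not contain $q$ itself (that would require $\tilde T \tilde q \in \Rb \tilde q$), so the asserted conclusion that $q$ lies in a proper sub-face of $\overline{F_\Omega(x)}$ does not follow as sketched.

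The standard repair avoids linear algebra on $T$ entirely and is where the radial hypothesis really enters. Fix $z \in F_\Omega(x)$ and let $C_0 := \sup_n \hil(g_n(p_0), p_n)$. Applying Proposition \ref{prop:Hausdorff_distance_between_lines} with $p_1 = q_1 = p_0$, $p_2 = x$, $q_2 = z$ gives, for each $n$, a point $z_n \in [p_0, z)$ with $\hil(p_n, z_n) \leq \dist_{F_\Omega(x)}(x,z) =: \sigma$; then $\hil(g_n(p_0), z_n) \leq C_0 + \sigma$ and $z_n \rightarrow z$ (since $\hil(p_0, z_n) \rightarrow \infty$ along $[p_0,z)$). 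Setting $w_n := g_n^{-1}(z_n)$, the points $w_n$ lie in the compact ball $\overline{\Bc_\Omega(p_0, C_0 + \sigma)}$, so a subsequence converges to some $w \in \Omega$; since $\Pb(\ker T) \cap \Omega = \emptyset$, the locally uniform convergence in Observation \ref{obs:limits_of_maps} yields $T(w) = \lim g_n(w_n) = \lim z_n = z$. Hence $F_\Omega(x) \subset T(\Omega)$. This compactness-plus-inverse argument is, in outline, the proof in the cited source, and is a genuinely different mechanism from your projection/duality plan.
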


Proposition 5.7 in~\cite{IZ2019} is stated differently and a proof of the statement above can be found in~\cite[Proposition 2.13]{Z2020}.

\subsection{Projective simplices}

A subset $S \subset \Pb(\Rb^d)$ is called a \emph{$k$-dimensional simplex} in $\Pb(\Rb^d)$  if there exists $g \in \PGL_d(\Rb)$ such that 
\begin{align*}
g S = \left\{ [x_1:\dots:x_{k+1}:0:\dots:0] \in\Pb(\Rb^d) : x_1,\dots,x_{k+1} > 0 \right\}.
\end{align*}
In this case, we call the $k+1$ points 
\begin{align*}
g^{-1}[1:0:\dots:0], g^{-1}[0:1:0:\dots:0], \dots, g^{-1}[0:\dots:0:1:0:\dots:0] \in \partial S
\end{align*}
the \emph{vertices of} $S$. 

The Hilbert metric on a simplex can be explicitly computed (see~\cite[Proposition 1.7]{N1988}, ~\cite{dlH1993}, or ~\cite{V2014}) and from this explicit form one obtains the following.

\begin{proposition}\label{prop:simplex_quasi_isometric_to_R} If $\Omega \subset \Pb(\Rb^d)$ is a properly convex domain and $S \subset \Omega$ is a properly embedded simplex, then $(S, \dist_\Omega)$ is quasi-isometric to $\Rb^{\dim S}$. 
\end{proposition}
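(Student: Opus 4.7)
The plan is to reduce the statement to the well-known explicit formula for the Hilbert metric on a simplex, which is cited in the paper as \cite{N1988,dlH1993,V2014}. The entire content of the proof amounts to two observations.

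First, I would show that because $S$ is properly embedded in $\Omega$, the restriction of $\dist_\Omega$ to $S$ coincides with the intrinsic Hilbert metric $\dist_S$ that $S$ carries when viewed as a properly convex domain in $\Pb(\Spanset S)$. For distinct $p,q \in S$, let $\ell$ be the projective line through $p$ and $q$, and let $[a,b] = \ell \cap \overline{S}$ and $[a',b'] = \ell \cap \overline{\Omega}$, both ordered so that $p$ lies between the first endpoint and $q$. Since $\overline{S}\subset\overline{\Omega}$, we have $[a,b]\subset [a',b']$. By hypothesis $\partiali S \subset \partiali \Omega$, so $a \in \partial \Omega$. On the other hand, starting at $p \in \Omega$ and moving toward $a'$, the ray first meets $\partial \Omega$ precisely at $a'$, forcing $a=a'$; the same argument gives $b=b'$. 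Hence the cross ratios used to compute $\dist_\Omega(p,q)$ and $\dist_S(p,q)$ agree.

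Second, I would invoke the classical description of the Hilbert geometry of an open simplex. Writing $S$ in barycentric coordinates on $\Pb(\Spanset S)$ as $\{x \in \Rb^{k+1} : \sum x_i = 1, \ x_i>0\}$ with $k = \dim S$, the logarithmic chart
\[
(x_1, \ldots, x_{k+1}) \longmapsto \bigl(\log(x_1/x_{k+1}), \ldots, \log(x_k/x_{k+1})\bigr)
\]
provides an isometry between $(S,\dist_S)$ and $(\Rb^k, \norm{\cdot})$ for a certain polyhedral norm $\norm{\cdot}$ (see \cite[Proposition 1.7]{N1988}, \cite{dlH1993}, \cite{V2014}). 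Since any two norms on $\Rb^k$ are bi-Lipschitz equivalent, $(\Rb^k,\norm{\cdot})$ is quasi-isometric to Euclidean $\Rb^k$, and combining with the previous step yields the claim.

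There is essentially no obstacle here; the proposition is a direct consequence of cited facts. The only step that is not an outright citation is the identification $\dist_\Omega|_S = \dist_S$, which uses the proper embedding hypothesis in the form $\partiali S \subset \partiali \Omega$ to guarantee that the line-segment endpoints used in the two cross ratio computations coincide.
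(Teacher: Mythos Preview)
Your proposal is correct and matches the paper's approach exactly. The paper does not give a proof beyond citing the explicit formula for the Hilbert metric on a simplex and adding the remark that $\dist_S = \dist_\Omega|_{S\times S}$ by definition; you have simply spelled out both of these points in slightly more detail.
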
 

\begin{remark} By definition $\dist_S= \dist_\Omega|_{S \times S}$ and so the quasi-isometry constants only depend on $\dim S$. \end{remark}

\subsection{The Hausdorff distance and local Hausdorff topology}When $(X,\dist)$ is a metric space, the \emph{Hausdorff distance} between two subsets $A,B \subset X$ is defined by 
\begin{align*}
\dist^{\Haus}_X(A,B) = \max \left\{ \sup_{a \in A} \inf_{b \in B} \dist(a,b), \ \sup_{b \in B} \inf_{a \in A} \dist(a,b) \right\}.
\end{align*}
When $(X,\dist)$ is a complete metric space space,  $d^{\Haus}_X$ is a complete metric on the set of non-empty compact subsets of $X$. 

The local Hausdorff topology is a natural topology on the set of non-empty closed sets in $X$. For a non-empty closed set $C_0$, a base point $x_0 \in X$, and $r_0, \epsilon_0 > 0$ define the set $U(C_0,x_0,r_0,\epsilon_0)$ to consist of all closed subsets $C \subset X$ where
\begin{align*}
\dist^{\Haus}_X\Big(C_0 \cap \Bc_X(x_0,r_0),\, C \cap \Bc_X(x_0,r_0)\Big) < \epsilon_0.
\end{align*}
The \emph{local Hausdorff topology} on the set of non-empty closed subsets of $X$ is the topology generated by the sets $U(\cdot, \cdot, \cdot, \cdot)$.

\subsection{Distance estimates for the Hilbert metric}

The asymptotic behavior of the Hilbert distance connects naturally with the structure of open faces in the boundary.

\begin{proposition}\label{prop:asymp_sequences} Suppose that $\Omega \subset \Pb(\Rb^d)$ is a properly convex domain and $\{p_n\}, \{q_n\}$ are sequences in $\Omega$ where $p_n \rightarrow p \in \overline{\Omega}$ and $q_n \rightarrow q \in \overline{\Omega}$. If 
$$
\liminf_{n \rightarrow \infty} \dist_\Omega(p_n, q_n) < +\infty,
$$
then $F_\Omega(p)=F_\Omega(q)$ and 
$$
\dist_{F_\Omega(p)}(p,q) \leq \liminf_{n \rightarrow \infty} \dist_\Omega(p_n, q_n). 
$$
\end{proposition}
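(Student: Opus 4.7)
The plan is to pass to a subsequence realizing the liminf, track the supporting line segments of the Hilbert geodesics between $p_n$ and $q_n$, and read off both conclusions from the limiting cross ratio.

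First, if $p = q$ there is nothing to prove. So assume $p \neq q$ and pass to a subsequence so that $\dist_\Omega(p_n, q_n) \to L := \liminf \dist_\Omega(p_n, q_n) < +\infty$. For each large $n$, the projective line through $p_n$ and $q_n$ meets $\overline{\Omega}$ in a closed segment $[a_n, b_n]$, where the points are ordered $a_n, p_n, q_n, b_n$. By compactness of $\overline{\Omega}$, after a further subsequence $a_n \to a$ and $b_n \to b$ in $\overline{\Omega}$. Then $p,q \in [a,b] \subset \overline{\Omega}$, and since $p \neq q$ we have $a \neq b$.

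The definition of the Hilbert metric gives
\[
\frac{1}{2}\log [a_n, p_n, q_n, b_n] = \dist_\Omega(p_n, q_n) \to L.
\]
If any two of $a,p,q,b$ coincided, the limiting cross ratio would be $0$ or $+\infty$, which contradicts $0 \leq L < +\infty$. Hence $a, p, q, b$ are four distinct collinear points with $p, q \in (a, b)$, and by continuity of the cross ratio at this configuration, $[a,p,q,b] = e^{2L}$.

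Next, I claim $F_\Omega(p) = F_\Omega(q)$. Indeed, $(a,b)$ is an open line segment contained in $\overline{\Omega}$ and containing both $p$ and $q$, so the definition of the open face gives $q \in F_\Omega(p)$, hence equality by Observation~\ref{obs:faces}(3). It remains to identify the Hilbert distance in $F_\Omega(p)$. Let $[a', b']$ be the intersection of the projective line through $p,q$ with $\overline{F_\Omega(p)}$. Since $F_\Omega(p) \subset \overline{\Omega}$, we have $[a',b'] \subset [a,b]$. Conversely, every $z \in (a,b)$ lies in $F_\Omega(p)$ by the argument just given (the open segment $(a,b)$ witnesses it), so $(a,b) \subset F_\Omega(p) \cap \mathrm{line}$, while neither $a$ nor $b$ can lie in $F_\Omega(p)$ because $[a,b]$ is maximal in $\overline{\Omega}$ along this line. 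Thus $[a',b'] = [a,b]$, and
\[
\dist_{F_\Omega(p)}(p,q) = \frac{1}{2}\log[a,p,q,b] = L,
\]
which gives the desired inequality (in fact, equality on the extracted subsequence).

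The main obstacle is the face identification in the last paragraph: carefully verifying that $a$ and $b$ are exactly the endpoints of the chord in $\overline{F_\Omega(p)}$, i.e.\ that no point beyond $a$ or $b$ on the line can lie in $\overline{F_\Omega(p)}$. This follows from the maximality of $[a,b]$ inside $\overline{\Omega}$ together with $\overline{F_\Omega(p)} \subset \overline{\Omega}$, but it must be handled carefully because $F_\Omega(p)$ is only open in its own span, not in $\overline{\Omega}$.
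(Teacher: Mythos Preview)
Your approach is exactly the natural unpacking of the paper's one-line proof, and the argument for $F_\Omega(p)=F_\Omega(q)$ is clean and correct. However, the final step contains a genuine error: the equality $[a',b']=[a,b]$ can fail, so your claimed equality $\dist_{F_\Omega(p)}(p,q)=L$ is too strong.

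Concretely, take $\Omega$ to be the open $3$-simplex $\{[x_1{:}x_2{:}x_3{:}x_4]:x_i>0\}$, set $p=[1{:}1{:}1{:}0]$, $q=[2{:}1{:}1{:}0]$, and let $p_n=[1{:}1{:}1{:}2/n]$, $q_n=[2{:}1{:}1{:}1/n]$. Then $a_n\to\alpha=[0{:}1{:}1{:}0]$ but $b_n=[3{:}1{:}1{:}0]$ for every $n$, while the chord of $\overline{F_\Omega(p)}$ through $p,q$ has endpoint $\beta=[1{:}0{:}0{:}0]\neq b$. Here $\dist_\Omega(p_n,q_n)=\log 2$ for all $n$, whereas $\dist_{F_\Omega(p)}(p,q)=\tfrac12\log 2$, so the inequality is strict. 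In particular $b\in F_\Omega(p)$ and $[a,b]$ is \emph{not} maximal in $\overline{\Omega}$ along $\ell$; your justification ``because $[a,b]$ is maximal in $\overline{\Omega}$ along this line'' breaks down, and the inclusion $[a',b']\subset[a,b]$ is false.

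The fix is painless and you already have the ingredient: from $(a,b)\subset F_\Omega(p)$ you get $[a,b]\subset[a',b']$, and then monotonicity of the cross ratio in the outer points gives
\[
\dist_{F_\Omega(p)}(p,q)=\tfrac12\log[a',p,q,b']\le \tfrac12\log[a,p,q,b]=L,
\]
which is precisely the inequality in the statement. Drop the equality claim and the proof is complete.
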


\begin{proof} This follows immediately from the definition of the Hilbert metric. \end{proof} 

We will frequently use the following fact about the Hausdorff distance between line segments. 

\begin{proposition}[{\cite[Proposition 5.3]{IZ2019}}]
\label{prop:Hausdorff_distance_between_lines}
 Suppose that $\Omega \subset \Pb(\Rb^d)$ is a properly convex domain. If $p_1, p_2, q_1, q_2 \in \overline{\Omega}$ are such that  $(p_1,p_2) \subset \Omega$, $F_{\Omega}(p_1)=F_{\Omega}(q_1)$, and $F_{\Omega}(p_2)=F_{\Omega}(q_2)$, then 
$$
\dist_\Omega^{\Haus}( [p_1,p_2] \cap \Omega, [q_1, q_2] \cap \Omega ) \leq \max\{ \dist_{F_{\Omega}(p_1)}(p_1,q_1), \dist_{F_{\Omega}(p_2)}(p_2,q_2) \}.
$$
\end{proposition}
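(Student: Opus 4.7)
The proof proceeds by a direct cross-ratio computation. By the symmetry of the Hausdorff distance, it suffices to show that for each $x \in (p_1, p_2)$ there exists $y \in (q_1, q_2)$ with
\[
\dist_\Omega(x, y) \leq D := \max\{\dist_{F_\Omega(p_1)}(p_1, q_1),\, \dist_{F_\Omega(p_2)}(p_2, q_2)\},
\]
and the analogous statement with the roles of the $p_i$ and $q_i$ swapped; the segment $(q_1, q_2)$ is contained in $\Omega$ by Observation~\ref{obs:faces}(4) applied to the hypotheses $F_\Omega(p_i) = F_\Omega(q_i)$.

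I would construct $y$ by affine interpolation. Fix an affine chart $\mathbb{A}$ containing $\overline{\Omega}$, write $x = (1-s) p_1 + s p_2$ in affine coordinates, and set $y := (1-s) q_1 + s q_2$. For $i = 1, 2$ let $\alpha_i(r) := (1-r) p_i + r q_i$ parametrize the affine line through $p_i$ and $q_i$; the line through $x$ and $y$ then admits the dual parametrization
\[
z(r) := (1-r)\, x + r\, y = (1-s)\,\alpha_1(r) + s\, \alpha_2(r).
\]
Because $[p_i, q_i] \subset \overline{F_\Omega(p_i)}$ and each face is open in its span, $\alpha_i(r) \in \overline{F_\Omega(p_i)}$ precisely for $r$ in a closed interval $[v_i^-, v_i^+]$ with $v_i^- < 0 < 1 < v_i^+$, and the cross-ratio definition of the Hilbert metric gives
\[
e^{2\, \dist_{F_\Omega(p_i)}(p_i, q_i)} = \frac{v_i^+\,(1 - v_i^-)}{(-v_i^-)(v_i^+ - 1)}.
\]
Writing $[r_-, r_+]$ for the maximal interval on which $z(r) \in \overline{\Omega}$, the same expression with $r_\pm$ in place of $v_i^\pm$ computes $e^{2\, \dist_\Omega(x, y)}$.

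Two ingredients then drive the estimate. First, convexity of $\overline{\Omega}$ combined with $\alpha_i(r) \in \overline{\Omega}$ for $r \in [v_i^-, v_i^+]$ yields $r_+ \geq \min(v_1^+, v_2^+)$ and $r_- \leq \max(v_1^-, v_2^-)$. Second, the cross-ratio function $(a,b) \mapsto a(1-b)/((-b)(a-1))$ is strictly decreasing in $a > 1$ and in $-b > 0$. Together these give $e^{2\, \dist_\Omega(x, y)} \leq e^{2D}$ whenever $\min_i v_i^+$ and $\max_i v_i^-$ are attained at the same index.

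The main obstacle is the remaining \emph{mixed-index} case, where these extrema are attained at different indices; here the naive convexity estimate on $r_\pm$ is not sharp enough. I would resolve this by a refined analysis of the face structure along $z(r)$. Specifically, at $r = \min(v_1^+, v_2^+)$ the minimizing $\alpha_i$ has just exited $\overline{F_\Omega(p_i)}$, but the other $\alpha_j$ is still in the relative interior of its face; since $(p_1, p_2) \subset \Omega$ forces $F_\Omega(p_1)$ and $F_\Omega(p_2)$ to share no common proper boundary face, Observation~\ref{obs:faces}(4) implies the segment $[\alpha_1(r), \alpha_2(r)]$ has interior in $\Omega$. Hence $z(r)$ persists in $\overline{\Omega}$ beyond $\min(v_1^+, v_2^+)$, and quantifying this persistence together with the symmetric analysis near $r_-$ forces the matched-index bound on the cross ratio. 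This quantification is the most delicate step of the proof.
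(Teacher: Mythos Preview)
The paper does not supply its own proof of this proposition; it is simply quoted from \cite[Proposition~5.3]{IZ2019}. So there is no in-paper argument to compare against, and your proposal must be assessed on its own merits.

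Your cross-ratio setup is reasonable, and the matched-index case is handled correctly: when $\min_i v_i^+$ and $\max_i v_i^-$ occur at the same index, the monotonicity you state gives the bound immediately. The genuine gap is the mixed-index case, and your proposed resolution does not go through as written. You invoke Observation~\ref{obs:faces}(4) at $r=\min(v_1^+,v_2^+)$ to conclude that $(\alpha_1(r),\alpha_2(r))$ has interior in $\Omega$, but that observation requires the endpoints to lie in the \emph{open} faces $F_\Omega(p_i)$. At $r=v_i^+$ the point $\alpha_i(r)$ lies on $\partial F_\Omega(p_i)$, not in $F_\Omega(p_i)$, and for $r>v_i^+$ it has left $\overline{\Omega}$ altogether (the line $\alpha_i$ lies in $\Spanset F_\Omega(p_i)$, and $\overline{\Omega}\cap \Spanset F_\Omega(p_i)=\overline{F_\Omega(p_i)}$). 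So the hypothesis fails exactly where you need it.

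Even setting this aside, you never convert any ``persistence'' of $z(r)$ in $\overline{\Omega}$ into a quantitative improvement of the bounds $r_+\geq\min_i v_i^+$ and $r_-\leq\max_i v_i^-$, and such an improvement is genuinely necessary: with $v_1^\pm=(19/9,-1/9)$ and $v_2^\pm=(10/9,-10/9)$ both face cross-ratios equal $19$, while the naive mixed bound on the chord cross-ratio is $100$. In this example the affine-interpolated $y$ does satisfy $\dist_\Omega(x,y)\leq \tfrac12\log 19$, but only because $r_\pm$ are substantially better than the naive estimate; your argument supplies no mechanism to see this. The ``most delicate step'' you flag is thus not merely delicate but unexecuted, and it is not clear that the affine-interpolation choice of $y$ is the one that makes the argument close.
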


Using induction Proposition~\ref{prop:Hausdorff_distance_between_lines} can be upgraded as follows. 

\begin{proposition}\label{prop:Hausdorff_distance_between_ch} Suppose that $\Omega \subset \Pb(\Rb^d)$ is a properly convex domain, $q_1,\dots, q_m \in \overline{\Omega}$, and 
$$
z \in \relint\Big({\rm ConvHull}_\Omega(q_1,\dots,q_m) \Big).
$$
If $p_1, \dots, p_m \in \overline{\Omega}$ and $F_\Omega(p_j) = F_\Omega(q_j)$ for all $1 \leq j \leq m$, then 
$$
\dist_{F}^{\Haus}\Big( {\rm ConvHull}_\Omega(q_1,\dots,q_m) \cap F, {\rm ConvHull}_\Omega(p_1,\dots,p_m) \cap F \Big) \leq \max_{1 \leq j \leq m} \dist_{F_\Omega(q_j)}(p_j,q_j)
$$
where $F:=F_\Omega(z)$. 
\end{proposition}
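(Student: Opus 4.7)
My plan is to prove this by induction on $m$, leveraging Proposition~\ref{prop:Hausdorff_distance_between_lines} as the engine. Since the stated Hausdorff bound on $F$ follows once one has a pointwise matching of points that preserves open faces, it is natural to strengthen the inductive statement. Specifically, I would prove by induction on $m$ that for every $x \in {\rm ConvHull}_\Omega(q_1, \dots, q_m)$ there exists $x' \in {\rm ConvHull}_\Omega(p_1, \dots, p_m)$ satisfying $F_\Omega(x') = F_\Omega(x)$ and $\dist_{F_\Omega(x)}(x, x') \leq \epsilon$, where $\epsilon := \max_{1 \leq j \leq m} \dist_{F_\Omega(q_j)}(p_j, q_j)$. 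Applying this claim (together with its symmetric counterpart exchanging the $p_j$'s and $q_j$'s) to points $x \in F$ yields the desired Hausdorff bound. The base case $m = 1$ is immediate.

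For the inductive step, given $x \in {\rm ConvHull}_\Omega(q_1, \dots, q_m)$, I would write $x \in [q_m, y]$ for some $y \in {\rm ConvHull}_\Omega(q_1, \dots, q_{m-1})$; such a $y$ exists because the convex hull of $m$ points is the union of the segments joining $q_m$ to points of the convex hull of the remaining $m-1$ points. The inductive hypothesis applied to $y$ yields $y' \in {\rm ConvHull}_\Omega(p_1, \dots, p_{m-1})$ with $F_\Omega(y') = F_\Omega(y)$ and $\dist_{F_\Omega(y)}(y, y') \leq \epsilon$. The boundary cases $x = q_m$ and $x = y$ are handled by taking $x' = p_m$ and $x' = y'$ respectively, using the hypothesis $F_\Omega(p_m) = F_\Omega(q_m)$. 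In the remaining case $x \in (q_m, y)$, Observation~\ref{obs:faces}(4) applied with endpoints $q_m, y$ and with the points $p_m \in F_\Omega(q_m)$ and $y' \in F_\Omega(y)$ yields $(p_m, y') \subset F_\Omega(x)$. I would then invoke Proposition~\ref{prop:Hausdorff_distance_between_lines}, applied inside the properly convex domain $F_\Omega(x)$ (viewed as a domain in $\Pb(\Spanset F_\Omega(x))$), to produce $x' \in [p_m, y'] \cap F_\Omega(x)$ with $\dist_{F_\Omega(x)}(x, x') \leq \max\{\dist_{F_\Omega(q_m)}(p_m, q_m),\, \dist_{F_\Omega(y)}(y, y')\} \leq \epsilon$.

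The main technical obstacle is justifying the invocation of Proposition~\ref{prop:Hausdorff_distance_between_lines} inside $F_\Omega(x)$ rather than inside $\Omega$. This requires reconciling the two notions of open face: for $w \in \overline{F_\Omega(x)}$ one needs $F_{F_\Omega(x)}(w) = F_\Omega(w) \cap \overline{F_\Omega(x)}$ with compatible Hilbert metrics on their common portion, so that the ambient hypotheses $F_\Omega(p_m) = F_\Omega(q_m)$ and $F_\Omega(y) = F_\Omega(y')$ transfer to the face-level hypotheses required by Proposition~\ref{prop:Hausdorff_distance_between_lines}. These identifications follow from the definitions together with the convexity of $\overline{F_\Omega(x)}$ inside $\overline{\Omega}$; once they are in hand, the induction closes without further complications.
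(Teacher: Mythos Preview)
Your proposal is correct and follows essentially the same inductive scheme as the paper's proof: both induct on $m$, decompose a point as lying on a segment from $q_m$ (or $p_m$) to a point in the hull of the first $m-1$ vertices, apply the inductive hypothesis to that point, and then invoke Observation~\ref{obs:faces}(4) together with Proposition~\ref{prop:Hausdorff_distance_between_lines} to control the distance. The paper phrases the induction over the relative interiors $S_k = \relint(\CH_\Omega(q_1,\dots,q_k))$ and shows they all lie in a common face $F_k$, whereas you carry a slightly stronger pointwise statement over the full hull; you also make explicit the face-compatibility $F_{F_\Omega(x)}(w) = F_\Omega(w)$ needed to apply Proposition~\ref{prop:Hausdorff_distance_between_lines} inside $F_\Omega(x)$, which the paper uses without comment.
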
 

\begin{proof} For $1 \leq k \leq m$, let 
$$
S_k = \relint\Big({\rm ConvHull}_\Omega(q_1,\dots,q_k) \Big) \quad \text{and} \quad S^\prime_k = \relint\Big({\rm ConvHull}_\Omega(p_1,\dots,p_k) \Big).
$$
We claim that for each $k$ there exists a face $F_k$ of $\Omega$ such that $S_k \cup S_k^\prime \subset F_k$ and 
$$
\dist^{\Haus}_{F_k}(S_k, S_k^\prime) \leq \max_{1 \leq j \leq k} \dist_{F_\Omega(q_j)}(p_j,q_j).
$$

We induct on $k$. The base case $k=1$ is by definition. Fix $k > 1$ and $w \in S_k$. Then there exists $q^\prime \in S_{k-1}$ such that $w \in (q^\prime,q_k)$. By induction there exists $p^\prime \in S_{k-1}^\prime$ such that 
$$
\dist_{F_{k-1}}(q^\prime, p^\prime) \leq \max_{1 \leq j \leq k-1} \dist_{F_\Omega(q_j)}(p_j,q_j).
$$
Notice that $(p^\prime,p_k) \subset S_k^\prime$ and so Observation \ref{obs:faces} part (4) implies that 
$$
S_k \cup S_k^\prime \subset F_\Omega(w) =: F_k.
$$
Further, by Proposition \ref{prop:Hausdorff_distance_between_lines}, 
$$
\dist_{F_k}\left(w, S_k^\prime\right) \leq \dist_{F_k}^{\Haus}\left( (q^\prime,q_k),(p^\prime,p_k)\right) \leq \max_{1 \leq j \leq k} \dist_{F_\Omega(q_j)}(p_j,q_j).
$$
Since $w \in S_k$ was arbitrary, we see that 
$$
\sup_{w \in S_k} \dist_\Omega\left(w,S_k^\prime\right) \leq \max_{1 \leq j \leq k} \dist_{F_\Omega(q_j)}(p_j,q_j).
$$

Repeating the same argument with $w^\prime \in S_k^\prime$ shows that 
$$
\sup_{w^\prime \in S_k^\prime} \dist_\Omega\left(w^\prime,S_k\right) \leq \max_{1 \leq j \leq k} \dist_{F_\Omega(q_j)}(p_j,q_j).
$$
Hence 
$$
\dist^{\Haus}_{F_k}(S_k, S_k^\prime) \leq \max_{1 \leq j \leq k} \dist_{F_\Omega(q_j)}(p_j,q_j).
$$
This completes the proof of the claim. 

Finally, notice that $F=F_m$ and $S_m$ (respectively $S_m^\prime$) is dense in 
$$
{\rm ConvHull}_\Omega(q_1,\dots,q_m) \cap F
$$ 
(respectively ${\rm ConvHull}_\Omega(p_1,\dots,p_m) \cap F$). So the result follows. 
\end{proof}

\section{Relatively hyperbolic spaces and groups}\label{sec:rel hyp groups}

In this expository section we recall some basic properties of relatively hyperbolic groups and spaces. We define relative hyperbolic spaces and groups in terms of Dru{\c t}u and Sapir's tree-graded spaces (see~\cite[Definition 2.1]{DS2005}). 

\begin{definition} \ \begin{enumerate}
\item A complete geodesic metric space $(X,\dist)$ is \emph{relatively hyperbolic with respect to a collection of subsets $\Sc$} if all its asymptotic cones, with respect to a fixed non-principal ultrafilter, are tree-graded with respect to the collection of ultralimits of the elements of $\Sc$. 
\item A finitely generated group $G$ is \emph{relatively hyperbolic with respect to a family of subgroups $\{H_1,\dots, H_k\}$} if the Cayley graph of $G$ with respect to some (hence any) finite set of generators is relatively hyperbolic with respect to the collection of left cosets $\{g H_i : g \in G, i=1,\dots,k\}$. 
\end{enumerate}
\end{definition}

\begin{remark}These are one among several equivalent definitions of relatively hyperbolic spaces/groups, see~\cite{DS2005} and the references therein for more details. 

\end{remark}

Recall that if $(X,\dist)$ is a metric space, $A \subset X$, and $r> 0$, then 
\begin{align*}
\Nc_X(A,r):=\left\{ x \in X : \dist(x,A)< r\right\}.
\end{align*}
We will frequently use the following properties of relatively hyperbolic spaces.

\begin{theorem}
\label{thm:rh_ds}
Suppose that $(X, \dist_X)$ is relatively hyperbolic with respect to $\Sc$. Then:
\begin{enumerate}

\item \label{thm:rh_embeddings_of_flats} \emph{(Dru{\c t}u-Sapir~\cite[Corollary 5.8]{DS2005})} For any $A \geq 1$ and $B \geq 0$ there exists $M =M(A,B)$ such that: if $k \geq 2$ and $f : \Rb^k \rightarrow X$ is an $(A,B)$-quasi-isometric embedding, then there exists some $S \in \Sc$ such that 
\begin{align*}
f(\Rb^k) \subset \Nc_X(S,M).
\end{align*}

\item \label{thm:quasi-convexity} \emph{(Dru{\c t}u-Sapir~\cite[Lemma 4.15]{DS2005})} For any $A \geq 1$ and $B \geq 0$ there exists $t =t(A,B)$ such that: if  $S \in \Sc$, $r \geq 1$, and $\gamma: [0,T] \rightarrow X$ is an $(A,B)$-quasi-geodesic with $\gamma(0), \gamma(T) \in \Nc_X(S,r)$, then 
\begin{align*}
\gamma([0,T]) \subset \Nc_{X}(S,tr).
\end{align*}

\item \label{thm:abstract_coarsely_isolated} \emph{(Dru{\c t}u-Sapir~\cite[Theorem 4.1]{DS2005})}  For any $r>0$ there exists $D=D(r) > 0$ such that: if  $S_1,S_2 \in \Sc$ are distinct, then 
\begin{align*}
\diam_X \left( \Nc_X(S_1,r) \cap \Nc_X(S_2, r) \right) \leq D.
\end{align*}

\end{enumerate}
\end{theorem}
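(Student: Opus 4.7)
These three statements are standard properties of relatively hyperbolic spaces, each due to Druţu--Sapir, so in a formal write-up the cleanest proof is simply to quote~\cite{DS2005}. If forced to reprove them from the tree-graded-asymptotic-cone definition adopted here, the plan is uniform: in each case I would assume the quantitative claim fails along a sequence, rescale the metric by the diameter of the failure, pass to an asymptotic cone $X_\omega$, and derive a configuration that contradicts the defining properties of a tree-graded space. Recall the two axioms I would keep at hand: distinct pieces of a tree-graded space meet in at most one point, and every simple geodesic triangle lies in a single piece (equivalently, every connected subset without cut points lies in a single piece).

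For part (\ref{thm:rh_embeddings_of_flats}), I would suppose there are $(A,B)$-quasi-isometric embeddings $f_n : \Rb^k \to X$ (with $k \geq 2$) whose images escape every $n$-neighborhood of every piece. Rescaling the metric on $X$ appropriately and basing at $f_n(p_n)$, the ultralimit $f_\omega$ is a bi-Lipschitz embedding of $\Rb^k$ into $X_\omega$. Since $\Rb^k$ has no cut points for $k \geq 2$, the image $f_\omega(\Rb^k)$ is a connected cut-point-free subset of the tree-graded space $X_\omega$, so it must lie in a single piece $S_\omega$, which is the ultralimit of some sequence in $\Sc$. Unraveling the asymptotic-cone construction then contradicts the assumption that $f_n(\Rb^k)$ escapes every neighborhood of every $S \in \Sc$.

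For part (\ref{thm:quasi-convexity}), assume $\gamma_n$ is an $(A,B)$-quasi-geodesic with endpoints in $\Nc_X(S_n, r_n)$ containing an interior point at distance $t_n r_n$ from $S_n$ with $t_n \to \infty$. Rescale by $r_n t_n$ and pass to the cone. The limit path has both endpoints in a single ultralimit piece $S_\omega$ but escapes $S_\omega$, contradicting the fact that geodesics between two points of a piece in a tree-graded space must lie inside that piece. For part (\ref{thm:abstract_coarsely_isolated}), if distinct $S_1^n, S_2^n \in \Sc$ share points $x_n, y_n$ in $\Nc_X(\cdot, r)$ with $\dist_X(x_n, y_n) \to \infty$, rescaling by $\dist_X(x_n, y_n)$ produces two ultralimit pieces intersecting in at least two distinct points (the limits of $x_n$ and $y_n$, which are at unit distance in the cone), contradicting the one-point intersection axiom for distinct pieces.

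The main technical obstacle in each argument is ensuring that ultralimits of sequences of pieces are \emph{genuine} pieces of $X_\omega$ (neither all of $X_\omega$ nor collapsed to points), and that distinct chosen sequences of pieces give distinct ultralimits when the failure is robust; these verifications are the substantive content of~\cite{DS2005} and I would invoke them rather than re-derive them.
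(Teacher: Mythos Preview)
Your proposal is correct and matches the paper's approach exactly: the paper does not prove this theorem at all, but simply states it with citations to the relevant results in Dru\c{t}u--Sapir~\cite{DS2005}, so your first sentence (``quote~\cite{DS2005}'') is precisely what the paper does. Your additional sketches of the asymptotic-cone arguments go beyond what the paper provides, but they are accurate outlines of how the original proofs in~\cite{DS2005} proceed.
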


\subsection{Immediate consequences in convex projective geometry} As an immediate consequence of the general theory of relatively hyperbolic spaces we have the following.

\begin{proposition} 
\label{prop:immediate_consequences_of_DS05}
Suppose that $(\Omega, \Cc, \Gamma)$ is a naive convex co-compact triple and $\Xc$ is a $\Gamma$-invariant collection of closed unbounded convex subsets of $\Cc$. If $(\Cc, \dist_\Omega)$ is relatively hyperbolic with respect to $\Xc$, then $\Xc$ is a peripheral family of $(\Omega, \Cc, \Gamma)$ (i.e. $\Gamma$-invariant, strongly isolated, and coarsely contains the properly embedded simplices of $\Cc$).
\end{proposition}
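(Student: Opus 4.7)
The plan is a direct translation: $\Gamma$-invariance is already part of the hypothesis, so I only need to verify strong isolation and the coarse containment of properly embedded simplices. Both will be deduced by applying Theorem~\ref{thm:rh_ds} to the relatively hyperbolic space $(\Cc, \dist_\Omega)$ with the collection $\Xc$, and then transferring the conclusions from $\Cc$ to $\Omega$ via the observation that $\dist_\Omega|_{\Cc \times \Cc}$ coincides with the intrinsic Hilbert metric on $\Cc$ and that $\Nc_\Cc(\cdot,\cdot) \subset \Nc_\Omega(\cdot,\cdot)$.

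For strong isolation, I would fix $r > 0$ and set $D := D(2r)$ from Theorem~\ref{thm:rh_ds}\eqref{thm:abstract_coarsely_isolated}. Given distinct $X_1, X_2 \in \Xc$ and points $p, q \in \Nc_\Omega(X_1,r) \cap \Nc_\Omega(X_2,r)$, I choose $x_p \in X_1$ with $\dist_\Omega(p,x_p)<r$ and $x_q \in X_1$ with $\dist_\Omega(q,x_q)<r$. The triangle inequality then places $x_p, x_q$ in $\Nc_\Cc(X_1,2r) \cap \Nc_\Cc(X_2,2r)$, and Theorem~\ref{thm:rh_ds}\eqref{thm:abstract_coarsely_isolated} bounds $\dist_\Omega(x_p, x_q) \leq D$. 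Hence $\dist_\Omega(p,q) \leq 2r + D$, so the choice $D_1(r) := 2r + D$ works.

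For coarse containment of properly embedded simplices, fix a properly embedded simplex $S \subset \Cc$ of dimension $k \geq 2$. Proposition~\ref{prop:simplex_quasi_isometric_to_R} (together with the subsequent remark) supplies an $(A_k,B_k)$-quasi-isometry between $(S,\dist_\Omega)$ and $\Rb^k$ whose constants depend only on $k$; composing with the inclusion $S \hookrightarrow \Cc$ yields an $(A_k,B_k)$-quasi-isometric embedding $\Rb^k \hookrightarrow \Cc$. Theorem~\ref{thm:rh_ds}\eqref{thm:rh_embeddings_of_flats} then produces $M_k := M(A_k,B_k)$ and some $X \in \Xc$ with $S \subset \Nc_\Cc(X,M_k) \subset \Nc_\Omega(X,M_k)$. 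Since $\dim S \leq d-1$, setting $D_2 := \max_{2 \leq k \leq d-1} M_k$ gives a uniform constant as required.

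There is no genuine obstacle here; the proposition is essentially a repackaging of the general structural theorems of Dru{\c t}u--Sapir in the convex-projective setting. The only real bookkeeping is the distinction between neighborhoods computed in $\Omega$ versus in $\Cc$, which is handled uniformly by the inclusion $\Nc_\Cc(\cdot,\cdot) \subset \Nc_\Omega(\cdot,\cdot)$, and the observation that the quasi-isometry constants for properly embedded simplices depend only on the (uniformly bounded) dimension.
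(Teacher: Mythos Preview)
Your proposal is correct and follows exactly the same approach as the paper: strong isolation from Theorem~\ref{thm:rh_ds}\eqref{thm:abstract_coarsely_isolated}, and coarse containment of simplices from Proposition~\ref{prop:simplex_quasi_isometric_to_R} combined with Theorem~\ref{thm:rh_ds}\eqref{thm:rh_embeddings_of_flats}. In fact you are more careful than the paper's one-line proof, since you explicitly address the passage from $\Nc_\Cc$ to $\Nc_\Omega$ (via projecting to $X_1$) and the uniformity of $D_2$ over the finitely many possible simplex dimensions.
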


\begin{proof} Theorem \ref{thm:rh_ds} part \ref{thm:abstract_coarsely_isolated} implies that $\Xc$ is strongly isolated. Proposition~\ref{prop:simplex_quasi_isometric_to_R} and Theorem \ref{thm:rh_ds} part \ref{thm:rh_embeddings_of_flats} imply that $\Xc$  coarsely contains the properly embedded simplices of $\Cc$.
\end{proof}

\subsection{Yaman's characterization}
\label{sec:yaman_creiteria}
 In this section we recall Yaman's characterization of relatively hyperbolic groups in terms of convergence group actions \cite{PT1998, Y2004}. 

Recall that when $M$ is a non-empty compact Hausdorff metrizable space, a subgroup $G \leq \Homeo(M)$ is called a \emph{convergence group} if for every sequence $\{g_n\}$ of distinct elements in $G$ there exists a subsequence $\{g_{n_j}\}$ and points $x,y \in M$ such that $g_{n_j}|_{M - \{y\}}$ converges locally uniformly to the constant map $x$. For such a subgroup:
\begin{enumerate}
\item The \emph{limit set} $\Lc(G) \subset M$ is the set of points $x \in M$ where there exist $y \in M$ and a sequence $\{g_n\}$ in $G$ where $g_n|_{M-\{y\}}$ converges locally uniformly to the constant map $x$.
\item A point $x \in \Lc(G)$ is a \emph{conical limit point} if there exist distinct points $a,b \in M$ and a sequence of elements $\{ g_n\}$ in $G$ where $\lim_{n \rightarrow \infty} g_n(x) = a$ and $\lim_{n \rightarrow \infty} g_n(y) = b$ for all $y \in M -\{x\}$. 
\item A point $x \in \Lc(G)$ is a \emph{parabolic point} if $\Stab_G(x)$ is infinite and non-loxodromic (i.e. $x$ is the unique fixed point in $\Lc(G)$ for every $g \in \Stab_G(x)$ of infinite order).
\item A parabolic point $x \in \Lc(G)$ is a \emph{bounded parabolic point} if $\Stab_G(x)$ acts co-compactly on $\Lc(G)-\{x\}$. 
\end{enumerate}

Finally, we say that a convergence group $G \leq \Homeo(M)$ is \emph{geometrically finite} if $\Lc(G)$ is a non-empty perfect set (i.e. $\#\Lc(G) \geq 3$) and every point in $\Lc(G)$ is either a conical limit point or a bounded parabolic point.  

\begin{theorem}[Yaman~{\cite[Theorem 0.1]{Y2004}}] 
\label{thm:yaman_criteria}
Suppose that $G \leq {\rm Homeo}(M)$ is a geometrically finite convergence group and $B \subset \Lc(G)$ is the set of bounded parabolic fixed points. If $B$ has finitely many $G$-orbits and ${\rm Stab}_G(b):=\{h \in G : h(b) = b\}$ is finitely generated for every $b \in B$, then:
\begin{enumerate}
\item $G$ is relatively hyperbolic with respect to $\Pc=\{\Stab_G(p_1),\dots, \Stab_G(p_m)\}$ where $B=\sqcup_{i=1}^mG(p_i)$, and 
\item the Bowditch boundary $\partial(G,\Pc)$ is equivariantly homeomorphic to $\Lc(G)$. 
\end{enumerate}
\end{theorem}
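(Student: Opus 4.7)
The plan is to verify Bowditch's characterization of relative hyperbolicity via geometrically finite convergence actions. Bowditch proved that a finitely generated group $G$ is relatively hyperbolic with respect to a collection $\Pc$ of subgroups if and only if $G$ admits a geometrically finite convergence action on a perfect compact metrizable space $M$ whose maximal parabolic subgroups form exactly the $G$-conjugates of the elements of $\Pc$, and in this case $M$ is equivariantly homeomorphic to the Bowditch boundary $\partial(G,\Pc)$. Since the hypotheses already furnish such an action on $\Lc(G)$ (which is perfect by the definition of geometric finiteness adopted here), Yaman's theorem reduces to realizing the convergence action as coming from a proper, isometric, geometrically finite action on a Gromov hyperbolic space with Gromov boundary $\Lc(G)$.

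First I would follow Bowditch and construct a \emph{fine hyperbolic graph} $K$ from the convergence data. The vertex set consists of the bounded parabolic fixed points $B$ together with a $G$-orbit of ``thick'' vertices with finite stabilizer, and edges encode an adjacency relation coming from the convergence dynamics on $\Lc(G)$. The hypotheses (finite $G$-orbits of $B$, finite generation of each $\Stab_G(b)$, and cocompactness of each $\Stab_G(b)$ on $\Lc(G)\setminus\{b\}$) would then translate into $K$ being connected, $G$-cofinite on edges, Gromov hyperbolic, and \emph{fine} (only finitely many embedded circuits of any given length through each edge). Attaching combinatorial horoballs at every vertex in $B$, following Groves--Manning, then produces a proper Gromov hyperbolic space $\wh{K}$ on which $G$ acts properly discontinuously and isometrically, with cocompact action on the complement of any sufficiently large $G$-equivariant family of disjoint horoballs. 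This matches the Groves--Manning characterization of relative hyperbolicity with respect to $\Pc$, giving (1), and identifying $\partial\wh{K}$ with $\Lc(G)$ via radial limit points and parabolic fixed points then gives (2).

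The main obstacle is verifying hyperbolicity and fineness of $K$ simultaneously. Both properties rest on fine quantitative control of the convergence dynamics near parabolic points: pairs of quasi-geodesics in $K$ that avoid horoballs must fellow-travel in a way that reflects the expansion of the convergence action on $\Lc(G)$, and the only way a quasi-geodesic can approach a bounded parabolic point $b$ is through the coset structure of $\Stab_G(b)$. This translation between topological dynamics on $\Lc(G)$ and combinatorial geometry on $K$ is the technical heart of the argument, and it uses the convergence, perfectness, bounded parabolicity, and finite generation hypotheses together in an interlocking manner. One must also verify that no extra ``ghost'' boundary points appear in $\partial\wh{K}$ beyond those predicted by $\Lc(G)$, which amounts to a density argument using the classification into conical and bounded parabolic points.
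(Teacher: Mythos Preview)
The paper does not prove this theorem at all: it is quoted from Yaman~\cite{Y2004} as a black-box input and used in Section~\ref{sec:proof_main} to deduce relative hyperbolicity of $\Gamma$ from the convergence action on $\qcc$. There is therefore nothing in the paper to compare your proposal against.

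As for the proposal itself, it is a plausible high-level outline but it blends two distinct programs. Bowditch's criterion via fine hyperbolic graphs and Groves--Manning's cusped-space construction are separate characterizations of relative hyperbolicity; one does not normally build the fine graph $K$ first and then attach combinatorial horoballs to it. Yaman's original argument goes through Bowditch's topological characterization (constructing an annulus system on $\Lc(G)$ and the associated hyperbolic quasi-metric), not through Groves--Manning. If you actually wanted to write this proof out, you would need to commit to one route and supply the genuine technical content---in particular the construction of the annulus system and the verification of Bowditch's axioms, or a careful translation of the convergence hypotheses into the Groves--Manning framework---rather than invoke both frameworks at the level of slogans. What you have written is closer to a literature map than a proof.
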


We also note that if $\Gamma$ is relatively hyperbolic with respect to $\Pc$, then $\Gamma$ acts as a geometrically finite convergence group on the Bowditch boundary.

\section{Finding properly embedded simplices}\label{sec:finding properly embedded simplices}

For convex co-compact groups, if there is a line segment in the ideal boundary of the convex hull, then there is a properly embedded simplex in the convex hull~\cite[Theorem 1.15]{DGF2017}. This fails for naive convex co-compact subgroups, see~\cite[Section 2.3]{IZ2019b}. Instead we will show that if there exists a line segment in the ideal boundary which is sufficiently long (relative to the Hilbert metric of the face containing it), then there exists a properly embedded simplex. 

In the following proposition, $\dist_{\Pb}$ denotes some distance on $\Pb(\Rb^d)$ induced by a Riemannian metric. 

\begin{proposition}\label{prop:finding_simplices} Suppose that $(\Omega, \Cc, \Gamma)$ is a naive convex co-compact triple and $q \in \Cc$. For any $r, \epsilon > 0$ there exists $L=L(q,r,\epsilon) \geq 0$ with the following property: if
\begin{enumerate}
\item $a,b \in \partiali\Cc$ are contained in a boundary face $F$ of $\Omega$ with $\dist_F(a,b) \geq L$, 
\item $m \in (a,b)$ is the midpoint of $[a,b]$ relative to $\dist_F$, and
\item $p \in \Pb(\Span\{ a,b,q\}) \cap \Cc$ is sufficiently close to $m$ (in the metric $\dist_{\Pb}$), 
\end{enumerate}
then there exists a properly embedded simplex $S \subset \Cc$ of dimension at least two with 
\begin{align*}
\Pb(\Span\{ a,b,q\}) \cap \Bc_\Omega(p,r) \subset \Nc_\Omega(S, \epsilon).
\end{align*}
\end{proposition}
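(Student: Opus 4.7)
The plan is to argue by contradiction. Suppose the conclusion fails for some $r, \epsilon > 0$. Then there exist sequences $a_n, b_n \in \partiali \Cc$ contained in boundary faces $F_n$ of $\Omega$ with $\dist_{F_n}(a_n, b_n) \geq n$, midpoints $m_n$, and points $p_n \in \Pb(\Span\{a_n, b_n, q\}) \cap \Cc$ with $\dist_\Pb(p_n, m_n) \to 0$, yet no properly embedded simplex $S \subset \Cc$ of dimension $\geq 2$ realizes the desired containment. Using that $\Gamma$ acts co-compactly on $\Cc$, pick $h_n \in \Gamma$ with $p_n' := h_n p_n$ in a fixed compact set $K \subset \Cc$, and set $a_n' = h_n a_n$, $b_n' = h_n b_n$, $m_n' = h_n m_n$, $q_n' = h_n q$. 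Because $h_n$ restricts to a Hilbert isometry on each face, $\dist_{h_n F_n}(a_n', b_n') \geq n$ and $m_n'$ remains the Hilbert midpoint of $[a_n', b_n']$ inside $h_n F_n$.

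After passing to subsequences, I may assume $p_n' \to p' \in K$, $a_n' \to a'$, $b_n' \to b'$, $m_n' \to m' \in \partial \Omega$, $q_n' \to q'$, and $h_n \to T$ in $\Pb(\End(\Rb^d))$. Proposition~\ref{prop:dynamics_of_automorphisms_1} yields $\Pb(\ker T) \cap \Omega = \emptyset$ and $\mathrm{image}(T) \subset \Span F_\Omega(x)$, where $x = \lim h_n p_0$ for any fixed $p_0 \in \Omega$. Since the Hilbert metric on each face is proper, the divergence $\dist_{h_n F_n}(a_n', b_n') \to \infty$ forces $a'$ and $b'$ to lie on the ideal boundary of $F_\Omega(m')$, so $[a', b']$ is a nontrivial line segment in $\partial \Omega$ passing through $m'$, with $a', b'$ lying in faces strictly lower-dimensional than $F_\Omega(m')$. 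Moreover, since $p' \in K$ is interior to $\Omega$ and lies on $\Pb(\Span\{a', b', q'\})$ while $m'$ lies on $\partial \Omega$, the plane $\Pi := \Pb(\Span\{a', b', q'\})$ is genuinely $2$-dimensional and $\Omega_\Pi := \Omega \cap \Pi$ is a properly convex region with $p'$ in its interior and $[a', b'] \subset \partial \Omega_\Pi$.

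The crux is to exhibit, inside $\Omega_\Pi$, a properly embedded $2$-simplex $S' \subset \overline{\Cc}$ with $p' \in S'$ and vertices in $\partiali\Cc$. Two vertices are delivered by the limit segment, one in each of the faces $F_\Omega(a')$ and $F_\Omega(b')$; a third vertex will be obtained through a secondary limiting procedure using $\Gamma$-cocompactness together with Proposition~\ref{prop:dynamics_of_automorphisms_2}, which locates a boundary point off the line through $a'$ and $b'$. Given such an $S'$, its image under $h_n^{-1}$ yields, for all sufficiently large $n$, a properly embedded simplex $S_n \subset \Cc$ of dimension $\geq 2$; Proposition~\ref{prop:Hausdorff_distance_between_ch} (applied face-by-face, since the vertices of $S_n$ converge to those of $S'$ in the appropriate limit faces) then controls the Hausdorff distance between $S_n$ and $\Pb(\Span\{a_n, b_n, q\}) \cap \Bc_\Omega(p_n, r)$, producing the $\epsilon$-neighborhood inclusion and the contradiction. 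The principal obstacle will be the construction of the third vertex and the verification that $S'$ is genuinely $2$-dimensional and properly embedded in $\Omega$ (rather than collapsing into the boundary segment $[a', b']$); this requires exploiting the interior position of $p'$ and quantitatively tracking how long segments in boundary faces force flat simplex structures to appear nearby, via a careful analysis of the limit map $T$ and the dynamics of the sequence $\{h_n\}$.
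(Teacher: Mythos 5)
Your overall strategy---negate the statement, rescale by cocompactness of the $\Gamma$-action on $\Cc$, and extract a limit simplex---is the same as the paper's, but there is a genuine gap exactly where you flag ``the principal obstacle'': the construction of the third ideal vertex is not a deferred technicality but the heart of the argument, and your proposal contains no mechanism for it. The paper's proof hinges on a quantitative claim that is absent from yours: for each $n$ one can choose the bad point $p_{n}$ among the points converging to $m_n$ so that
\[
\dist_\Omega\big(p_{n},\, (a_n,q]\cup[q,b_n)\big) \;\geq\; n/2.
\]
This is proved using precisely the midpoint hypothesis together with lower semicontinuity of the Hilbert metric (Proposition~\ref{prop:asymp_sequences}): if points $a_{n,j}\in(a_n,q]$ stayed within $n/2$ of $p_{n,j}\to m_n$, then necessarily $a_{n,j}\to a_n$, whence $\liminf_j \dist_\Omega(p_{n,j},a_{n,j})\geq \dist_{F_n}(m_n,a_n)>n/2$, a contradiction. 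This estimate is what forces, after translating by group elements and passing to limits, \emph{both} segments $[a_\infty,q_\infty]$ and $[q_\infty,b_\infty]$ (and not merely $[a_\infty,b_\infty]$) into $\partial\Omega$, hence into $\partiali\Cc$; only then do $a_\infty,b_\infty,q_\infty$ span a properly embedded two-dimensional simplex containing the interior point $p_\infty\in\Cc$. Without it, the rescaled limit configuration can have the sides $[a',q']$ and $[q',b']$ crossing $\Omega$ (for instance when the bad points hug the side $(a_n,q]$), and no convergence-dynamics statement can then manufacture a third ideal vertex: Propositions~\ref{prop:dynamics_of_automorphisms_1} and~\ref{prop:dynamics_of_automorphisms_2} control images of faces and kernels of limit endomorphisms, not whether a specific segment through $q'$ lies in $\partial\Omega$.

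There are two further unjustified steps in your limiting procedure. First, projective maps do not preserve $\dist_{\Pb}$, so $h_n m_n$ need not stay close to $h_n p_n\in K$; your limit point $m'$ therefore has no a priori relation to $p'$, and the picture of $p'$ sitting over the midpoint of $[a',b']$ is not available. Second, nothing prevents $a'=b'$: the segments $[a_n',b_n']$ can collapse projectively even while their face-Hilbert lengths diverge, so ``$[a',b']$ is a nontrivial line segment through $m'$'' is unproven. The paper's argument sidesteps both issues: it never uses the limit of the midpoints, and nondegeneracy of the limit triangle is automatic because its convex hull contains $p_\infty\in\Omega$ while all three sides lie in $\partial\Omega$. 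Finally, your appeal to Proposition~\ref{prop:Hausdorff_distance_between_ch} for the $\epsilon$-inclusion is not needed: once the simplex $S$ exists, a direct compactness argument gives $\Pb(\Span\{\gamma_n a_n,\gamma_n b_n,\gamma_n q\})\cap\Bc_\Omega(\gamma_n p_{n},r)\subset \Nc_\Omega(S,\epsilon)$ for $n$ large, and pulling back by $\gamma_n^{-1}$ contradicts the badness of $p_{n}$. To repair the proposal you must prove the distance-to-the-sides claim displayed above; the remainder then follows the route you outlined.
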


\begin{proof} Suppose for a contradiction that the proposition is false for some choice of $r,\epsilon > 0$. Then for every $n \in \Nb$ there exist $a_n, b_n \in \partiali \Cc$ and a sequence $\{p_{n,j}\}_{j \geq 1}$ where 
\begin{enumerate}
\item $a_n,b_n$ are contained in a boundary face $F_n$ and $\dist_{F_n}(a_n,b_n) > n$, 
\item if $m_n \in (a_n,b_n)$ is the midpoint of $[a_n,b_n]$ relative to $\dist_{F_n}$, then $m_n = \lim_{j \rightarrow \infty} p_{n,j}$, 
\item $p_{n,j} \in \Pb(\Span\{ a_n,b_n,q\}) \cap \Cc$ for all $j \geq 1$, and 
\item $\Pb(\Span\{ a_n,b_n,q\}) \cap \Bc_\Omega(p_{n,j},r)$ is not contained in the $\epsilon$-neighborhood of any properly embedded simplex in $\Cc$ of dimension at least two. 
\end{enumerate}

We first claim that for every $n$ there exists $j_n$ such that 
\begin{align}
\label{eqn:dist_to_sides} 
\dist_\Omega( p_{n,j_n}, (a_n, q] \cup [q,b_n) ) \geq n/2.
\end{align}
 Fix $n$ and suppose not. Then for every $j \in \Nb$ we can find $a_{n,j} \in (a_n, q]$ and  $b_{n,j} \in (b_n, q]$ with 
 \begin{align*}
\dist_\Omega\Big(p_{n,j},\{a_{n,j}, b_{n,j}\}\Big) \leq  n/2.
 \end{align*}
 Since $\lim_{j \rightarrow \infty} p_{n,j}=m_n$, we must have $\lim_{j \rightarrow \infty} a_{n,j} = a_n$ and $\lim_{j \rightarrow \infty} b_{n,j} = b_n$. Further, by the definition of the Hilbert metric  
   \begin{align*}
n/2 \geq \limsup_{j \rightarrow \infty} \dist_\Omega\Big(p_{n,j},\{a_{n,j}, b_{n,j}\}\Big) \geq \dist_{F_n}\Big(m_{n},\{a_{n}, b_{n}\}\Big) > n/2.
 \end{align*}
 So we have a contradiction and hence for each $n$ such a $j_n$ exists.

After possibly passing to a subsequence we can find a sequence $\{\gamma_n\}$ in $\Gamma$ such that as $n \to \infty$,
\begin{align*}
\gamma_n(p_{n,j_n}) \rightarrow p_\infty \in \Cc,& \quad \gamma_n(a_n) \rightarrow a_\infty\in \overline{\Cc}, \quad \gamma_n(b_n) \rightarrow b_\infty\in \overline{\Cc}, \\
&  \text{and} \quad \gamma_n(q) \rightarrow q_\infty\in \overline{\Cc}.
\end{align*}
By construction $[a_\infty,b_\infty] \subset \partiali\Cc$ and by Equation~\eqref{eqn:dist_to_sides} we see that
\begin{align*}
[a_\infty, q_\infty] \cup [q_\infty, b_\infty] \subset \partiali \Cc.
\end{align*}
Hence $a_\infty,b_\infty,q_\infty$ are the vertices of a properly embedded two-dimensional simplex $S \subset \Cc$ which contains $p_\infty$. So for $n$ large we have 
\begin{align*}
\Pb(\Span\{ \gamma_n(a_n),\gamma_n(b_n),\gamma_n(q)\}) \cap \Bc_\Omega(\gamma_n(p_{n,j_n}),r) \subset \Nc_\Omega(S, \epsilon),
\end{align*}
which implies that
\begin{align*}
\Pb(\Span\{ a_n,b_n,q\}) \cap \Bc_\Omega(p_{n,j_n},r) \subset \Nc_\Omega(\gamma_n^{-1}S, \epsilon).
\end{align*}
Thus we have a contradiction. 
\end{proof}

\section{Properties of peripheral families}\label{sec:properties of peripheral families}

\begin{theorem}  
\label{thm:boundary_faces}
Suppose that $(\Omega, \Cc, \Gamma)$ is a naive convex co-compact triple and $\Xc$ is a peripheral family of $(\Omega, \Cc, \Gamma)$ (i.e. $\Xc$ is $\Gamma$-invariant, strongly isolated, and coarsely contains all properly embedded simplices in $\Cc$). Then:
\begin{enumerate}
\item $\Gamma$ has finitely many orbits in $\Xc$.
\item If $X \in \Xc$, then $\Stab_{\Gamma}(X)$ acts co-compactly on $X$. In particular, $\Stab_{\Gamma}(X)$ is finitely generated. 
\item If $X_1, X_2 \in \Xc$ are distinct, then $F_\Omega(\partiali X_1) \cap F_\Omega(\partiali X_2) = \emptyset$. 
\item There exists $L > 0$ such that: if $x \in \partiali\Cc$ and $\diam_{F_\Omega(x)} \left( F_\Omega(x) \cap \partiali\Cc \right)\geq L$, then $x \in F_\Omega(\partiali X)$ for some $X \in \Xc$. 
\item There exists $R > 0$ such that: if $X \in \Xc$ and $x \in \partiali X$, then 
\begin{align*}
\dist_{F_\Omega(x)}^{\Haus}\left( F_\Omega(x) \cap \partiali \Cc, F_\Omega(x) \cap \partiali X\right) \leq R.
\end{align*}
\item If $X \in \Xc$, $x \in \partiali X$, and $[x,y] \subset \partiali\Cc$, then $[x,y] \subset F_\Omega(\partiali X)$. 
\item If $[x,y] \subset \partiali \Cc$, then either $F_{\Omega}(x)=F_{\Omega}(y)$, or there exists $X \in \Xc$ such that $x,y \in F_{\Omega}(\partiali X)$. 
\item If $X \in \Xc$, then $\Stab_{\Gamma}(X)$ acts co-compactly on 
$$
\CH_\Omega(F_\Omega(\partiali X) \cap \partiali \Cc) \cap \Omega.
$$
\end{enumerate}
\end{theorem}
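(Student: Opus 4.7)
The plan is to prove the eight items in a chain, bootstrapping from a preliminary local finiteness lemma and items (1)--(3). As a zeroth step, I would show that $\Xc$ is locally finite in the Hilbert metric: if distinct $X_n \in \Xc$ all met a compact $K \subset \Omega$, I would pick $x_n \in X_n \cap K$ and, using that each $X_n$ is convex and unbounded, produce points $y_n \in X_n$ with $\dist_\Omega(x_n, y_n) = N$; after extraction $[x_n,y_n] \to [x_\infty, y_\infty]$, and for $n \neq m$ large this limit segment lies within Hilbert distance $1$ of both $X_n$ and $X_m$, forcing $N \leq D_1(1)$, contradicting the choice $N > D_1(1)$. Item (1) then follows because every $X \in \Xc$ has a $\Gamma$-translate meeting a compact fundamental domain $K$ for $\Gamma \curvearrowright \Cc$. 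For (2), letting $\gamma_1,\dots,\gamma_k$ enumerate $\Gamma$-elements with $\gamma_iX$ meeting $K$, a standard argument shows $X \cap \bigcup_i \gamma_i^{-1}K$ is a compact fundamental domain for $\Stab_\Gamma(X) \curvearrowright X$; finite generation follows from proper co-compact isometric action on the proper geodesic space $(X,\dist_\Omega)$.

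For item (3), if $F_\Omega(\partiali X_1) \cap F_\Omega(\partiali X_2) \neq \emptyset$, Observation~\ref{obs:faces}(3) produces $x_i \in \partiali X_i$ lying in a common open face $F$. Pick $p_i \in X_i$; the rays $[p_i, x_i) \subset X_i$ are convex combinations. Applying Proposition~\ref{prop:Hausdorff_distance_between_lines} twice,
\[
\dist_\Omega^{\Haus}([p_1, x_1), [p_1, x_2)) \leq \dist_F(x_1, x_2) \quad\text{and}\quad \dist_\Omega^{\Haus}([p_1, x_2), [p_2, x_2)) \leq \dist_\Omega(p_1, p_2),
\]
so $[p_1, x_1)$ and $[p_2, x_2)$ are at bounded Hausdorff distance. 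Then for $r$ large, $\Nc_\Omega(X_1, r) \cap \Nc_\Omega(X_2, r)$ contains arbitrarily long subrays, contradicting strong isolation.

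Item (4) is the heart of the proof. Arguing by contradiction, suppose $x_n \in \partiali \Cc$ admit $a_n,b_n \in F_\Omega(x_n) \cap \partiali\Cc$ with $\dist_{F_\Omega(x_n)}(a_n, b_n) \to \infty$, yet no $x_n$ lies in $F_\Omega(\partiali X)$ for any $X \in \Xc$. Let $m_n$ be the $\dist_{F_\Omega(x_n)}$-midpoint of $[a_n, b_n]$, fix $q \in \Cc$, and let $p_n \in \Cc$ approximate $m_n$ in the projective 2-plane $\Pb(\Span\{a_n, b_n, q\})$. Apply $\gamma_n \in \Gamma$ so that $\gamma_n p_n$ lies in a compact fundamental domain and pass to subsequences with $\gamma_n a_n \to a_\infty$, $\gamma_n b_n \to b_\infty$, $\gamma_n q \to q_\infty$ in $\overline{\Cc}$. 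Proposition~\ref{prop:finding_simplices}, applied at each level $n$, yields properly embedded simplices $S_n$ of dimension at least two with $\gamma_n S_n$ converging to a properly embedded simplex $S_\infty$ whose vertices include $a_\infty, b_\infty, q_\infty$. Coarse containment of the peripheral family forces $S_n \subset \Nc_\Omega(X_n, D_2)$ for some $X_n \in \Xc$; local finiteness lets me extract so $\gamma_n X_n$ is constant equal to some $X_\infty$, and then $a_\infty, b_\infty \in F_\Omega(\partiali X_\infty)$. Pulling back gives $x_n \in F_\Omega(\partiali X_n)$ for large $n$, the desired contradiction.

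Items (5)--(8) are consequences. The qualitative half of (5) --- namely that every $y \in F \cap \partiali \Cc$ with $F = F_\Omega(x)$ satisfies $y \in F_\Omega(\partiali X)$ as soon as $\dist_F(x,y)$ exceeds the $L$ of item (4) --- follows by applying (4) to $y$ and using (3) to identify the resulting peripheral with $X$. Upgrading this to a uniform Hausdorff bound $R$ needs a compactness argument that I would run along the same lines as (4): if no uniform $R$ exists, translate a bad sequence $(X_n, x_n, y_n)$ into a fundamental domain and extract limits, producing in the limit either two distinct peripherals with overlapping ideal-boundary faces (contradicting (3)) or an infinite-diameter face segment forcing a fresh peripheral $X'$ by (4) that must coincide with the original (contradicting the assumed growth of $\dist_{F_n}(y_n, F_n \cap \partiali X_n)$). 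Item (6) then follows because each interior $w$ of $[x, y] \subset \partiali \Cc$ satisfies $(x,y) \subset F_\Omega(w)$, so $F_\Omega(w) \cap \partiali \Cc$ has infinite $\dist_{F_\Omega(w)}$-diameter, and (4)+(3) identify the resulting peripheral with $X$. Item (7) uses the same infinite diameter observation: when $F_\Omega(x) \neq F_\Omega(y)$, interior $w \in (x,y)$ has $F_\Omega(w) \cap \partiali \Cc$ of infinite diameter, so (4) produces $X \in \Xc$ with $w \in F_\Omega(\partiali X)$, and (6) propagates to the endpoints. Finally, for (8), the set $Y := \CH_\Omega(F_\Omega(\partiali X) \cap \partiali \Cc) \cap \Omega$ sits in $\Cc$ by convexity of $\overline{\Cc}$, item (5) combined with Proposition~\ref{prop:Hausdorff_distance_between_ch} shows $Y$ and $X$ are at bounded Hilbert-Hausdorff distance, and co-compactness of $\Stab_\Gamma(X)$ on $X$ from (2) transfers to $Y$. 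The main obstacles are item (4), where the midpoint-based simplex extraction must be coordinated with $\Gamma$-translation and local finiteness to identify a single $X_\infty$, and the quantitative upgrade in (5), where the compactness argument must track the escaping base point $x$ and the excess point $y$ simultaneously.
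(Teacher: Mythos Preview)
Your overall architecture matches the paper's: local finiteness first, then (1)--(3) by the standard arguments you describe, (4) as the crux via Proposition~\ref{prop:finding_simplices}, and (5)--(8) as consequences. Items (0)--(3) and (8) are essentially the paper's proofs.

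The real gap is in your argument for (4). Proposition~\ref{prop:finding_simplices} only guarantees that \emph{some} properly embedded simplex $S_p$ is $\epsilon$-close to the ball $\Bc_\Omega(p,r)$ in the $2$-plane; it says nothing about the vertices of $S_p$, so the assertion that ``$\gamma_n S_n$ converges to a simplex whose vertices include $a_\infty, b_\infty, q_\infty$'' is unjustified. Even if you bypass the proposition and argue directly (as in its proof) that the limit triangle $\CH_\Omega(a_\infty, b_\infty, q_\infty)$ is itself a properly embedded $2$-simplex and hence lies near some $X_\infty \in \Xc$, you still cannot ``pull back'': knowing $\lim_n \gamma_n m_n \in F_\Omega(\partiali X_\infty)$ does not give $\gamma_n m_n \in F_\Omega(\partiali X_\infty)$ for large $n$, because $F_\Omega(\partiali X_\infty)$ is not open in $\partial\Omega$.

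The paper's fix is to avoid translation entirely and instead track an entire \emph{ray} of points $p \in (m, q']$ approaching the midpoint $m$. For each such $p$, Proposition~\ref{prop:finding_simplices} yields a simplex $S_p$ and hence some $X_p \in \Xc$ with $\Bc_\Omega(p,r) \cap \Pb(\Span\{a,b,q\}) \subset \Nc_\Omega(X_p, D_2+1)$. Choosing $r$ large relative to the strong-isolation constant at scale $D_2+1$ forces $X_p$ to be independent of $p$ (overlapping balls), so the whole segment $(m, q']$ lies in $\Nc_\Omega(X, D_2+1)$ for a single $X$. Sending $p \to m$ and invoking Proposition~\ref{prop:asymp_sequences} produces a point of $\partiali X$ in $F_\Omega(m) = F_\Omega(x)$ at $\dist_F$-distance at most $D_2+1$ from $m$. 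This \emph{quantitative} conclusion is exactly what makes (5) immediate --- the paper takes $R = \max\{L, 2D_2+2\}$ with no further compactness argument --- and also dictates the order of (6) and (7): the paper proves (7) first (using (5) and Proposition~\ref{prop:asymp_sequences} to push from an interior point of $(x,y)$ out to the endpoints), and then (6) follows from (7) and (3).
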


For the rest of the section fix $\Omega$, $\Cc$, $\Gamma$, and $\Xc$ satisfying the hypothesis of the theorem. 

Since $\Xc$ coarsely contains all properly embedded simplices in $\Cc$, there exists $D_2 > 0$ such that: if $S \subset \Cc$ is a properly embedded simplex of dimension at least two, then there exists $X \in \Xc$ with 
\begin{align}
\label{eqn:D2-for-thm-bd-face}
S \subset \Nc_\Omega(X,D_2). 
\end{align}

\begin{lemma} \label{lem:finitely_many_simplices_in_compact_set}
If $K \subset \Cc$ is compact, then the set $\{ X \in \Xc : X \cap K \neq \emptyset\}$ is finite. \end{lemma}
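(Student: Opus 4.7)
The plan is to argue by contradiction using strong isolation. Suppose that there exist infinitely many distinct $X_1, X_2, \ldots \in \Xc$ with $X_n \cap K \neq \emptyset$. Choose $p_n \in X_n \cap K$ and, using compactness of $K$, pass to a subsequence so that $p_n \to p_\infty \in K \subset \Cc$. The goal is to manufacture one fixed long segment that lies in the $1$-neighborhood of infinitely many of the $X_n$, which will contradict the strong isolation bound $D_1(1)$.

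To construct this segment I will use that each $X_n$ is closed, convex, and unbounded in the proper geodesic metric space $(\Omega, \dist_\Omega)$. Fix $R > D_1(1)$. Since $X_n$ is path-connected (being convex) and $\dist_\Omega(p_n, \cdot)$ is continuous and unbounded on $X_n$, there exists $q_n \in X_n$ with $\dist_\Omega(p_n, q_n) = R$, and by convexity $[p_n, q_n] \subset X_n$. After a further subsequence, $q_n \to q_\infty \in \Omega$ with $\dist_\Omega(p_\infty, q_\infty) = R$.

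Because the endpoints $p_\infty, q_\infty$ lie in the interior $\Omega$ and the Hilbert metric depends continuously on pairs of points in $\Omega$, the segments $[p_n, q_n]$ converge to $[p_\infty, q_\infty]$ in the $\dist_\Omega$-Hausdorff distance. In particular, for all sufficiently large $n$,
\[
[p_\infty, q_\infty] \subset \Nc_\Omega([p_n,q_n], 1) \subset \Nc_\Omega(X_n, 1).
\]
Picking any two distinct sufficiently large indices $n, m$ then gives
\[
[p_\infty, q_\infty] \subset \Nc_\Omega(X_n, 1) \cap \Nc_\Omega(X_m, 1),
\]
and strong isolation forces $R = \diam_\Omega[p_\infty, q_\infty] \leq D_1(1)$, contradicting the choice of $R$.

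The only non-routine step is the Hausdorff convergence $[p_n, q_n] \to [p_\infty, q_\infty]$ in the Hilbert metric, and this is really the main thing to check. The point is that both limiting endpoints lie strictly inside $\Omega$, so the segments stay in a compact subset of $\Omega$ on which $\dist_\Omega$ is uniformly comparable to any background Riemannian metric; convergence in $\dist_\Omega$-Hausdorff distance is then immediate from convergence of the endpoints. After this the proof is a direct unpacking of the strong isolation hypothesis.
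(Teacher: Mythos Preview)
Your proof is correct and follows the same contradiction-via-strong-isolation strategy as the paper; the only difference is that you use bounded segments $[p_n,q_n]$ of fixed Hilbert length $R>D_1(1)$ inside $\Omega$, while the paper instead takes half-rays $[k_n,x_n)$ with $x_n\in\partiali X_n$ and lets the endpoints converge in $\partiali\Cc$. The Hausdorff-convergence step you flag is indeed routine here, and in fact follows immediately from Proposition~\ref{prop:Hausdorff_distance_between_lines}, since all four endpoints lie in $\Omega$ and hence share the common face $F_\Omega=\Omega$.
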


\begin{proof} Suppose not. Then there exist an infinite sequence of pairwise distinct elements $\{X_n\}$ in $\Xc$ where $X_n \cap K \neq \emptyset$ for all $n$. As each $X_n$ is unbounded,  we can fix $k_n \in X_n \cap K$ and $x_n \in \partiali X_n$. Passing to a subsequence we can suppose that $k_n \rightarrow k \in K$ and $x_n \rightarrow x \in \partiali\Cc$. Then
\begin{align*}
\liminf_{n,m\rightarrow \infty} & \diam_\Omega( \Nc_\Omega( X_n, 1) \cap  \Nc_\Omega( X_m, 1)) \\
& \geq \liminf_{n,m\rightarrow \infty} \diam_\Omega( \Nc_\Omega( [k_n,x_n), 1) \cap  \Nc_\Omega( [k_m,x_m), 1))= \infty
\end{align*}
and so by the strongly isolation property $X_n = X_m$ when $n,m$ are sufficiently large. So we have a contradiction. 
\end{proof}

\begin{lemma}[Part (1)]
$\Gamma$ has finitely many orbits in $\Xc$.
\end{lemma}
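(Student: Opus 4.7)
The plan is to combine the co-compactness of the $\Gamma$-action on $\Cc$ with the previously established Lemma~\ref{lem:finitely_many_simplices_in_compact_set}. The key observation is that every element of $\Xc$ is a (non-empty) subset of $\Cc$, so the co-compact quotient lets us translate each $X \in \Xc$ into a fixed compact region.

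First, using that $\Gamma \backslash \Cc$ is compact, I would fix a compact set $K \subset \Cc$ such that $\Gamma \cdot K = \Cc$. Given any $X \in \Xc$, pick a point $x \in X \subset \Cc$ (which exists since $X$ is unbounded, hence non-empty). By the choice of $K$, there exists $\gamma \in \Gamma$ with $\gamma^{-1} x \in K$, so the element $\gamma^{-1} X$, which lies in $\Xc$ by the $\Gamma$-invariance assumption, intersects $K$. Hence every $\Gamma$-orbit in $\Xc$ has a representative meeting $K$.

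Now I would apply Lemma~\ref{lem:finitely_many_simplices_in_compact_set} to the compact set $K$, which gives that $\{ Y \in \Xc : Y \cap K \neq \emptyset\}$ is finite. Since each orbit contributes at least one element to this finite set, the number of $\Gamma$-orbits in $\Xc$ is finite.

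There is no substantive obstacle in this step — the heart of the matter is really Lemma~\ref{lem:finitely_many_simplices_in_compact_set}, whose proof uses the strong isolation property applied to the unbounded rays $[k_n, x_n)$ emanating from a convergent sequence of base points. The present lemma is just the standard packaging of local finiteness plus a co-compactness argument.
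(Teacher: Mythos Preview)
Your argument is correct and essentially identical to the paper's: both fix a compact $K$ with $\Gamma\cdot K=\Cc$, translate each $X\in\Xc$ so that it meets $K$, and then invoke Lemma~\ref{lem:finitely_many_simplices_in_compact_set} to conclude that only finitely many elements of $\Xc$ meet $K$. The paper's write-up is slightly terser but the logic is the same.
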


\begin{proof}
Since $\Gamma$ acts co-compactly on $\Cc$, there exists a compact set $K\subset \Cc$ such that $\Gamma \cdot K =\Cc$. Then for each $X \in \Xc$ there exists $g \in \Gamma$ such that $K \cap gX  \neq \emptyset$. So by Lemma \ref{lem:finitely_many_simplices_in_compact_set}, there exist $X_1, \ldots, X_m$ such that $\Xc=\sqcup_{i=1}^m \Gamma \cdot X_i.$
\end{proof}

\begin{lemma}[Part (2)] If $X \in \Xc$, then $\Stab_{\Gamma}(X)$ acts co-compactly on $X$. 

\end{lemma}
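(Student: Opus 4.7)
The plan is to push arbitrary points of $X$ into a compact set using the co-compact action of $\Gamma$ on $\Cc$, and then use Lemma~\ref{lem:finitely_many_simplices_in_compact_set} together with $\Gamma$-invariance of $\Xc$ to arrange the pushing element to lie in $\Stab_\Gamma(X)$ at the cost of enlarging the compact set.

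Concretely, fix a compact set $K \subset \Cc$ with $\Gamma \cdot K = \Cc$, and let $Y_1,\dots,Y_N$ be the (finitely many, by Lemma~\ref{lem:finitely_many_simplices_in_compact_set}) elements of $\Xc$ meeting $K$. After relabelling, let $Y_1,\dots,Y_k$ be those $Y_i$ which lie in the $\Gamma$-orbit of $X$, and for each such $i$ choose $g_i \in \Gamma$ with $g_i X = Y_i$. Given any $x \in X$ pick $\gamma_x \in \Gamma$ with $\gamma_x x \in K$; then $\gamma_x X$ meets $K$, hence $\gamma_x X = Y_i$ for some $i \leq k$, so $h_x := g_i^{-1}\gamma_x \in \Stab_\Gamma(X)$ and $h_x x \in g_i^{-1}K$. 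Thus
\begin{align*}
X \subset \Stab_\Gamma(X) \cdot K', \quad \text{where} \quad K' := X \cap \bigcup_{i=1}^{k} g_i^{-1}K.
\end{align*}
Since $X$ is closed in $\Omega$ and each $g_i^{-1}K$ is compact in $\Omega$, $K'$ is a compact subset of $X$, proving co-compactness.

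For the ``in particular'' clause, observe that $(X,\dist_\Omega|_X)$ is a proper geodesic metric space: it inherits properness from $(\Omega,\dist_\Omega)$ (since $X$ is closed in $\Omega$) and is geodesic because projective line segments between points of $X$ are both Hilbert geodesics and contained in $X$ by convexity. The $\Stab_\Gamma(X)$-action on $X$ is isometric, proper (inherited from the proper action of $\Gamma$ on $\Omega$) and, by the previous paragraph, co-compact. The \v{S}varc--Milnor lemma then yields that $\Stab_\Gamma(X)$ is finitely generated and quasi-isometric to $X$.

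The only step with any subtlety is the passage from ``$\gamma_x X$ meets $K$'' to ``we may take $\gamma_x \in \Stab_\Gamma(X)$''; this is where the combination of $\Gamma$-invariance of $\Xc$ and the finiteness statement of Lemma~\ref{lem:finitely_many_simplices_in_compact_set} is essential. Once this is in hand, the rest is routine.
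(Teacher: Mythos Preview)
Your proof is correct and follows essentially the same approach as the paper's: both use the co-compactness of $\Gamma$ on $\Cc$ to push points of $X$ into a fixed compact $K$, then invoke Lemma~\ref{lem:finitely_many_simplices_in_compact_set} to reduce to finitely many correcting elements, yielding a compact fundamental set for $\Stab_\Gamma(X)$ on $X$. The only cosmetic difference is that the paper indexes by the set $\Gc = \{ g \in \Gamma : X \cap gK \neq \emptyset\}$ and its finitely many $\Stab_\Gamma(X)$-cosets, whereas you index directly by the finitely many $\Gamma$-translates of $X$ meeting $K$; the ``in particular'' clause is handled identically via the \v{S}varc--Milnor lemma.
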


\begin{proof}
This argument is standard, see~\cite[Proposition 4.0.4]{W1996},~\cite[Theorem 3.7]{H2005}, or ~\cite[Section 3.1]{HK2005}. 

Fix a compact set $K \subset \Cc$ such that $\Gamma \cdot K = \Cc$. Let 
$$
\mathcal{G} = \{ g \in \Gamma : X \cap gK  \neq \emptyset\}.
$$
Then $X \subset \mathcal{G} \cdot K$. Also, by the previous lemma 
$$
\{ g^{-1} X : g \in \mathcal{G} \} = \{ h_1^{-1}X,\dots, h_m^{-1}X\}
$$
for some $h_1,\dots,h_m \in \mathcal{G}$. Notice that $g^{-1}X = h_j^{-1} X$ if and only if $gh_j^{-1} \in \Stab_\Gamma(X)$,  hence 
$$
\Gc = \cup_{j=1}^m  \Stab_\Gamma(X)h_j.
$$ 
Finally, if $\hat{K} := \cup_{j=1}^m h_j K$, then $\hat{K}$ is compact and $X \subset \Stab_\Gamma(X) \cdot \hat{K}$. So $X = \Stab_\Gamma(X) \cdot (X \cap \hat{K})$. As $X$ is closed, $X \cap \hat{K}$ is compact and thus $\Stab_{\Gamma}(X)$ acts co-compactly on $X$.

Now, since $X$ is convex, the metric space $(X, \dist_\Omega)$ is geodesic. So by the fundamental lemma of geometric group theory $\Stab_{\Gamma}(X)$ is finitely generated. 
\end{proof}

\begin{lemma}[Part (3)]\label{lem:intersecting_faces} If $X_1, X_2 \in \Xc$ are distinct, then $F_\Omega(\partiali X_1) \cap F_\Omega(\partiali X_2) = \emptyset$.  \end{lemma}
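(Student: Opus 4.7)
The plan is to argue by contradiction. Assume that $F_\Omega(\partiali X_1) \cap F_\Omega(\partiali X_2) \neq \emptyset$. By Observation~\ref{obs:faces}(3), open faces of $\Omega$ partition $\overline{\Omega}$ into equivalence classes, so a point in the intersection must lie in a common open face $F$ shared by some $x_1 \in \partiali X_1$ and $x_2 \in \partiali X_2$. That is, we obtain $x_1 \in \partiali X_1$ and $x_2 \in \partiali X_2$ with $F_\Omega(x_1) = F_\Omega(x_2) = F$.

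The key step is to exhibit an unbounded subset of $\Nc_\Omega(X_1,K) \cap \Nc_\Omega(X_2,K)$ for some $K > 0$, directly contradicting the strong isolation property. Pick any base points $p_i \in X_i$. Since each $X_i$ is a closed convex subset of $\Omega$ containing $p_i$ and having $x_i$ in its closure, the ray $[p_i, x_i) \subset \Omega$ lies entirely in $X_i$ (approximate $x_i$ by points of $X_i$, take convex combinations with $p_i$, and pass to the limit inside the closed set $X_i$). Now apply Proposition~\ref{prop:Hausdorff_distance_between_lines} to the quadruple $(p_1, x_1, p_2, x_2)$: we have $(p_1, x_1) \subset \Omega$, $F_\Omega(p_1) = F_\Omega(p_2) = \Omega$, and $F_\Omega(x_1) = F_\Omega(x_2) = F$ by construction, which yields
$$
\dist^{\Haus}_\Omega\bigl([p_1,x_1] \cap \Omega,\, [p_2,x_2]\cap\Omega\bigr) \leq K := \max\bigl\{\dist_\Omega(p_1,p_2),\ \dist_F(x_1,x_2)\bigr\} < \infty.
$$
In particular $[p_1, x_1) \subset \Nc_\Omega(X_2, K)$, and trivially $[p_1, x_1) \subset X_1 \subset \Nc_\Omega(X_1, K)$, so
$$
[p_1, x_1) \subset \Nc_\Omega(X_1, K) \cap \Nc_\Omega(X_2, K).
$$

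Since $x_1 \in \partial\Omega$ and the Hilbert metric is complete, $\diam_\Omega[p_1, x_1) = +\infty$. This contradicts strong isolation, which forces this diameter to be at most $D_1(K) < \infty$. Hence no such pair of faces can coincide and $F_\Omega(\partiali X_1) \cap F_\Omega(\partiali X_2) = \emptyset$. I do not anticipate any significant obstacle; the only point requiring a moment of care is the verification that the rays $[p_i, x_i)$ are contained in $X_i$, but this is a direct consequence of $X_i$ being closed in $\Omega$ and convex.
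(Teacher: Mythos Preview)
Your proof is correct and follows essentially the same approach as the paper's: pick $x_i \in \partiali X_i$ sharing a face, pick base points $p_i \in X_i$, apply Proposition~\ref{prop:Hausdorff_distance_between_lines} to the two rays $[p_i,x_i)$, and contradict strong isolation. The only cosmetic slip is that the Hausdorff bound gives $\dist_\Omega\bigl(\cdot,[p_2,x_2)\bigr)\le K$, so strictly speaking you get $[p_1,x_1)\subset\Nc_\Omega(X_2,r)$ for any $r>K$ rather than for $r=K$; the paper handles this by taking $r>\max\{\dist_\Omega(p_1,p_2),\dist_F(x_1,x_2)\}$.
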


\begin{proof} Suppose $X_1, X_2 \in \Xc$ and $x \in F_\Omega(\partiali X_1) \cap F_\Omega(\partiali X_2)$. Then there exists $x_1 \in \partiali X_1$ and $x_2 \in \partiali X_2$ with 
\begin{align*}
x_1, x_2 \in F_\Omega(x).
\end{align*}
Fix $q_1 \in X_1$ and $q_2 \in X_2$. Then Proposition~\ref{prop:Hausdorff_distance_between_lines} implies that
\begin{align*}
\dist_\Omega^{\Haus}\Big( [q_1, x_1), [q_2, x_2) \Big) \leq \max\left\{\dist_\Omega(q_1,q_2), \dist_{F_\Omega(x)}(x_1, x_2)\right\}.
\end{align*}
So for $r > \max\left\{\dist_\Omega(q_1,q_2), \dist_{F_\Omega(x)}(x_1, x_2)\right\}$, we have 
\begin{align*}
\diam_\Omega \left( \Nc_\Omega(X_1, r) \cap \Nc_\Omega(X_2, r) \right) = \infty.
\end{align*}
Since $\Xc$ is strongly isolated, then $X_1 = X_2$. 
\end{proof}

\begin{lemma}\label{lem:segments_in_bd} There exists $L > 0$ such that: if
\begin{enumerate}
\item $a,b \in \partiali \Cc$ are in the same boundary face $F$ of $\Omega$, 
\item $\dist_F(a,b) \geq L$, and
\item $m$ is the midpoint of $[a,b]$ with respect to $\dist_F$,
\end{enumerate}
then there exists $X \in \Xc$ and $x \in \partiali X \cap F$ such that 
\begin{align*}
\dist_{F}(m,x) \leq D_2+1. 
\end{align*}
\end{lemma}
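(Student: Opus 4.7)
The plan is to apply Proposition~\ref{prop:finding_simplices} along a sequence of interior points $p_n \in \Cc$ approaching $m$ along a fixed geodesic ray, use strong isolation of $\Xc$ to force the elements $X_n \in \Xc$ it produces to eventually coincide with a single $X$, and then obtain the required point of $\partiali X \cap F$ close to $m$ as a subsequential limit of nearby interior points of $X$.

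More concretely, fix $q_0 \in \relint(\Cc)$, let $D_1(\cdot)$ be the strong-isolation function of $\Xc$, set $R := D_1(D_2+1) + 2$, and apply Proposition~\ref{prop:finding_simplices} with parameters $(q_0, R, 1)$ to obtain a threshold $L_0$; take $L := L_0$. Given $a, b \in F \cap \partiali\Cc$ with $\dist_F(a,b) \geq L$ and midpoint $m$, set $V := \Pb(\Span\{a,b,q_0\})$, which is a projective $2$-plane because $[a,b] \subset \partial\Omega$ forces $\langle a,b\rangle \cap \Omega = \emptyset$ and hence $q_0 \notin \langle a,b\rangle$. Parametrize $(q_0, m) \subset V \cap \relint(\Cc)$ by Hilbert arc-length from $q_0$ and let $p_n$ be the point at Hilbert distance $n$ from $q_0$; then $p_n \to m$ in $\dist_\Pb$ and $\dist_\Omega(p_n, p_{n+1}) = 1$. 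For all $n$ sufficiently large, Proposition~\ref{prop:finding_simplices} yields a properly embedded simplex $S_n \subset \Cc$ of dimension at least two with $V \cap \Bc_\Omega(p_n, R) \subset \Nc_\Omega(S_n, 1)$, and~\eqref{eqn:D2-for-thm-bd-face} provides $X_n \in \Xc$ with $S_n \subset \Nc_\Omega(X_n, D_2)$, so $V \cap \Bc_\Omega(p_n, R) \subset \Nc_\Omega(X_n, D_2+1)$.

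Because $\dist_\Omega(p_n, p_{n+1}) = 1$, $\Bc_\Omega(p_n, R-1) \subset \Bc_\Omega(p_{n+1}, R)$, whence $V \cap \Bc_\Omega(p_n, R-1) \subset \Nc_\Omega(X_n, D_2+1) \cap \Nc_\Omega(X_{n+1}, D_2+1)$. For every $\delta > 0$ one can pick two points on a projective line through $p_n$ inside $V$, on opposite sides of $p_n$, each at Hilbert distance $(R-1-\delta)/2$ from $p_n$; they lie in $V \cap \Bc_\Omega(p_n, R-1)$ with mutual $\dist_\Omega$-distance $R-1-\delta$, which exceeds $D_1(D_2+1)$ once $\delta$ is small. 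Strong isolation therefore forces $X_n = X_{n+1}$, and by induction $X_n$ equals a common $X \in \Xc$ for all large $n$. To conclude, pick $x_n \in X$ with $\dist_\Omega(p_n, x_n) < D_2 + 1$ and pass to a subsequence so that $x_n \to x_\infty \in \overline{X}$. Since $p_n \to m \in \partial\Omega$ and the Hilbert metric is proper, $x_\infty \in \partial\Omega \cap \overline{X} = \partiali X$, and Proposition~\ref{prop:asymp_sequences} gives $F_\Omega(x_\infty) = F_\Omega(m) = F$ and $\dist_F(m, x_\infty) \leq \liminf_n \dist_\Omega(p_n, x_n) \leq D_2 + 1$, so $x := x_\infty$ works. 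The main subtlety is the stabilization $X_n \equiv X$, which hinges on the uniform-in-$n$ lower bound on the $\dist_\Omega$-diameter of $V \cap \Bc_\Omega(p_n, R-1)$; this is automatic because $p_n$ stays interior to the $2$-dimensional Hilbert geometry $V \cap \Omega$ no matter how close $p_n$ is to $m$.
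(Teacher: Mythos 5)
Your proof is correct and follows essentially the same route as the paper's: both apply Proposition~\ref{prop:finding_simplices} with $\epsilon=1$ at points approaching $m$ along a segment toward a fixed basepoint, upgrade the resulting simplices to elements of $\Xc$ via the coarse-containment constant $D_2$, use strong isolation to force a single $X\in\Xc$, and conclude with Proposition~\ref{prop:asymp_sequences}. Your discrete unit-spaced sequence and the explicit collinear-points argument merely make precise the paper's (unproved but true) assertion that the relevant ball intersections have $\dist_\Omega$-diameter at least the isolation threshold.
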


\begin{proof} Since $\Xc$ is strongly isolated, there exists $r> 0$ such that
\begin{align*}
\diam_\Omega \left( \Nc_\Omega(X_1, D_2+1) \cap \Nc_\Omega(X_2, D_2+1) \right) < r
\end{align*}
for all distinct pairs $X_1, X_2 \in \Xc$ where $D_2$ is the constant in Equation \eqref{eqn:D2-for-thm-bd-face}. Fix $q \in \Cc$ and let $L > 0$ satisfy Proposition~\ref{prop:finding_simplices} for $q \in \Cc$ and constants $\epsilon=1$ and $r$ as above. Now suppose 
\begin{enumerate}
\item $a,b \in \partiali \Cc$ are in the same boundary face $F$ of $\Omega$, 
\item $\dist_F(a,b) \geq L$, and
\item $m$ is the midpoint of $[a,b]$ with respect to $\dist_F$.
\end{enumerate}
By our choice of $L$, there exists $q^\prime \in (m,q]$ such that: if $p \in (m,q^\prime]$ then there exists some properly embedded simplex $S_p \subset \Cc$ of dimension at least two such that 
\begin{align*}
\Pb(\Span\{ a,b,q\}) \cap \Bc_\Omega(p,r) \subset \Nc_\Omega(S_p, 1).
\end{align*}
Then, by our choice of $D_2$ in Equation \eqref{eqn:D2-for-thm-bd-face}, there exists $X_p \in \Xc$ such that 
\begin{align*}
\Pb(\Span\{ a,b,q\}) \cap \Bc_\Omega(p,r) \subset \Nc_\Omega(X_p, D_2+1).
\end{align*}

We claim that $X_p$ does not depend on $p \in (m,q^\prime]$. To verify this it is enough to fix $p_1,p_2 \in (m,q^\prime]$ with $\dist_\Omega(p_1,p_2) < r$ and show that $X_{p_1}$ and $X_{p_2}$ coincide. For such $p_1, p_2 \in (m,q^\prime]$ we have 
\begin{align*}
 \Pb(\Span\{ a,b,q\}) \cap \Bc_\Omega(p_1, r) \cap \Bc_\Omega(p_2, r)   \subset  \Nc_\Omega(X_{p_1}, D_2+1) \cap  \Nc_\Omega(X_{p_2}, D_2+1)
 \end{align*}
and 
\begin{align*}
\diam_\Omega \Big(  \Pb(\Span\{ a,b,q\}) \cap  \Bc_\Omega(p_1, r) \cap \Bc_\Omega(p_2, r)  \Big) \geq r.
\end{align*}
So $X_{p_1}=X_{p_2}$ by our choice of $r$.

Now let $X:=X_p$ for $p \in (m,q^\prime]$. Then
\begin{align*}
(m,q^\prime] \subset \Nc_\Omega(X, D_2+1)
\end{align*}
which implies, by Proposition~\ref{prop:asymp_sequences}, that there exists $x \in \partiali X \cap F$ such that 
\begin{equation*}
\dist_{F}(m,x) \leq D_2+1. \qedhere
\end{equation*}
\end{proof}

\begin{lemma}[Part (4)]\label{lem:part4}
If $x \in \partiali\Cc$ and 
$$
\diam_{F_\Omega(x)} \left( F_\Omega(x) \cap \partiali\Cc \right)\geq L,
$$ 
then $x \in F_\Omega(\partiali X)$ for some $X \in \Xc$. 
\end{lemma}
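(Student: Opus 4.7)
The plan is to apply Lemma~\ref{lem:segments_in_bd} directly, with $F := F_\Omega(x)$ playing the role of the boundary face. Choose the constant $L$ in Part (4) to be (slightly larger than) the constant produced by Lemma~\ref{lem:segments_in_bd}, so that $\diam_F(F \cap \partiali\Cc) \geq L$ guarantees the existence of two points $a, b \in F \cap \partiali\Cc$ with $\dist_F(a,b)$ at least the value demanded by that lemma. Since $a, b \in F = F_\Omega(x)$, Observation~\ref{obs:faces}(3) yields $F_\Omega(a) = F_\Omega(b) = F$, so $a$ and $b$ lie in a common boundary face of $\Omega$ as required.

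Applying Lemma~\ref{lem:segments_in_bd} then produces some $X \in \Xc$ together with a point $y \in \partiali X \cap F$ (lying within $\dist_F$-distance $D_2 + 1$ of the midpoint of $[a,b]$, though the quantitative bound is not needed here). Because $y \in F = F_\Omega(x)$, a second application of Observation~\ref{obs:faces}(3) gives $F_\Omega(y) = F_\Omega(x)$, hence $x \in F_\Omega(y)$. Since $y \in \partiali X$, by definition $F_\Omega(y) \subseteq F_\Omega(\partiali X)$, and we conclude $x \in F_\Omega(\partiali X)$, as required.

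There is no substantial obstacle: all the geometric content has been absorbed into Proposition~\ref{prop:finding_simplices} and Lemma~\ref{lem:segments_in_bd}, which together say that a sufficiently long segment in a boundary face forces a nearby properly embedded simplex, which in turn is coarsely contained in some $X \in \Xc$, and that element of $\Xc$ must have an ideal boundary point in $F$ itself. The only mild nuance is the distinction between a supremum that is $\geq L$ and an actually realized distance $\geq L$; this is handled by inflating the Part~(4) constant by a harmless additive amount.
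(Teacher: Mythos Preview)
Your proof is correct and follows exactly the route the paper takes: the paper's proof is the single line ``This follows immediately from Lemma~\ref{lem:segments_in_bd},'' and you have simply spelled out the details. Your remark about possibly enlarging $L$ to pass from a supremum to a realized distance is a legitimate nicety (and harmless, since Part~(4) only asserts existence of some $L$).
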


\begin{proof} This follows immediately from Lemma~\ref{lem:segments_in_bd}. \end{proof}

\begin{lemma}[Part (5)]
\label{lem:boundary-faces-of-X}
If $X \in \Xc$ and $x \in \partiali X$, then 
\begin{align*}
\dist_{F_\Omega(x)}^{\Haus}\left( F_\Omega(x) \cap \partiali \Cc, F_\Omega(x) \cap \partiali X\right) \leq \max\{ L, 2D_2+2\}. 
\end{align*}
\end{lemma}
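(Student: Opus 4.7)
The plan is to prove the non-trivial direction of the Hausdorff bound: every $y \in F_\Omega(x) \cap \partiali\Cc$ lies within $\dist_F$-distance $M := \max\{L, 2D_2+2\}$ of some point of $F_\Omega(x) \cap \partiali X$, where $F := F_\Omega(x)$. The reverse direction is immediate from $\partiali X \subset \partiali\Cc$. If $\dist_F(x,y) \leq M$ then $x$ itself witnesses the bound (since $x \in F \cap \partiali X$), so I would assume $\dist_F(x,y) > M \geq L$ and build a sequence $\{x_n\} \subset F \cap \partiali X$ with controlled distance to $y$.

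The construction is by iterated midpoint approximation. Set $x_0 := x$ and suppose $x_n \in F \cap \partiali X$ has been defined with $d_n := \dist_F(x_n, y) \geq L$. Let $m_n$ be the $\dist_F$-midpoint of $[x_n, y]$; since $x_n, y \in F \cap \partiali\Cc$ lie in the same boundary face at distance at least $L$, Lemma~\ref{lem:segments_in_bd} yields some $X_n \in \Xc$ and $x_{n+1} \in \partiali X_n \cap F$ with $\dist_F(m_n, x_{n+1}) \leq D_2 + 1$. The triangle inequality then gives the contraction $d_{n+1} \leq D_2 + 1 + d_n/2$.

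The main point of the argument, and the only place the peripheral family structure enters non-trivially, is identifying $X_n$ with $X$. Since $x_{n+1} \in F = F_\Omega(x)$, Observation~\ref{obs:faces}(3) gives $F_\Omega(x_{n+1}) = F$; combined with $x \in \partiali X$ and $x_{n+1} \in \partiali X_n$ this forces $F \subset F_\Omega(\partiali X) \cap F_\Omega(\partiali X_n)$, and Lemma~\ref{lem:intersecting_faces} (part (3) of the theorem, already established) forces $X_n = X$. Hence $x_{n+1} \in F \cap \partiali X$ and the induction is legitimate.

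Finally, I would close the argument by cases. If the iteration halts at some finite stage $n$, then $\dist_F(x_n, y) < L \leq M$ and $x' := x_n$ is the desired point. Otherwise the recursion gives $d_n - (2D_2+2) \leq (d_0 - (2D_2+2))/2^n$, hence $\limsup_n d_n \leq 2D_2 + 2 \leq M$, and the sequence $\{x_n\}$ is bounded in the proper metric space $(F, \dist_F)$ (the Hilbert metric on the properly convex domain $F$ inside $\Pb(\Span F)$ is proper). Passing to a convergent subsequence $x_{n_k} \to x_\infty \in F$ and using that $\partiali X \cap F = \overline{X} \cap F$ is closed in $F$, I conclude $x_\infty \in F \cap \partiali X$ with $\dist_F(x_\infty, y) \leq M$, completing the bound.
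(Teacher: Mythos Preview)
Your proof is correct and uses the same two key ingredients as the paper --- Lemma~\ref{lem:segments_in_bd} (midpoint approximation) and Lemma~\ref{lem:intersecting_faces} (forcing $X_n = X$) --- but the overall shape is different.

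The paper argues by a one-step contradiction: assuming $\dist_F(y, F \cap \partiali X) > \max\{L, 2D_2+2\}$, it first \emph{replaces} $x$ by a point of $F \cap \partiali X$ realizing this distance (which exists since $F \cap \partiali X$ is closed in the proper metric space $(F,\dist_F)$). With $m$ the midpoint of $[x,y]$, one then has $\dist_F(m, F \cap \partiali X) = \tfrac{1}{2}\dist_F(y, F\cap\partiali X) > D_2+1$; but Lemma~\ref{lem:segments_in_bd} and Lemma~\ref{lem:intersecting_faces} produce a point of $F \cap \partiali X$ within $D_2+1$ of $m$, a contradiction. Your approach instead iterates the midpoint lemma to build a contracting sequence $x_n \in F \cap \partiali X$ and extracts a limit by properness. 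This trades the (easy) observation that the infimum is attained for a compactness argument at the end; both routes are clean, but the paper's is shorter once one sees that a single application of the midpoint lemma already gives the contradiction.
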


\begin{proof} Fix $X \in \Xc$ and $x \in \partiali X$. Since $F_\Omega(x) \cap \partiali X \subset F_\Omega(x) \cap \partiali \Cc$, it suffices to show that 
$$
\sup_{y \in F_\Omega(x) \cap \partiali \Cc} \dist_{F_\Omega(x)}(y, F_\Omega(x) \cap \partiali X ) \leq \max\{ L, 2D_2+2\}.
$$
Fix $y \in F_\Omega(x) \cap \partiali \Cc$ and suppose for a contradiction that 
\begin{align*}
\dist_{F_\Omega(x)}(y, F_\Omega(x) \cap \partiali X ) > \max\{ L, 2D_2+2\}.
\end{align*}
By changing $x$ we may suppose that 
\begin{align*}
\dist_{F_\Omega(x)}(y, F_\Omega(x) \cap \partiali X ) = \dist_{F_\Omega(x)}(y,x). 
\end{align*}
Let $m$ be the midpoint of $[x,y]$ with respect to $\dist_{F_\Omega(x)}$. Then by Lemma~\ref{lem:segments_in_bd} there exist $X^\prime \in \Xc$ and $x^\prime \in \partiali X^\prime$ such that 
\begin{align*}
\dist_{F_\Omega(x)}(m,x^\prime) \leq D_2+1. 
\end{align*}
However, then $x \in F_\Omega(\partiali X) \cap F_\Omega(\partiali X^\prime) \neq \emptyset$ and so by Lemma~\ref{lem:intersecting_faces} we have $X=X^\prime$. So 
\begin{align*}
D_2+1 &< \frac{1}{2}\dist_{F_\Omega(x)}(y, F_\Omega(x) \cap \partiali X ) =  \dist_{F_\Omega(x)}(m,  F_\Omega(x) \cap \partiali X ) \\
&\leq  \dist_{F_\Omega(x)}(m,x^\prime) \leq D_2+1
\end{align*}
and we have a contradiction. 
\end{proof}

\begin{lemma}[Part (7)]
\label{lem:line-in-boundary-dichotomy}
If $[x,y] \subset \partiali \Cc$, then either $F_{\Omega}(x)=F_{\Omega}(y)$, or there exists $X \in \Xc$ such that $x,y \in F_{\Omega}(\partiali X)$. 
\end{lemma}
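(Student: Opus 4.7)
The plan is to reduce to the case $F_\Omega(x) \neq F_\Omega(y)$, locate a common $X \in \Xc$ whose face structure contains the open segment $(x,y)$, and then promote the conclusion to the endpoints $x$ and $y$ using the Hausdorff bound from Part~(5). First, assuming $F_\Omega(x) \neq F_\Omega(y)$, I will set $F := F_\Omega(z)$ for any $z \in (x,y)$, which is independent of the choice of $z$ by Observation~\ref{obs:faces}(4). Because $F_\Omega(x) \neq F$, the point $x$ lies in $\overline{F} \setminus F = \partial F$ inside $\Pb(\Spanset F)$, where $F$ is a properly convex domain in its span by Observation~\ref{obs:faces}(2). Consequently, any sequence in $F$ approaching $x$ has $\dist_F$-distance to a fixed interior point tending to $\infty$, so $(x,y) \subset F \cap \partiali\Cc$ and $\diam_F(F \cap \partiali\Cc) = \infty$.

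Next, I will apply Lemma~\ref{lem:part4} (Part~(4)): since $F \cap \partiali\Cc$ has arbitrarily large diameter in $\dist_F$, every $z \in (x,y)$ satisfies $z \in F_\Omega(\partiali X_z)$ for some $X_z \in \Xc$, which supplies a point $x_z' \in \partiali X_z$ with $F_\Omega(x_z') = F_\Omega(z) = F$, i.e.\ $x_z' \in F \cap \partiali X_z$. For any two $z_1, z_2 \in (x,y)$, both $x_{z_1}', x_{z_2}'$ lie in $F$, so $F_\Omega(x_{z_1}') = F = F_\Omega(x_{z_2}')$, and Lemma~\ref{lem:intersecting_faces} (Part~(3)) forces $X_{z_1} = X_{z_2}$. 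Call this common element $X \in \Xc$. Invoking Lemma~\ref{lem:boundary-faces-of-X} (Part~(5)) with any $x_0' \in F \cap \partiali X$ gives
$$
\dist_F^{\Haus}\bigl(F \cap \partiali \Cc,\; F \cap \partiali X\bigr) \leq R.
$$

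Finally, to promote the inclusion from $(x,y)$ to $\{x,y\}$, I will pick $z_n \in (x,y)$ with $z_n \to x$ and approximants $x_n' \in F \cap \partiali X$ with $\dist_F(z_n, x_n') \leq R$. After passing to a subsequence, $x_n' \to x' \in \overline{F}$, and since $\partiali X = \overline{X} \cap \partial\Omega$ is closed, $x' \in \partiali X$. Viewing $F$ as a properly convex domain in $\Pb(\Spanset F)$, Proposition~\ref{prop:asymp_sequences} applied inside $F$ yields $F_F(x) = F_F(x')$. Using the elementary containment $F_F(x) \subset F_\Omega(x)$ (any open line segment in $\overline{F}$ is an open line segment in $\overline{\Omega}$), I conclude $x' \in F_\Omega(x)$, hence $x \in F_\Omega(x') \subset F_\Omega(\partiali X)$. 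The identical argument with $y$ in place of $x$ gives $y \in F_\Omega(\partiali X)$.

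The main obstacle is this last transfer of information from the internal Hilbert geometry of the boundary face $F$ back to $\Omega$: the quantitative bound from Part~(5) lives in $\dist_F$ and not in $\dist_\Omega$, so I need both that $F$ itself is a properly convex domain in its span (so Proposition~\ref{prop:asymp_sequences} can be applied \emph{inside} $F$) and that open faces of $F$ are contained in open faces of $\Omega$, which together convert the $\dist_F$-bound into the desired face-level statement about $\Omega$.
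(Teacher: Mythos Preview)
Your proof is correct and follows the same route as the paper's: locate $X$ via Part~(4), invoke the Hausdorff bound of Part~(5), and finish with Proposition~\ref{prop:asymp_sequences} applied inside the face. Two minor points: from $F_\Omega(x)\neq F_\Omega(y)$ you can only conclude that \emph{at least one} of $x,y$ lies in $\partial F$ (not necessarily $x$), which is still enough for $\diam_F\bigl(F\cap\partiali\Cc\bigr)=\infty$; and your detour through Part~(3) to identify all the $X_z$ is unnecessary, since the paper simply applies Part~(4) to a single $z\in(x,y)$ and then Part~(5) already gives $(x,y)\subset \Nc_F\bigl(F\cap\partiali X,\,R\bigr)$ for that one $X$.
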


\begin{proof} We suppose that $F_\Omega(x) \neq F_\Omega(y)$ and show that $x,y \in F_{\Omega}(\partiali X)$ for some $X \in \Xc$. Fix $z \in (x,y)$. Notice that since $F_{\Omega}(x) \neq F_{\Omega}(y)$, 
$$
\diam_{F_\Omega(z)}\Big( (x,y) \Big) =\infty.
$$
So by Lemma~\ref{lem:part4} there exists $X \in \Xc$ with $z \in F_\Omega( \partiali X)$. Then Lemma \ref{lem:boundary-faces-of-X} implies that
$$
(x,y) \subset \Nc_{F_\Omega(z)}\left( F_\Omega(z) \cap \partiali X,  \max\{ L, 2D_2+2\}\right).
$$
Finally Proposition~\ref{prop:asymp_sequences} implies that $x,y \in F_\Omega(\partiali X)$.  
\end{proof}

\begin{lemma}[Part (6)]\label{lem:part 6} If $X \in \Xc$, $x \in \partiali X$, and $[x,y] \subset \partiali\Cc$, then $[x,y] \subset F_\Omega(\partiali X)$. 
\end{lemma}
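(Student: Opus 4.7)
The plan is to invoke Lemma~\ref{lem:line-in-boundary-dichotomy} (Part (7)) to dichotomize the behavior of $[x,y]$, then handle each case using only Observation~\ref{obs:faces} and the already-established parts of Theorem~\ref{thm:boundary_faces}. No new geometric input should be needed; this is essentially a bookkeeping argument with open faces.

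First I would apply Part (7) to $[x,y] \subset \partiali \Cc$. In the easy case $F_\Omega(x)=F_\Omega(y)$, note that $y \in F_\Omega(x)$ and an open face is convex, so $[x,y] \subset F_\Omega(x)$; since $x \in \partiali X$, we have $F_\Omega(x) \subset F_\Omega(\partiali X)$, and the claim follows.

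In the other case there exists $X^\prime \in \Xc$ with $x,y \in F_\Omega(\partiali X^\prime)$. Since $x \in \partiali X$ I would observe that $F_\Omega(x) \subset F_\Omega(\partiali X) \cap F_\Omega(\partiali X^\prime)$, which is non-empty, so Part (3) (Lemma~\ref{lem:intersecting_faces}) forces $X = X^\prime$. Hence there is some $y^\prime \in \partiali X$ with $F_\Omega(y^\prime)=F_\Omega(y)$. For any $z \in (x,y)$, Observation~\ref{obs:faces}(4), applied with $p = x \in F_\Omega(x)$ and $q = y^\prime \in F_\Omega(y)$, gives $(x,y^\prime) \subset F_\Omega(z)$.

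The one point requiring a little care — and what I expect to be the only real step — is ruling out that $(x,y^\prime) \subset X$. Since $x,y^\prime \in \overline{X}$ and $\overline{X}$ is convex, the segment $(x,y^\prime)$ lies in $\overline{X} \subset \overline{\Omega}$; because $\Omega$ is open and convex, a standard argument shows this segment is either entirely in $\Omega$ (hence in $X$) or entirely in $\partial \Omega$ (hence in $\partiali X$). If it lay in $\Omega$, then $F_\Omega(z)=\Omega$ for any $z \in (x,y)$, contradicting $z \in \partial\Omega$. So $(x,y^\prime) \subset \partiali X$, and for any $w \in (x,y^\prime)$ we obtain $F_\Omega(z)=F_\Omega(w) \subset F_\Omega(\partiali X)$, giving $z \in F_\Omega(\partiali X)$. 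Combined with $x \in \partiali X$ and $y \in F_\Omega(\partiali X)$ (the endpoints), this yields $[x,y] \subset F_\Omega(\partiali X)$.
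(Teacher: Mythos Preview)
Your proof is correct and uses the same two ingredients as the paper's one-line proof (Lemmas~\ref{lem:line-in-boundary-dichotomy} and~\ref{lem:intersecting_faces}). Your ``face-swap'' to $y'\in\partiali X$ works, but is more than is needed: once you know $X=X'$, the quickest way to get every $z\in(x,y)$ into $F_\Omega(\partiali X)$ is simply to apply Part~(7) again to the subsegment $[x,z]\subset\partiali\Cc$; in either branch of the dichotomy you land in $F_\Omega(\partiali X)$ (using Part~(3) once more if necessary), and no analysis of $(x,y')$ is required.
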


\begin{proof} This follows immediately from Lemmas~\ref{lem:line-in-boundary-dichotomy} and~\ref{lem:intersecting_faces}. \end{proof}

\begin{lemma}[Part (8)]
If $X \in \Xc$, then $F_\Omega(\partiali X) \cap \partiali \Cc$ is closed and $\Stab_{\Gamma}(X)$ acts co-compactly on $\CH_\Omega(F_\Omega(\partiali X) \cap \partiali \Cc) \cap \Omega$. 
\end{lemma}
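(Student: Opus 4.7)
The plan is to handle the two assertions separately, using the earlier parts of the theorem as the main inputs. The closedness of $F_\Omega(\partiali X) \cap \partiali \Cc$ reduces to closedness of $\partiali X$ and $\partial\Omega$, whereas the cocompactness reduces to showing $\CH_\Omega(F_\Omega(\partiali X) \cap \partiali \Cc) \cap \Omega$ sits in a uniform Hilbert neighborhood of $X$; the latter step, based on a Carath\'eodory--plus--part~(5) argument, is where the main content lies.

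For closedness, suppose $y_n \to y$ with $y_n \in F_\Omega(\partiali X) \cap \partiali \Cc$, and pick $x_n \in \partiali X$ with $F_\Omega(x_n) = F_\Omega(y_n)$. Since $\partiali X$ is a compact subset of $\partial \Omega$, after passing to a subsequence $x_n \to x \in \partiali X$. If $y_n = x_n$ along a subsequence, then $y = x \in \partiali X$. Otherwise, $y_n \neq x_n$ for all large $n$, in which case $(y_n, x_n) \subset F_\Omega(y_n) \subset \partial \Omega$; since $\partial \Omega$ is closed, taking the limit gives $[y,x] \subset \partial \Omega$, so $F_\Omega(y) = F_\Omega(x)$. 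In either case, $y \in F_\Omega(x) \subset F_\Omega(\partiali X)$, and $y \in \partiali \Cc$ by closedness of $\partiali\Cc$.

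For cocompactness, let $Y := \CH_\Omega(F_\Omega(\partiali X) \cap \partiali \Cc) \cap \Omega$. Observe first that $\Stab_\Gamma(X)$ preserves $Y$: it preserves $\partiali X$, hence $F_\Omega(\partiali X)$ (since faces are permuted by $\Aut(\Omega)$), and it preserves $\partiali \Cc$, so it preserves their intersection and the convex hull. The main step is to prove that there exists $R > 0$ (namely, the constant from part (5)) such that
\begin{equation*}
Y \subset \Nc_\Omega(X, R).
\end{equation*}
To see this, fix an affine chart containing $\overline{\Omega}$. Since $F_\Omega(\partiali X) \cap \partiali \Cc$ is closed (by the first paragraph) and bounded in the chart, its convex hull is closed, so by Carath\'eodory's theorem every $y \in Y$ is a convex combination of at most $d+1$ points $y_1, \ldots, y_m \in F_\Omega(\partiali X) \cap \partiali \Cc$. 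For each $y_j$, part (5) supplies $z_j \in F_\Omega(y_j) \cap \partiali X$ with $\dist_{F_\Omega(y_j)}(y_j, z_j) \leq R$. Applying Proposition~\ref{prop:Hausdorff_distance_between_ch} with $F = F_\Omega(y) = \Omega$ (since $y \in \Omega$) then yields a point $w \in \CH_\Omega(z_1, \ldots, z_m) \cap \Omega$ with $\dist_\Omega(y, w) \leq R$. Because each $z_j \in \partiali X \subset \overline{X}$ and $\overline{X}$ is convex in $\overline{\Omega}$, we have $w \in \overline{X} \cap \Omega = X$, giving the inclusion.

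To conclude, part (2) gives a compact set $K \subset X$ with $\Stab_\Gamma(X) \cdot K = X$. Then $\overline{\Nc_\Omega(K,R)}$ is compact by properness of the Hilbert metric, and the set $Y \cap \overline{\Nc_\Omega(K,R)}$ is closed in a compact set, hence compact; by the neighborhood inclusion and $\Stab_\Gamma(X)$-invariance of $Y$, this set is a compact fundamental domain for the $\Stab_\Gamma(X)$-action on $Y$. The expected obstacle is precisely the neighborhood bound $Y \subset \Nc_\Omega(X,R)$, where Carath\'eodory reduces the problem to finite convex combinations and Proposition~\ref{prop:Hausdorff_distance_between_ch} propagates the face-wise bound from part (5) to the actual Hilbert distance inside $\Omega$.
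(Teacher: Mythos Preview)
Your cocompactness argument is essentially the paper's: write a point of $Y$ as a finite convex combination of boundary points, use part~(5) to replace each by a nearby point of $\partiali X$, and apply Proposition~\ref{prop:Hausdorff_distance_between_ch} to bound the Hilbert distance to $X$. That part is fine.

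The closedness argument, however, contains a genuine error. From $[y,x]\subset\partial\Omega$ you conclude $F_\Omega(y)=F_\Omega(x)$, but this implication is false in general: take $\Omega$ to be a simplex and $y,x$ two distinct vertices; then $[y,x]\subset\partial\Omega$ while $F_\Omega(y)=\{y\}\neq\{x\}=F_\Omega(x)$. Open faces are not closed under limits in the way your argument assumes: having $y_n\in F_\Omega(x_n)$ and $x_n\to x$, $y_n\to y$ does not force $y\in F_\Omega(x)$.

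The fix is to use the peripheral-family structure, not just the geometry of $\partial\Omega$. You already know $y,x\in\partiali\Cc$, and since $\overline{\Cc}$ is convex and $[y,x]\subset\partial\Omega$, in fact $[y,x]\subset\partiali\Cc$. Now $x\in\partiali X$, so part~(6) applies and gives $y\in F_\Omega(\partiali X)$ directly. This is precisely how the paper argues. Note that closedness genuinely requires part~(6) (hence the strong isolation/simplex-containment hypotheses): for an arbitrary closed unbounded convex $X\subset\Cc$ the set $F_\Omega(\partiali X)\cap\partiali\Cc$ need not be closed, as the simplex example already shows.
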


\begin{proof} We first verify that $F_\Omega(\partiali X) \cap \partiali \Cc$ is closed. Suppose $\{x_n\}$ is a sequence in $F_\Omega(\partiali X) \cap \partiali \Cc$ converging to some $x \in \partiali\Cc$ (note $\partiali \Cc$ is closed). Then for each $n$ there exists $x_n^\prime \in \partiali X$ with $x_n \in F_\Omega(x_n^\prime)$. In particular, $[x_n, x_n^\prime] \subset \partiali \Cc$. Passing to a subsequence we can suppose that $x_n^\prime \rightarrow x^\prime$. Then $x^\prime \in \partiali X$ and $[x,x^\prime] \subset \partiali \Cc$. Then  Lemma \ref{lem:part 6} implies that $x \in F_{\Omega}(\partiali X)$. Hence  $F_\Omega(\partiali X) \cap \partiali \Cc$ is closed.

Now $\Cc_X : = \CH_\Omega(F_\Omega(\partiali X) \cap \partiali\Cc) \cap \Omega$ is closed in $\Cc$ and $\Stab_{\Gamma}(X)$ acts co-compactly on $X$, so it suffices to show that $\Cc_X$ is contained in a bounded neighborhood of $X$. Fix $p \in \Cc_X$. Then there exist $p_1, \dots, p_m \in F_\Omega(\partiali X) \cap \partiali \Cc$ such that $p \in \CH_\Omega(p_1,\dots, p_m)$. Then by Lemma~\ref{lem:boundary-faces-of-X} there exist $p_1^\prime, \dots, p_m^\prime \in \partiali X$ such that 
$$
\dist_{F_\Omega(p_j)}(p_j, p_j^\prime) \leq \max\{ L, 2D_2+2\} \quad \text{for } j=1,\dots, m.
$$
Then by Proposition~\ref{prop:Hausdorff_distance_between_ch}
$$
\dist_\Omega(p, X) \leq \max\{ L, 2D_2+2\}.
$$
Since $p \in \Cc_X$ was arbitrary, $\Cc_X \subset \Nc_\Omega(X, \max\{ L, 2D_2+2\})$. 
\end{proof}

\section{Naive convex co-compactness of peripheral subgroups}
\label{sec:peripheral subgroups are ncc}

\begin{proposition}
\label{prop:rh_implies_isolation}
Suppose that $(\Omega, \Cc, \Gamma)$ is a naive convex co-compact triple and $\Gamma$ is relatively hyperbolic with respect to $\{P_1,\dots, P_m\}$. For $1 \leq j \leq m$, let 
$$
X_j:=\CH_\Omega (\limset(P_j) \cap \partiali \Cc) \cap \Omega. 
$$ 
Then:
\begin{enumerate}
\item Each $(\Omega,X_j,P_j)$ is a naive convex co-compact triple.
\item $(\Cc, \dist_\Omega)$ is relatively hyperbolic with respect to $\Xc:=\Gamma \cdot \{X_1, \dots, X_m\}$. 
\end{enumerate}
\end{proposition}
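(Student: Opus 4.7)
The plan is to transport the relative hyperbolicity of $\Gamma$ to $(\Cc, \dist_\Omega)$ via the \v{S}varc--Milnor quasi-isometry and then identify the resulting peripheral collection with $\Xc$ up to a uniform Hausdorff distance. Since $\Gamma$ acts properly, isometrically, and cocompactly on the proper geodesic space $(\Cc, \dist_\Omega)$, the \v{S}varc--Milnor lemma gives a $\Gamma$-equivariant quasi-isometry $\Phi : \Gamma \to \Cc$, $\gamma \mapsto \gamma \cdot p_0$, for a fixed $p_0 \in \Cc$. Since the Cayley graph of $\Gamma$ is relatively hyperbolic with respect to the family of cosets $\{\gamma P_j\}$, Dru\c{t}u--Sapir's quasi-isometric invariance of relative hyperbolicity (applicable since $\Yc := \{\gamma P_j \cdot p_0 : \gamma \in \Gamma,\, 1 \leq j \leq m\}$ arises from a proper cocompact action, so each piece has the required asymptotic cone structure) yields that $(\Cc, \dist_\Omega)$ is relatively hyperbolic with respect to $\Yc$.

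The core step is to produce a uniform $R \geq 0$ such that $\dist_\Omega^{\Haus}(P_j \cdot p_0,\, X_j) \leq R$ for each $j$; by $\Gamma$-equivariance this propagates to a Hausdorff equivalence between $\Yc$ and $\Xc$. For the inclusion $P_j \cdot p_0 \subset \Nc_\Omega(X_j, R)$, I would use Theorem~\ref{thm:rh_ds} part \ref{thm:quasi-convexity} to confine $\dist_\Omega$-geodesic segments with endpoints in $P_j \cdot p_0$ to a uniform neighborhood of $P_j \cdot p_0$, let their endpoints run off to points of $\limset(P_j) \cap \partiali \Cc$, and then apply Proposition~\ref{prop:Hausdorff_distance_between_lines} to conclude each $\gamma p_0$ lies uniformly close to a projective segment between two limit points of $P_j$, hence close to $X_j$. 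For the reverse inclusion $X_j \subset \Nc_\Omega(P_j \cdot p_0, R)$, any $x \in X_j \cap \Omega$ lies in the projective convex hull of finitely many $z_1, \dots, z_k \in \limset(P_j) \cap \partiali \Cc$. Choosing sequences $\{\gamma_n^{(i)} p_0\}$ in $P_j \cdot p_0$ with $\gamma_n^{(i)} p_0 \to z_i$, Proposition~\ref{prop:Hausdorff_distance_between_ch} guarantees that the projective convex hulls of the approximants converge in Hausdorff distance to $\CH_\Omega(z_1,\dots,z_k) \cap \Omega$, while Theorem~\ref{thm:rh_ds} part \ref{thm:quasi-convexity} (applied to coarse geodesic triangles/simplices in $P_j \cdot p_0$) keeps these finite-dimensional hulls uniformly close to $P_j \cdot p_0$; passing to the limit places $x$ within a uniform neighborhood of $P_j \cdot p_0$.

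Once this Hausdorff equivalence is in hand, the proposition follows. Because $\Yc$ and $\Xc$ are $\Gamma$-equivariantly Hausdorff equivalent, $(\Cc, \dist_\Omega)$ is relatively hyperbolic with respect to $\Xc$, giving (2). Proposition~\ref{prop:immediate_consequences_of_DS05} then implies $\Xc$ is a peripheral family of $(\Omega, \Cc, \Gamma)$, so by Theorem~\ref{thm:boundary_faces}(2), $\Stab_\Gamma(X_j)$ acts cocompactly on $X_j$. Since $X_j$ is $P_j$-invariant we have $P_j \leq \Stab_\Gamma(X_j)$, and the almost-malnormality of peripheral subgroups of relatively hyperbolic groups, combined with the strong isolation in Theorem~\ref{thm:boundary_faces}(3), forces $[\Stab_\Gamma(X_j) : P_j]$ to be finite, so $P_j$ itself acts cocompactly on $X_j$. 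Non-emptiness of $X_j$ follows from the Hausdorff equivalence with the manifestly non-empty orbit $P_j \cdot p_0$, proving (1).

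The main obstacle I anticipate is the second direction of the Hausdorff equivalence in Step 2. Dru\c{t}u--Sapir's quasi-convexity is an intrinsically metric statement about the Hilbert geodesics of $\Cc$, whereas $X_j$ is defined in terms of the \emph{projective} convex hull in $\Omega$; passing from one to the other requires delicate control of projective segments that may graze the boundary far from $P_j \cdot p_0$. Proposition~\ref{prop:Hausdorff_distance_between_ch} is the right tool, but applying it requires approximations of the extreme points $z_i$ within prescribed open faces and, crucially, a uniform-in-$x$ bound, which will ultimately force one to use both Proposition~\ref{prop:asymp_sequences} and the cocompactness of the $\Gamma$-action on $\Cc$ to rule out escape of approximants to distinct faces.
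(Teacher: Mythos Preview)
Your overall strategy---transport relative hyperbolicity via \v{S}varc--Milnor to $(\Cc, \dist_\Omega)$ with pieces $\gamma P_j \cdot p_0$, then show these pieces are uniformly Hausdorff-equivalent to the $X_j$---is the paper's strategy, and the use of iterated quasi-convexity (Theorem~\ref{thm:rh_ds} part~\ref{thm:quasi-convexity}) together with Proposition~\ref{prop:Hausdorff_distance_between_ch} is exactly the toolkit the paper employs.

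The gap is in the reverse inclusion $X_j \subset \Nc_\Omega(P_j \cdot p_0, R)$, and it is precisely the obstacle you flag at the end. You write ``Choosing sequences $\{\gamma_n^{(i)} p_0\}$ in $P_j \cdot p_0$ with $\gamma_n^{(i)} p_0 \to z_i$'', but a general $z_i \in \limset(P_j) \cap \partiali\Cc$ need not be an accumulation point of the $p_0$-orbit: the paper defines $\limset$ as a union over \emph{all} basepoints and stresses that in Hilbert geometry the accumulation set can genuinely depend on the basepoint. Proposition~\ref{prop:asymp_sequences} only tells you that $z_i$ and the corresponding $p_0$-orbit accumulation point $z_i'$ lie in the same face $F_\Omega(z_i)$; it gives no bound on $\dist_{F_\Omega(z_i)}(z_i, z_i')$, which is exactly the input Proposition~\ref{prop:Hausdorff_distance_between_ch} requires. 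Without such a uniform face-distance bound, your approximation scheme does not produce a uniform neighborhood, and invoking ``cocompactness of $\Gamma$ on $\Cc$'' does not help since the basepoint witnessing $z_i \in \limset(P_j)$ need not lie in $\Cc$ at all.

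The paper breaks this circularity by inserting an intermediate object $\hat{X}_j := \CH_\Omega(\Lc_j) \cap \Omega$, where $\Lc_j$ is the accumulation set of the \emph{single} orbit $P_j \cdot p_0$. For $\hat{X}_j$ the inductive quasi-convexity argument works cleanly (this is essentially your Step~2 idea, applied to the correct target) and gives $P_j$-cocompactness on $\hat{X}_j$ directly. Then $(\Cc, \dist_\Omega)$ is relatively hyperbolic with respect to $\Gamma \cdot \{\hat{X}_1, \dots, \hat{X}_m\}$, so Proposition~\ref{prop:immediate_consequences_of_DS05} makes this a peripheral family, and now Theorem~\ref{thm:boundary_faces}(5) supplies the missing uniform bound: every $z_i \in \limset(P_j) \cap \partiali\Cc$ lies within bounded $\dist_{F_\Omega(z_i)}$-distance of some point of $\partiali \hat{X}_j$. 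Feeding this into Proposition~\ref{prop:Hausdorff_distance_between_ch} yields $\dist_\Omega^{\Haus}(\hat{X}_j, X_j) \leq R$, completing the proof. Your final paragraph correctly senses that Proposition~\ref{prop:asymp_sequences} is relevant, but the specific mechanism is this two-stage bootstrap through $\hat{X}_j$ and Theorem~\ref{thm:boundary_faces}(5).

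(As a side remark: once the Hausdorff equivalence is in hand, $P_j$-cocompactness on $X_j$ is immediate from $P_j$-invariance of $X_j$; the almost-malnormality detour in your derivation of (1) is not needed.)
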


The rest of the section is devoted to the proof of the proposition. Suppose $(\Omega, \Cc, \Gamma)$ is a naive convex co-compact triple and $\Gamma$ is relatively hyperbolic with respect to $\{P_1,\dots, P_m\}$.

Fix a point $p_0 \in \Cc$. The fundamental lemma of geometric group theory implies that $(\Cc, \dist_\Omega)$ is relatively hyperbolic with respect to $\Gamma \cdot \{ P_1(p_0), \dots, P_m(p_0)\}$.

For each $1 \leq j \leq m$, let $\Lc_j \subset \partiali\Cc$ denote the accumulation points of the orbit $P_j(p_0)$ and let 
$$
\hat{X}_j ={\rm ConvHull}_\Omega(  \Lc_j) \cap \Omega.
$$

\begin{lemma} $\hat{X_j}$ is  non-empty and $P_j$ acts co-compactly on $\hat{X}_j$. 
\end{lemma}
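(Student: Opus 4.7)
The plan is to exhibit $\hat X_j$ as sitting inside the closed projective convex hull $\CH_\Omega(P_j(p_0))$ of the orbit, and to control this hull via the quasi-convexity of peripheral subsets in the relatively hyperbolic space $(\Cc, \dist_\Omega)$. The key inputs are Theorem~\ref{thm:rh_ds}(2) applied to the peripheral set $P_j(p_0)$ (together with a Carath\'eodory-style induction), and the dynamical results in Propositions~\ref{prop:dynamics_of_automorphisms_1}--\ref{prop:dynamics_of_automorphisms_2} for establishing non-emptiness.

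\textbf{Quasi-convex hull estimate and cocompactness.} By Theorem~\ref{thm:rh_ds}(2) there is $r_0 > 0$ such that every Hilbert geodesic with endpoints in $P_j(p_0)$ lies in $\Nc_\Cc(P_j(p_0), r_0)$. By Carath\'eodory's theorem, every point of $\CH_\Omega(P_j(p_0)) \cap \Omega$ is a convex combination of at most $d+1$ points of $\overline{P_j(p_0)}$; inducting on the number of vertices yields $\CH_\Omega(P_j(p_0)) \cap \Omega \subset \Nc_\Cc(P_j(p_0), R)$ for some uniform $R > 0$. Since $\CH_\Omega$ is closed by definition and contains $\overline{P_j(p_0)} = P_j(p_0) \cup \Lc_j$, we obtain
\[
\hat X_j = \CH_\Omega(\Lc_j) \cap \Omega \subset \CH_\Omega(P_j(p_0)) \cap \Omega \subset \Nc_\Cc(P_j(p_0), R).
\]
As $\Gamma$ (and hence $P_j$) acts properly on $(\Cc, \dist_\Omega)$, the set $\Nc_\Cc(P_j(p_0), R) = P_j \cdot \overline{\Bc_\Cc(p_0, R)}$ is $P_j$-cocompact; hence so is the closed $P_j$-invariant subset $\hat X_j$.

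\textbf{Non-emptiness.} Since $P_j$ is infinite and acts properly, $P_j(p_0)$ is unbounded in $\Cc$ and therefore $\Lc_j \neq \emptyset$. For $\hat X_j$ to be non-empty, we must produce $\xi^+, \xi^- \in \Lc_j$ lying in distinct open faces of $\Omega$ (equivalently, $(\xi^+, \xi^-) \subset \Omega$). The group $P_j$ is a finitely generated infinite subgroup of $\PGL_d(\Rb)$ (finite generation being a standard consequence of relative hyperbolicity in this setting; compare Theorem~\ref{thm:boundary_faces}(2)), so by Burnside--Schur it contains an infinite-order element $h$. Passing to subsequences, $h^n \to T_+$ and $h^{-n} \to T_-$ in $\Pb(\End(\Rb^d))$, and $h^{\pm n}(p_0) \to \xi^\pm \in \Lc_j$. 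Propositions~\ref{prop:dynamics_of_automorphisms_1}--\ref{prop:dynamics_of_automorphisms_2} constrain $\Pb(\ker T_\pm)$ and $\mathrm{image}(T_\pm)$, and we use this information to verify $(\xi^+, \xi^-) \subset \Omega$.

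\textbf{Main obstacle.} The crux is the last step: excluding the ``parabolic'' degeneration where $\xi^+ = \xi^-$ or $[\xi^+, \xi^-] \subset \partial\Omega$. Such a degeneration would force the entire cyclic orbit $\{h^n(p_0)\}$ to accumulate inside a single open face of $\Omega$, which is incompatible with the bi-infinite quasi-geodesic $n \mapsto h^n(p_0)$ being undistorted in the tree-graded space $(\Cc, \dist_\Omega)$ and with $P_j(p_0)$ being a strongly isolated peripheral set. The containment $\hat X_j \subset \Nc_\Cc(P_j(p_0), R)$ and the cocompactness it implies are, by contrast, routine consequences of the Dru{\c t}u--Sapir theory.
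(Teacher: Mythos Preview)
Your co-compactness argument is essentially the paper's: both use Theorem~\ref{thm:rh_ds}\eqref{thm:quasi-convexity} plus a Carath\'eodory induction to trap the convex hull of the orbit in a uniform Hilbert neighborhood of $P_j(p_0)$.

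The gap is in non-emptiness. You correctly identify the ``main obstacle'' but do not resolve it, and the heuristic you offer does not close it. First, $n\mapsto h^n(p_0)$ need not be a quasi-geodesic in $(\Cc,\dist_\Omega)$: cyclic subgroups can be distorted in $P_j$ (e.g.\ when $P_j$ is nilpotent), so there is no undistortion hypothesis available to invoke against a tree-graded structure. Second, even if the cyclic orbit were quasi-geodesic, nothing in the relative hyperbolicity of $(\Cc,\dist_\Omega)$ with respect to $\Gamma\cdot\{P_k(p_0)\}$ forbids a quasi-geodesic contained in a single peripheral set from having both endpoints in one boundary face; the peripheral pieces themselves carry arbitrary geometry. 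Third, your appeal to Theorem~\ref{thm:boundary_faces}(2) for finite generation is circular here: that theorem assumes a peripheral family already exists, whereas Proposition~\ref{prop:rh_implies_isolation} is what is being used to produce one. (Finite generation of peripherals is part of the hypothesis package for relative hyperbolicity, so this is fixable, but not via that reference.)

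The paper bypasses the whole issue with a midpoint trick that reuses the quasi-convexity you already established. Take any $g_n\in P_j$ with $\dist_\Omega(p_0,g_n(p_0))\to\infty$ and let $m_n$ be the Hilbert midpoint of $[p_0,g_n(p_0)]$. Since $m_n$ lies on a geodesic with endpoints in $P_j(p_0)$, the quasi-convexity bound gives $m_n\in\Nc_\Omega(P_j(p_0),t)$, so some $h_n\in P_j$ brings $m_n$ into a fixed ball. Passing to a subsequence, $h_n(m_n)\to m\in\Omega$ while $h_n(p_0)\to x$ and $h_ng_n(p_0)\to y$ in $\Lc_j$ (both escape because they are at distance $\to\infty$ from $h_n(m_n)$). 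Now $m\in(x,y)$ lies in $\Omega$, forcing $(x,y)\subset\Omega$ automatically, and hence $m\in\hat X_j$. No dynamical classification of $h$ and no exclusion of a parabolic degeneration is needed.
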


\begin{proof} For $n \geq 1$ let $Y_j^{(n)}$ be the points which are contained in the convex hull of at most $n$ points in $P_j(p_0)$.  By Carath\'eodory's convex hull theorem, $Y_j^{(n)} = Y_j^{(d)}$ for all $n \geq d$. Further, the closure of $Y_j^{(d)}$ in $\Omega$ contains $\hat{X}_j$. 

By Theorem \ref{thm:rh_ds} part \ref{thm:quasi-convexity} there exists $t > 0$ such that if $r \geq 1$ and $p,q \in \Cc \cap \Nc_\Omega( P_j(p_0), r)$, then $[p,q] \subset \Nc_\Omega(P_j(p_0), tr)$. 

We claim that 
\begin{equation}
\label{eqn:inclusion in induction}
Y_j^{(n)} \subset \Nc_\Omega( P_j(p_0), (n-1)t)
\end{equation}
for all $n \geq 2$. The base case follows by our choice of $t$. Suppose Equation~\eqref{eqn:inclusion in induction} holds for some $n$. If $p \in Y_j^{(n+1)}$, then there exist $p_1 \in P_j(p_0)$ and $p_2 \in Y_j^{(n)}$ such that $p \in [p_1, p_2]$. Then, by induction there exists $q \in P_j(p_0)$ with $\dist_\Omega(p_2, q) < (n-1)t$. Then 
$$
[p_1, q] \subset Y_j^{(2)} \subset \Nc_\Omega( P_j(p_0), t)
$$
and by Proposition~\ref{prop:Hausdorff_distance_between_lines}
$$
\dist_\Omega( p, [p_1,q]) \leq \dist_\Omega^{\Haus}([p_1,p_2],[p_1,q]) \leq \dist_\Omega(p_2,q) < (n-1)t.
$$ 
So $p \in \Nc_\Omega( P_j(p_0), nt)$. Since  $p \in Y_j^{(n+1)}$ was arbitrary, this completes the induction step. Thus Equation~\eqref{eqn:inclusion in induction} holds for all $n \geq 2$. 

Then 
$$
\hat{X}_j \subset \overline{Y_j^{(d)}} \subset \overline{\Nc_\Omega( P_j(p_0), (d-1)t)}
$$ 
and so
\begin{align*}
\hat{X_j} = P_j\cdot \left( \overline{\Bc_\Omega(p_0, (d-1)t)} \cap \hat{X}_j\right).
\end{align*}
Thus $P_j$ acts co-compactly on $\hat{X}_j$.

It remains to show that $\hat{X_j} \neq \emptyset$. Since $P_j$ is infinite and acts properly on $\Omega$, there exists $\{g_n\}$ in $P_j$ such that 
$$
\lim_{n \rightarrow \infty} \dist_\Omega( p_0, g_n(p_0)) = \infty.
$$
Let $m_n$ be the midpoint of $\left[p_0,g_n(p_0)\right]$ with respect to $\dist_\Omega$. By Equation \eqref{eqn:inclusion in induction},  
$$m_n \in Y^{(2)}_j \subset \Nc_{\Omega}(P_j (p_0) , t).$$
Thus there exists $\{h_n\}$ in $P_j$ such that $h_n(m_n) \in \Bc_{\Omega}(p_0,t)$. Then, up to passing to a subsequence, we can assume that $h_n(m_n) \to m \in \Omega$ and $h_n[p_0,g_n(p_0)] \to [x,y]$.  Then $x,y \in \Lc_j$ and so 
\begin{equation*}
m \in (x,y) \subset \hat{X_j}. \qedhere
\end{equation*}
\end{proof}

The fundamental lemma of geometric group theory implies that $(\Cc, \dist_\Omega)$ is relatively hyperbolic with respect to $\Gamma \cdot \{\hat{X}_1, \dots, \hat{X}_m\}$. In particular, by Proposition~\ref{prop:immediate_consequences_of_DS05} and Theorem~\ref{thm:boundary_faces} part (5) there exists $R > 0$ such that: if $1 \leq j \leq m$ and $x \in \partiali \hat{X}_j$, then 
\begin{equation}
\label{eqn:size_of_boundary_pieces}
\dist_{F_\Omega(x)}^{\Haus}\Big(   F_\Omega(x) \cap \partiali \Cc, F_\Omega(x) \cap \partiali \hat{X}_j  \Big) \leq R. 
\end{equation}

\begin{lemma} If $1 \leq j \leq m$, then 
\begin{equation*}
\dist_{F_\Omega(x)}^{\Haus}\Big( \hat{X}_j, X_j \Big) \leq R. 
\end{equation*}
Hence $P_j$ acts co-compactly on $X_j$ and $(\Cc, \dist_\Omega)$ is relatively hyperbolic with respect to $\Gamma \cdot \{X_1, \dots, X_m\}$.
\end{lemma}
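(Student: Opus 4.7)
The plan is to bound the Hausdorff distance between $\hat{X}_j$ and $X_j$ by $R$, and then deduce both the cocompactness of the $P_j$-action on $X_j$ and the relative hyperbolicity of $\Cc$ with respect to $\Gamma \cdot \{X_1, \dots, X_m\}$.

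The first step is a face-matching lemma: for every $y \in \limset(P_j) \cap \partiali\Cc$ there exists $y' \in \Lc_j$ with $F_\Omega(y') = F_\Omega(y)$. Indeed, by definition $y = \lim g_n(q)$ for some $q \in \Omega$ and some sequence $\{g_n\} \subset P_j$; since $P_j$ acts properly and $y \in \partial\Omega$, the points $g_n(p_0)$ leave every compact set, so after a subsequence $g_n(p_0) \to y'' \in \partial\Omega$. Because $p_0 \in \Cc$, we get $y'' \in \partiali\Cc$ and hence $y'' \in \Lc_j$. The Hilbert distances $\dist_\Omega(g_n(p_0), g_n(q)) = \dist_\Omega(p_0, q)$ are constant, so Proposition~\ref{prop:asymp_sequences} yields $F_\Omega(y'') = F_\Omega(y)$, and I set $y' := y''$.

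Next I would show $\dist_\Omega^{\Haus}(\hat{X}_j, X_j) \leq R$. Since $\Lc_j \subset \limset(P_j) \cap \partiali\Cc$, already $\hat{X}_j \subset X_j$. For the reverse bound, pick $w \in X_j$ and use Carath\'eodory to write $w \in \relint(\CH_\Omega(y_1, \dots, y_k))$ with $y_i \in \limset(P_j) \cap \partiali\Cc$ (after subdividing if necessary so that the relative interior contains $w$). By the face-matching step, choose $y_i' \in \Lc_j \subset \partiali\hat{X}_j$ with $F_\Omega(y_i') = F_\Omega(y_i)$. Applying Equation~\eqref{eqn:size_of_boundary_pieces} at the boundary point $y_i' \in \partiali\hat{X}_j$ to the point $y_i \in F_\Omega(y_i') \cap \partiali\Cc$ produces $z_i \in F_\Omega(y_i') \cap \partiali\hat{X}_j$ with $\dist_{F_\Omega(y_i)}(y_i, z_i) \leq R$; in particular $F_\Omega(z_i) = F_\Omega(y_i)$ for each $i$. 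Since $w \in \Omega$ we have $F_\Omega(w) = \Omega$, so Proposition~\ref{prop:Hausdorff_distance_between_ch} furnishes $w' \in \CH_\Omega(z_1, \dots, z_k) \cap \Omega$ with $\dist_\Omega(w, w') \leq R$, and the $z_i \in \overline{\hat{X}_j}$ together with convexity force $\CH_\Omega(z_1, \dots, z_k) \cap \Omega \subset \hat{X}_j$. Thus $X_j \subset \Nc_\Omega(\hat{X}_j, R)$, establishing the Hausdorff bound.

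From the Hausdorff bound the two remaining conclusions are essentially formal. The set $X_j$ is non-empty (it contains $\hat{X}_j$), closed, convex, and $P_j$-invariant (since $P_j$ preserves both $\limset(P_j)$ and $\partiali\Cc$). If $K \subset \hat{X}_j$ is compact with $P_j \cdot K = \hat{X}_j$, then $X_j \subset P_j \cdot \overline{\Nc_\Omega(K, R)}$, so $X_j = P_j \cdot (X_j \cap \overline{\Nc_\Omega(K, R)})$ and the bracketed set is compact because Hilbert balls are relatively compact; hence $P_j$ acts cocompactly on $X_j$ and $(\Omega, X_j, P_j)$ is a naive convex co-compact triple. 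Finally, the relative hyperbolicity of $(\Cc, \dist_\Omega)$ with respect to $\Gamma \cdot \{\hat{X}_1, \dots, \hat{X}_m\}$ transfers to $\Gamma \cdot \{X_1, \dots, X_m\}$ because relative hyperbolicity in the tree-graded sense is stable under $\Gamma$-equivariantly enlarging each peripheral subset by a uniformly bounded Hausdorff amount: the ultralimits of $\hat{X}_j$ and $X_j$ in any asymptotic cone coincide, so the tree-gradedness is preserved.

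The main obstacle is the first (face-matching) step paired with the convex-hull comparison of Proposition~\ref{prop:Hausdorff_distance_between_ch}: once every vertex of a Carath\'eodory decomposition of $w \in X_j$ can be moved to a nearby vertex of $\hat{X}_j$ lying in the same open face, the Hilbert-metric bound inside $\Omega$ is forced. The cocompactness upgrade and the transfer of relative hyperbolicity then amount to bookkeeping with $\Gamma$-equivariant bounded neighborhoods.
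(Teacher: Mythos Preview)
Your proof is correct and follows essentially the same approach as the paper: the face-matching step (using Proposition~\ref{prop:asymp_sequences} on $g_n(p_0)$ versus $g_n(q)$), the application of Equation~\eqref{eqn:size_of_boundary_pieces} to move each vertex into $\partiali\hat{X}_j$ within the same face, and the use of Proposition~\ref{prop:Hausdorff_distance_between_ch} to transfer the bound to $w$ are exactly what the paper does. Your justification of the final relative-hyperbolicity transfer via coinciding ultralimits is more explicit than the paper, which leaves that step implicit.
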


\begin{proof} By definition $\hat{X}_j \subset X_j$ and by the previous lemma, $P_j$ acts co-compactly on $\hat{X_j}$. Thus it suffices to show that 
$$
X_j \subset \overline{\Nc_\Omega(\hat{X}_j, R)}. 
$$
Fix $q \in X_j$. Then there exist $q_1,\dots, q_m \in \Lc_\Omega(P_j) \cap \partiali \Cc$ where 
$$
q \in {\rm ConvHull}_\Omega(q_1,\dots,q_m).
$$
For each $1 \leq i \leq m$, there exist $p_i \in \Omega$ and a sequence $\{\gamma_{i,n}\}_{n \geq 1}$ in $P_j$ with $\gamma_{i,n}(p_i) \rightarrow q_i$. Passing to subsequences we can suppose that 
$$
\gamma_{i,n}(p_0) \rightarrow q_i^\prime \in \Lc_j \subset \partiali \hat{X}_j
$$
 for all $1 \leq i \leq m$. By Proposition~\ref{prop:asymp_sequences}, $F_\Omega(q_i) = F_\Omega(q_i^\prime)$. So by Equation~\eqref{eqn:size_of_boundary_pieces} there exists $q_i^{\prime\prime} \in \partiali \hat{X}_j$ such that 
$\dist_{F_\Omega(q_i)}(q_i, q_i^{\prime\prime}) \leq R.$
Then by Proposition~\ref{prop:Hausdorff_distance_between_ch} 
\begin{align*}
\dist_\Omega(q, \hat{X}_j) & \leq \dist_\Omega^{\Haus}\Big({\rm ConvHull}_\Omega(q_1,\dots,q_m) \cap \Omega, {\rm ConvHull}_\Omega(q_1^{\prime\prime},\dots,q_m^{\prime\prime}) \cap \Omega \Big)\\
&  \leq \max_{1 \leq i \leq m} \dist_{F_\Omega(q_i)}(q_i, q_i^{\prime\prime}) \leq R.  \qedhere
\end{align*}
\end{proof} 

This finishes the proof of Proposition \ref{prop:rh_implies_isolation}.

\section{Co-compact boundary actions}\label{sec: co-compact boundary actions}

Given a naive convex co-compact triple $(\Omega, X, G)$, it seems natural to ask if $G$ acts co-compactly on $\partial \Omega - F_\Omega(\partiali X)$ or more generally on $\partiali\Cc - F_\Omega(\partiali X)$ when $\Cc \subset \Omega$ is a closed $G$-invariant convex subset containing $X$. The next theorem gives a sufficient condition for this to occur. 

\begin{theorem}\label{thm:co_compact} Suppose that $(\Omega, X,G)$ is a naive convex co-compact triple and $\Cc \subset \Omega$ is a closed $G$-invariant convex subset containing $X$ with the property: if $x \in \partiali X$ and $[x,y] \subset \partiali\Cc$, then $[x,y] \subset F_\Omega(\partiali X)$. Then $G$ acts co-compactly on $\partiali \Cc - F_\Omega(\partiali X)$.
\end{theorem}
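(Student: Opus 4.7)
The plan is to argue by sequential compactness, using a nearest-point projection to $X$ together with the cocompactness of $G$ on $X$.

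I would first reformulate the hypothesis as a \emph{visibility condition}: if $y \in \partiali \Cc \setminus F_\Omega(\partiali X)$ and $x \in \partiali X$, then $(x,y) \subset \Cc$. Indeed, if $(x,y)$ were contained in $\partial \Omega$, then convexity of $\overline \Cc$ would give $[x,y] \subset \partiali \Cc$; the hypothesis then forces $[x,y] \subset F_\Omega(\partiali X)$, contradicting $y \notin F_\Omega(\partiali X)$. From this I would deduce an \emph{escape lemma}: if $y \in \partiali \Cc \setminus F_\Omega(\partiali X)$ and $p_k \in \Cc$ with $p_k \to y$, then $\dist_\Omega(p_k, X) \to \infty$. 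The proof uses Proposition~\ref{prop:asymp_sequences}: any bounded-distance sequence $x_k \in X$ with $\dist_\Omega(p_k, x_k) \leq M$ would, after subsequences, produce a limit $x \in \overline X$ with $F_\Omega(x) = F_\Omega(y)$; the cases $x \in X$ (contradicting $p_k \to \partial \Omega$) and $x \in \partiali X$ (contradicting $y \notin F_\Omega(\partiali X)$) are both impossible.

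Next I would take an arbitrary sequence $\{y_n\} \subset \partiali\Cc \setminus F_\Omega(\partiali X)$ and build translating elements as follows. Fix $p_0 \in X$ and a compact set $K \subset X$ with $G \cdot K = X$. For large $n$, the escape lemma provides $q_n \in [p_0, y_n)$ with $\dist_\Omega(q_n, X) = 1$; pick a nearest point $\bar q_n \in X$ with $\dist_\Omega(q_n, \bar q_n) = 1$ and an element $g_n \in G$ with $g_n \bar q_n \in K$. After passing to subsequences,
\begin{align*}
g_n \bar q_n \to \bar q \in K, \quad g_n q_n \to q \in \Omega, \quad g_n p_0 \to z \in \overline X, \quad g_n y_n \to y \in \partiali \Cc,
\end{align*}
with $\dist_\Omega(q, \bar q) = 1$ and $q \in [z, y]$, which forces $(z, y) \subset \Omega$ by Observation~\ref{obs:faces}.

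It remains to show $y \notin F_\Omega(\partiali X)$. Suppose for contradiction $y \in F_\Omega(w)$ for some $w \in \partiali X$. If $y = w$, then both endpoints of $[z, y]$ lie in $\overline X$, so convexity gives $[z,y] \subset \overline X$ and thus $q \in X$, contradicting $\dist_\Omega(q, X) = 1$. In the case $y \neq w$, I would combine the visibility rays $(g_n y_n, w) \subset \Cc$ (which collapse onto the boundary segment $(y, w) \subset F_\Omega(w) \subset \partial \Omega$) with Proposition~\ref{prop:Hausdorff_distance_between_lines} applied to the parallel pair $[z, y]$ and $[z, w]$ --- after arranging $z \in X$ (which holds when the sequence $\{\bar q_n\}$ stays bounded in $X$, and needs separate handling otherwise) --- to bound $\dist_\Omega(q, X) \leq \dist_{F_\Omega(y)}(y, w)$, and then to choose the constant $1$ (or more generally a small $r > 0$) so as to contradict this.

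The main obstacle is precisely the final case $y \in F_\Omega(w) \setminus \{w\}$. The subtle point is to extract a uniform incompatibility between the constant-distance condition $\dist_\Omega(q_n, X) = 1$ and the structure of $F_\Omega(y) \cap \partiali X$, using both the visibility property and the convergence $g_n y_n \to y$. This is precisely where the ``closest subset'' projection map foreshadowed in the section outline should do the heavy lifting.
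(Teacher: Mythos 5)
Your preliminary reductions are correct: the visibility reformulation of the hypothesis, the escape lemma (which in fact follows from Proposition~\ref{prop:asymp_sequences} alone), and the case $y=w$ all check out, and the sequential-compactness framing could in principle be made rigorous since the hypothesis implies $F_\Omega(\partiali X)\cap\partiali\Cc$ is closed in $\partiali\Cc$. But there is a genuine gap at exactly the point you flag, and the fix you sketch cannot work. From Proposition~\ref{prop:Hausdorff_distance_between_lines} you can indeed obtain $\dist_\Omega(q,X)\le \dist_{F_\Omega(y)}(y,w)$ when $z\in X$, but this is an upper bound by a quantity you do not control: your normalization forces $\dist_\Omega(q,X)=1$ (or $r$), and the inequality $r\le \dist_{F_\Omega(y)}(y,w)$ is perfectly consistent --- shrinking $r$ only makes it easier to satisfy, so no contradiction can be extracted. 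Worse, the assertion you are trying to prove at that stage, namely that the limit $y$ avoids $F_\Omega(\partiali X)$, is simply false for the distance-$r$ normalization. If $y_n\to\tilde y$ where $\tilde y\in F_\Omega(w)$ for some $w\in\partiali X$ with $\dist_{F_\Omega(w)}\bigl(\tilde y,\, F_\Omega(w)\cap\partiali X\bigr)>r$, then along $[p_0,\tilde y)$ the distance to $X$ eventually exceeds $r$, so the crossing points $q_n$ and the nearest points $\bar q_n$ remain in a compact set; by properness of the action the $g_n$ then range over a finite set, and after passing to a subsequence $y=g\tilde y\in F_\Omega(\partiali X)$. So the normalization itself, not merely the final estimate, is wrong: anchoring at a fixed level set of $\dist_\Omega(\cdot,X)$ does not track $y_n$ once it degenerates toward a face of $\partiali X$ of ``width'' larger than $r$.

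The missing idea is the one the paper's proof is built on: anchor at the closest-point projection rather than at a level set of the distance. The paper extends $\pi_X$ to ideal points by taking limits of projections $x_n\in\pi_X(p_n)$ with $p_n\to y$, and exploits Observation~\ref{obs:closest_pt_proj} part (2): every point of $[x_n,p_n]$ still projects to $x_n$. This produces points whose distance to $X$ \emph{equals} their distance to an anchor converging to a point $x\in X$, hence can be made larger than any prescribed constant --- in particular larger than $\dist_{F_\Omega(w)}(y,w)$ --- and this genuinely contradicts the bound from Proposition~\ref{prop:Hausdorff_distance_between_lines}; that is Lemma~\ref{lem:empty_closest_pt} ($\pi_X(y)=\emptyset$ for $y\in F_\Omega(\partiali X)\cap\partiali\Cc$). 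Your visibility condition and escape argument are essentially what prove the complementary Lemma~\ref{lem:non-empty-projection} ($\pi_X(y)$ non-empty and bounded off $F_\Omega(\partiali X)$), and a diagonal argument (Lemma~\ref{lem:compact_preimage}) shows $\{y\in\partiali\Cc : \pi_X(y)\cap K\ne\emptyset\}$ is compact. That set is then exhibited directly as a compact fundamental set for the action, so the paper needs no sequential-compactness-with-translates argument at all. To repair your proposal you would have to replace the distance-$1$ points $q_n$ by projection-anchored data in this way, at which point you have reconstructed the paper's proof.
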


Letting $\Cc = \Omega$ and changing notation, we obtain: 

\begin{corollary} Suppose that $(\Omega, \Cc, \Gamma)$ is a naive convex co-compact triple with the property: if $x \in \partiali \Cc$ and $[x,y] \subset \partial\Omega$, then $[x,y] \subset F_\Omega(\partiali \Cc)$. Then $\Gamma$ acts co-compactly on $\partial \Omega - F_\Omega(\partiali \Cc)$.
\end{corollary}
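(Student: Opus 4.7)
The plan is to derive the corollary as an immediate specialization of Theorem~\ref{thm:co_compact}. In the notation of that theorem, I take the naive convex co-compact triple to be $(\Omega, X, G) := (\Omega, \Cc, \Gamma)$ from the corollary — so the theorem's $X$ plays the role of $\Cc$ here and $G$ plays the role of $\Gamma$ — and I choose the ambient closed $G$-invariant convex subset of $\Omega$ containing $X$ to be $\Omega$ itself. This is the only serious choice in the proof; everything afterwards is a matter of reading off the theorem's hypotheses and conclusion.

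The routine verification is that this choice satisfies the theorem's requirements. The set $\Omega$ is trivially closed in $\Omega$, is convex by the standing assumption that it is a properly convex domain, is $\Gamma$-invariant because $\Gamma \leq \Aut(\Omega)$, and contains $\Cc$. The key observation is that when we take the outer convex subset of Theorem~\ref{thm:co_compact} to be $\Omega$ itself, its ideal boundary (in the paper's sense) is $\overline{\Omega}\cap \partial\Omega = \partial\Omega$. Hence the hypothesis of Theorem~\ref{thm:co_compact} — that any line segment $[x,y]$ lying in the ideal boundary of the outer convex subset, with $x \in \partiali\Cc$, is already contained in $F_\Omega(\partiali\Cc)$ — reduces in this case to precisely the hypothesis in the corollary.

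The conclusion of Theorem~\ref{thm:co_compact} then specialises to the statement that $\Gamma$ acts co-compactly on $\partiali\Omega - F_\Omega(\partiali\Cc) = \partial\Omega - F_\Omega(\partiali\Cc)$, which is exactly what the corollary asserts. The reason this is a substantively different statement from co-compactness on $\partiali\Cc - F_\Omega(\partiali\Cc)$ is that the set $\partial\Omega - F_\Omega(\partiali\Cc)$ may well contain boundary points of $\Omega$ that are not even in $\overline{\Cc}$; handling these extra points is precisely the reason one needs to apply Theorem~\ref{thm:co_compact} with $\Omega$ as the outer convex subset rather than $\Cc$. I do not foresee any obstacle beyond matching notation: once Theorem~\ref{thm:co_compact} is in hand, there is no further argument to give, since the outer convex subset is allowed in the theorem to be any closed $G$-invariant convex subset containing $X$, and $\Omega$ itself trivially qualifies.
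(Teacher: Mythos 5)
Your proposal is correct and is exactly the paper's own proof: the paper obtains the corollary from Theorem~\ref{thm:co_compact} by ``letting $\Cc = \Omega$ and changing notation,'' which is precisely your substitution of $(\Omega, \Cc, \Gamma)$ for the theorem's $(\Omega, X, G)$ with the outer convex set taken to be $\Omega$ itself. Your routine verifications (that $\Omega$ is a closed $\Gamma$-invariant convex subset of itself containing $\Cc$, that $\partiali \Omega = \partial\Omega$, and that the segment hypothesis and conclusion then match the corollary verbatim) are exactly what the paper leaves implicit.
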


\begin{remark}
\label{rem:boundary-action-and-parabolic-point}
Notice that if $(\Omega,\Cc,\Gamma)$ is a naive convex co-compact triple, $\Xc$ is a peripheral family of $(\Omega,\Cc,\Gamma)$, and $X \in \Xc$, then $(\Omega, X, \Stab_{\Gamma}(X))$ and $\Cc$ satisfy the conditions in Theorem~\ref{thm:co_compact}  (see Theorem \ref{thm:boundary_faces} parts (6) and (8)). We will use this in the proof of (2) $\Rightarrow$ (1) in Theorem \ref{thm:main} (see Section \ref{sec:proof_main} below) to show that non-conical points are bounded parabolic points. \end{remark}

For the rest of the section fix $\Omega$, $\Cc$, $X$, $G$ satisfying the hypothesis of the theorem. The key idea will be to study the following  ``closest point projection.'' 

\begin{definition} Define
\begin{align*}
\pi_X : \overline{\Cc} \rightarrow \{\text{subsets of $X$}\}
\end{align*}
as follows: 
\begin{itemize}
\item For $p \in \Cc$ let 
\begin{align*}
\pi_X(p) := \{ x \in X : \dist_\Omega(p,x) = \dist_\Omega(p,X)\}
\end{align*}
denote the points in $X$ closest to $p$. 
\item For $y \in \partiali\Cc$ let $\pi_X(y) \subset X$ denote the set of all points $x \in X$ where there exist sequences $\{p_n\}$ and $\{x_n\}$ such that $p_n \in \Cc$, $x_n \in \pi_X(p_n)$,  $p_n \rightarrow y$, and $x_n \rightarrow x$. 
\end{itemize}
\end{definition}

We start by observing the following. 

\begin{observation}\label{obs:closest_pt_proj} \ 
\begin{enumerate}
\item If $p \in \Cc$, then $\pi_X(p)$ is a compact convex subset of $X$. 
\item If $p \in \Cc$ and $x \in \pi_X(p)$, then $x \in \pi_X(q)$ for all $q \in [x,p]$.
\item If $g \in G$, then $\pi_X \circ g = g \circ \pi_X$. 
\end{enumerate}
\end{observation}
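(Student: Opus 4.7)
The plan is to verify each of the three items using only standard properties of the Hilbert metric and the fact that $G \leq \Aut(\Omega)$ acts by Hilbert isometries.

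For (1), the argument breaks into compactness and convexity. Compactness will follow from the observation that $(\Omega, \hil)$ is a proper metric space and $X$ is a closed subset, so the continuous function $x \mapsto \hil(p,x)$ attains its infimum $r := \hil(p, X)$ on $X$ and the set of minimizers is closed and bounded. For convexity, I would use the classical fact that Hilbert metric balls $\overline{\Bc_\Omega(p, r)}$ are convex in $\Omega$ (which is immediate from the projective definition of the cross ratio). Given $x_1, x_2 \in \pi_X(p)$, the segment $[x_1, x_2]$ lies in $X$ (since $X$ is convex) and in $\overline{\Bc_\Omega(p, r)}$, so every point of $[x_1, x_2]$ realizes the minimum distance $r$.

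For (2), the key is that the projective line segment $[x, p]$ is a Hilbert geodesic, so any $q \in [x, p]$ satisfies $\hil(p,x) = \hil(p,q) + \hil(q,x)$. If some $x' \in X$ had $\hil(q, x') < \hil(q, x)$, then the triangle inequality would give $\hil(p, x') \leq \hil(p, q) + \hil(q, x') < \hil(p, x) = \hil(p, X)$, contradicting $x \in \pi_X(p)$.

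For (3), I would first handle $p \in \Cc$: since $g \in G$ is a Hilbert isometry preserving $X$, we have $\hil(gp, gx) = \hil(p, x)$ and $\hil(gp, X) = \hil(gp, gX) = \hil(p, X)$, so $x \in \pi_X(p)$ iff $gx \in \pi_X(gp)$. For $y \in \partiali \Cc$, I would extend the equivariance by continuity: $g$ extends to a homeomorphism of $\Pb(\Rb^d)$, and the definition of $\pi_X$ on ideal points is given by sequential limits, so applying $g$ to the approximating sequences $p_n \to y$ and $x_n \in \pi_X(p_n)$ and invoking the case just proved gives $\pi_X(gy) = g\,\pi_X(y)$.

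I expect no serious obstacle here; the only point requiring mild care is convexity of Hilbert balls in (1), which is standard but worth citing explicitly, and the sequential extension in (3), which requires only that $g$ is a homeomorphism of the closure $\overline{\Omega}$.
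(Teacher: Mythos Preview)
Your proposal is correct and follows essentially the same approach as the paper: the paper also writes $\pi_X(p)$ as the intersection of $X$ with a closed Hilbert ball (invoking convexity and compactness of such balls), appeals to the geodesicity of $[x,p]$ for part (2), and declares part (3) to be ``by definition.'' Your treatment of part (3) is more careful than the paper's, explicitly separating the interior and ideal-boundary cases, but the content is the same.
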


\begin{proof} Notice that if $p \in \Cc$, then 
$$
\pi_X(p) = X \cap \{ x \in \Omega : \dist_\Omega(p,x) \leq \dist_\Omega(p,X)\}.
$$
Part (1) is then a consequence of the fact that closed metric balls in the Hilbert metric are convex and compact (see Proposition~\ref{prop:Hausdorff_distance_between_lines} above). Part (2) follows from the fact that $[x,p]$ can be parametrized to be a geodesic in the Hilbert metric and part (3) is by definition. 
\end{proof} 

\begin{lemma}\label{lem:non-empty-projection} If $y \in \partiali\Cc - F_\Omega(\partiali X)$, then $\pi_X(y) \subset X$ is a non-empty bounded set. \end{lemma}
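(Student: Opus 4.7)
The plan is to reduce both non-emptiness and boundedness to a single key claim: for any sequence $\{p_n\} \subset \Cc$ with $p_n \to y$ and any choice $x_n \in \pi_X(p_n)$, the sequence $\{x_n\}$ lies in a bounded subset of $X$ with respect to the Hilbert metric. Non-emptiness of $\pi_X(y)$ will then follow immediately by passing to a convergent subsequence, and boundedness will follow by a diagonal argument.

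To prove the claim, I argue by contradiction. If $\{x_n\}$ is unbounded, then after passing to a subsequence $x_n \to x_\infty$ in the projective topology for some $x_\infty \in \partiali X$. Since $y \notin F_\Omega(\partiali X)$ and $x_\infty \in \partiali X$, the segment $[x_\infty, y]$ lies in $\overline{\Cc}$ by convexity, and either $(x_\infty, y) \subset \partial\Omega$ or $(x_\infty, y) \cap \Omega \neq \emptyset$. In the former case, $[x_\infty, y] \subset \partiali\Cc$, and the standing hypothesis on $\Cc$ forces $[x_\infty, y] \subset F_\Omega(\partiali X)$, contradicting $y \notin F_\Omega(\partiali X)$. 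In the latter case, pick $z \in (x_\infty, y) \cap \Omega$; in an affine chart write $z = t\, x_\infty + (1-t)\, y$ and set $z_n := t\, x_n + (1-t)\, p_n \in [x_n, p_n] \subset \Cc$, so $z_n \to z$ in $\Omega$. Since $x_n \in \pi_X(p_n)$ and $z_n \in [x_n, p_n]$, Observation~\ref{obs:closest_pt_proj}(2) gives $x_n \in \pi_X(z_n)$, hence $\dist_\Omega(z_n, X) = \dist_\Omega(z_n, x_n) \to \infty$ (because $z_n \to z \in \Omega$ while $x_n \to x_\infty \in \partial \Omega$). However, for any fixed $x_0 \in X$, continuity of the Hilbert metric yields $\dist_\Omega(z_n, X) \leq \dist_\Omega(z_n, x_0) \to \dist_\Omega(z, x_0) < \infty$, a contradiction.

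Non-emptiness of $\pi_X(y)$ is then immediate: pick any $p_n \to y$ and $x_n \in \pi_X(p_n)$; by the claim $\{x_n\}$ has a subsequence converging in the Hilbert metric to some $x \in X$, whence $x \in \pi_X(y)$ by definition. For boundedness, suppose to the contrary that $\pi_X(y)$ contains a sequence $\{x_k\}$ whose projective limit $x_\infty$ lies in $\partiali X$. By the definition of $\pi_X(y)$, each $x_k$ arises as a limit $x_k = \lim_n x_{k,n}$ along a sequence $p_{k,n} \to y$ with $x_{k,n} \in \pi_X(p_{k,n})$. A standard diagonal extraction produces $p_k := p_{k,n_k} \to y$ and $x_k' := x_{k,n_k} \to x_\infty \in \partiali X$ with $x_k' \in \pi_X(p_k)$, directly violating the claim.

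The main obstacle is the dichotomy in the proof of the claim: ruling out the case $(x_\infty, y) \subset \partial\Omega$ is precisely where the standing hypothesis on $\Cc$ enters, and it is this step that converts the condition $y \notin F_\Omega(\partiali X)$ into the geometric statement that closest-point projections of sequences approaching $y$ cannot escape to the ideal boundary of $X$.
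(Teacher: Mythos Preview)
Your proof is correct and follows essentially the same approach as the paper's. The paper compresses your explicit dichotomy into the single line ``our hypothesis on $X$ says that $(x,y) \subset \Omega$'' (using that $(x,y)\cap\Omega\neq\emptyset$ forces $(x,y)\subset\Omega$), and it leaves the diagonal extraction for boundedness implicit in the opening reduction ``it suffices to fix sequences \ldots\ and prove that $x\in X$,'' but the core contradiction argument---picking a point on $(x_\infty,y)\cap\Omega$, approximating it along $[x_n,p_n]$, and using Observation~\ref{obs:closest_pt_proj}(2) to derive $\dist_\Omega(\cdot,X)\to\infty$---is identical.
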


\begin{proof} It suffices to fix sequences $\{p_n\}$ and $\{x_n\}$ such that $p_n \in \Cc$, $x_n \in \pi_X(p_n)$, $p_n \rightarrow y$, and $x_n \rightarrow x \in \overline{X}$, then prove that $x \in X$. 

Suppose for a contradiction that $x \in \partiali X$. Since $y \in \partiali\Cc - F_\Omega(\partiali X)$ our hypothesis on $X$ says that $(x,y) \subset \Omega$. Fix $v \in (x,y)$. Then pick a sequence $v_n \in (x_n,p_n)$ with $v_n \rightarrow v$. Then $x_n \in \pi_X(v_n)$ by Observation~\ref{obs:closest_pt_proj} part (2). However then
\begin{align*}
\dist_\Omega(v,X) = \lim_{n \rightarrow \infty} \dist_\Omega(v_n,X) =  \lim_{n \rightarrow \infty} \dist_\Omega(v_n,x_n) = \lim_{n \rightarrow \infty} \dist_\Omega(v,x_n) =\infty
\end{align*}
and we have a contradiction.  
\end{proof}

\begin{lemma}\label{lem:empty_closest_pt} If $y \in \partiali\Cc \cap F_\Omega(\partiali X)$, then $\pi_X(y) = \emptyset$. \end{lemma}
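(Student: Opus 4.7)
The plan is to argue by contradiction. Suppose $x \in \pi_X(y)$, so there exist sequences $p_n \in \Cc$ with $p_n \to y$ and $x_n \in \pi_X(p_n)$ with $x_n \to x \in X$. Since $y \in F_\Omega(\partiali X)$, fix $z \in \partiali X$ with $F_\Omega(z) = F_\Omega(y)$, and let $D := \dist_{F_\Omega(y)}(y,z)$, which is finite.

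The key geometric input is Proposition~\ref{prop:Hausdorff_distance_between_lines} applied with $p_1 = q_1 = x$ and $p_2 = y$, $q_2 = z$: the hypotheses $(x,y) \subset \Omega$, $F_\Omega(x) = \Omega$, and $F_\Omega(y) = F_\Omega(z)$ all hold immediately, so
\begin{align*}
\dist_\Omega^{\Haus}\bigl([x,y) ,\, [x,z) \bigr) \le D.
\end{align*}
Moreover, since $x \in X$, $z \in \overline{X}$, and $X$ is a closed convex subset of $\Omega$, the half-open segment $[x,z)$ lies entirely in $X$.

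To derive the contradiction, fix $R > D + 1$. Because $p_n \to y \in \partial\Omega$ while $x_n \to x \in \Omega$, $\dist_\Omega(p_n, x_n) \to \infty$, so for all large $n$ there is a point $m_n \in [x_n, p_n]$ with $\dist_\Omega(x_n, m_n) = R$. By Observation~\ref{obs:closest_pt_proj}(2), $x_n \in \pi_X(m_n)$, so $\dist_\Omega(m_n, X) = R$. After passing to a subsequence $m_n \to m \in \overline{\Omega}$, the boundedness of $\dist_\Omega(x_n, m_n) = R$ together with Proposition~\ref{prop:asymp_sequences} forces $F_\Omega(m) = F_\Omega(x) = \Omega$, so $m \in \Omega$; writing $m_n = (1-t_n)x_n + t_n p_n$ and passing to a further subsequence with $t_n \to t \in [0,1)$ shows $m \in (x,y)$. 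The Hausdorff bound above then furnishes $w \in [x,z) \subset X$ with $\dist_\Omega(m,w) \le D$, whence
\begin{align*}
R = \dist_\Omega(m_n, X) \;\le\; \dist_\Omega(m_n, w) \;\longrightarrow\; \dist_\Omega(m, w) \;\le\; D,
\end{align*}
contradicting $R > D + 1$.

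The main subtle step is calibrating the auxiliary point $m_n$ along $[x_n,p_n]$: one must walk off $x_n$ by a fixed finite distance $R > D+1$ so that the limit $m$ stays in the open domain $\Omega$ (via Proposition~\ref{prop:asymp_sequences}) and thereby witnesses the Hausdorff-closeness of $[x,y)$ and $[x,z)$, while being far enough from $X$ that $x_n$ could not genuinely have been the nearest point. Everything else is routine bookkeeping with the distance estimates collected in Section~\ref{sec: preliminaries}.
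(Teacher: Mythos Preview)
Your proof is correct and follows essentially the same strategy as the paper's: both invoke Proposition~\ref{prop:Hausdorff_distance_between_lines} to bound the Hausdorff distance between $[x,y)$ and a segment $[x,z)\subset X$, and then use Observation~\ref{obs:closest_pt_proj}(2) on an intermediate point of $[x_n,p_n]$ to force the distance to $X$ to be simultaneously large and at most $D$. The only cosmetic difference is that the paper first fixes the limit point $q\in(x,y)$ with $\dist_\Omega(q,x)>D$ and then chooses $q_n\to q$, whereas you fix the Hilbert distance $R$ along $[x_n,p_n]$ and pass to a limit afterward.
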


\begin{proof} Suppose for a contradiction that there exist sequences $\{p_n\}$ and $\{x_n\}$ where $p_n \in \Cc$, $x_n \in \pi_{X}(p_n)$, $p_n \rightarrow y$, and $x_n \rightarrow x \in X$. 

By hypothesis, there exists $w \in \partiali X$ with $y \in F_\Omega(w)$. Fix $q \in (x,y)$ such that $\dist_\Omega(q,x) > \dist_{F_\Omega(w)}(y,w)$. Then fix $q_n \in (x_n, p_n)$ such that $q_n \rightarrow q$. By Observation~\ref{obs:closest_pt_proj} part (2)
\begin{align*}
\dist_\Omega(q_n, X) = \dist_\Omega(q_n,x_n)
\end{align*}
and so
\begin{align*}
\dist_\Omega(q, X) =\lim_{n \rightarrow \infty} \dist_\Omega(q_n, X)= \dist_\Omega(q, x) > \dist_{F_\Omega(w)}(y,w).
\end{align*}
However this is impossible since Proposition~\ref{prop:Hausdorff_distance_between_lines} implies that
\begin{align*}
\dist_\Omega(q, X) \leq \dist_\Omega \left( q, (x,w) \right) \leq \dist_\Omega^{\Haus} \left( (x,y), (x,w) \right) \leq \dist_{F_\Omega(w)}(y,w).
\end{align*}
So we have a contradiction. 
\end{proof}

\begin{lemma}\label{lem:compact_preimage} If $K \subset X$ is compact, then the set 
\begin{align*}
\wh{K}:=\left\{ y \in \partiali\Cc: \pi_X(y) \cap K \neq \emptyset\right\}
\end{align*}
is compact. 
\end{lemma}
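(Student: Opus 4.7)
The plan is to show that $\wh{K}$ is closed; since $\wh{K}$ is a subset of $\partiali\Cc$, which is a closed subset of the compact space $\partial\Omega$, closedness will imply compactness.

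Fix a sequence $\{y_n\}$ in $\wh{K}$ with $y_n \to y \in \partiali\Cc$. I want to show $y \in \wh{K}$, i.e.\ that $\pi_X(y) \cap K \neq \emptyset$. By definition of $\wh{K}$, for each $n$ there exists $x_n \in \pi_X(y_n) \cap K$. Since $K$ is compact, after passing to a subsequence I may assume $x_n \to x$ for some $x \in K$.

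Unwinding the definition of $\pi_X(y_n)$: for each $n$ there exist sequences $\{p_{n,k}\}_{k \geq 1}$ in $\Cc$ and $\{x_{n,k}\}_{k \geq 1}$ in $X$ such that $x_{n,k} \in \pi_X(p_{n,k})$, $p_{n,k} \to y_n$ as $k \to \infty$, and $x_{n,k} \to x_n$ as $k \to \infty$. Fix any metric $\dist_\Pb$ on $\Pb(\Rb^d)$ (as in Section~\ref{sec:finding properly embedded simplices}) compatible with the topology. Using a diagonal argument, for each $n$ choose $k_n$ large enough that
\begin{align*}
\dist_\Pb(p_{n,k_n}, y_n) < 1/n \quad \text{and} \quad \dist_\Pb(x_{n,k_n}, x_n) < 1/n.
\end{align*}
Then $p_{n,k_n} \to y$ and $x_{n,k_n} \to x$ as $n \to \infty$, while $x_{n,k_n} \in \pi_X(p_{n,k_n})$ by construction. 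By the very definition of $\pi_X(y)$, this forces $x \in \pi_X(y)$, and since $x \in K$ we conclude $y \in \wh{K}$.

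There is no real obstacle here: the argument is just a straightforward diagonal extraction from the two-level definition of $\pi_X$ on the ideal boundary, together with the compactness of $K$ and of $\partiali\Cc$. The key conceptual point is simply that $\pi_X$ was defined via taking limits along arbitrary approaching sequences, which is exactly what a diagonal argument handles cleanly.
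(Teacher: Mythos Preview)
Your proof is correct and follows essentially the same approach as the paper: show that $\wh{K}$ is closed in the compact set $\partiali\Cc$ by taking $x_n \in \pi_X(y_n)\cap K$ converging to some $x\in K$ and then running a diagonal extraction on the defining sequences $p_{n,k}\to y_n$, $x_{n,k}\to x_n$ to witness $x\in\pi_X(y)$. The only cosmetic difference is that you make the diagonal step explicit via a metric $\dist_\Pb$, whereas the paper simply asserts the existence of suitable subsequences.
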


\begin{proof} Consider a sequence $\{y_n\}$ in $\wh{K}$ where $y_n \rightarrow y \in \partiali\Cc$. Fix $x_n \in \pi_X(y_n) \cap K$. Passing to a subsequence we can suppose that $x_n \rightarrow x \in K$. We can also find sequences $\{p_{n,m}\}$ and $\{x_{n,m}\}$ such that $p_{n,m} \in \Cc$, $x_{n,m} \in \pi_X(p_{n,m})$, 
\begin{align*}
y_n = \lim_{m \rightarrow \infty} p_{n,m} \quad \text{and} \quad x_n = \lim_{m \rightarrow \infty} x_{n,m}.
\end{align*}
Then we can pick subsequences $\{p_{n_j, m_j}\}$ and $\{x_{n_j, m_j}\}$ such that 
\begin{align*}
y = \lim_{j \rightarrow \infty} p_{n_j, m_j}\quad \text{and} \quad x = \lim_{j \rightarrow \infty} x_{n_j, m_j}.
\end{align*}
So $x \in \pi_X(y) \cap K$ and hence $y \in \wh{K}$. 
\end{proof}

\begin{proof}[Proof of Theorem~\ref{thm:co_compact}] Fix a compact set $K \subset X$ such that $G \cdot K = X$. Then Lemmas~\ref{lem:compact_preimage} and~\ref{lem:empty_closest_pt} imply that
\begin{align*}
\wh{K} := \{ y \in \partiali\Cc : \pi_X(y) \cap K \neq \emptyset\}
\end{align*}
is compact and contained in $\partiali\Cc - F_\Omega(\partiali X)$. 

We claim that 
\begin{align*}
G  \cdot \wh{K} = \partiali\Cc - F_\Omega(\partiali X).
\end{align*} 
Fix $y \in \partiali\Cc - F_\Omega(\partiali X)$. Then by Lemma~\ref{lem:non-empty-projection} there exists $x \in \pi_X(y)$. Then there exists $g \in G$ such that $g(x) \in K$. Then $g(x) \in  \pi_X(g(y))$ and so $g(y) \in \wh{K}$. So $y \in G \cdot \wh{K}$. Since $y$ was arbitrary, this proves the claim and the theorem. 
\end{proof}

\section{Basic properties of boundary quotients}
\label{sec:bdry_quotient}

For the rest of the section fix a properly convex domain $\Omega \subset \Pb(\Rb^d)$, a closed convex subset $\Cc \subset \Omega$, and a discrete subgroup $\Gamma \leq \Aut(\Omega)$ which preserves $\Cc$. Notice that we do not assume that $\Gamma$ acts co-compactly on $\Cc$. 

Also fix a $\Gamma$-invariant equivalence relation $\sim$ on $\partiali\Cc$ such that the set 
$$
R = \left\{ (x,y)  \in \partiali\Cc \times \partiali\Cc : x \sim y\right\}
$$ 
is closed and $\#(\eqv{\partiali\Cc}) \geq 3$. For each $x \in \partiali\Cc$, let $[x]$ denote the equivalence class of $x$ and let
$$
\Cc_x := \CH_\Omega([x]) \cap\Omega.
$$
Notice that it is possible for $\Cc_x$ to be empty. 

We consider two conditions:
\begin{enumerate}
\item We say that \emph{Condition (1) holds}, if whenever $x,y \in \partiali\Cc$ and $[x,y] \subset \partial \Omega$, then $x \sim y$. 
\item We say that \emph{Condition (2) holds} if there exist $r, D > 0$ such that: if $x \not\sim y$, then 
$$
\diam_\Omega\Big( \Nc_\Omega\left(\Cc_x,r\right) \cap \Nc_\Omega\left(\Cc_y,r\right) \Big) < D.
$$
\end{enumerate} 

We will prove the following results about the quotient space $\eqv{\partiali\Cc}$. These arguments are similar to analogous arguments of Choi~\cite{Choi_book} and Weisman~\cite{W2020}.

\begin{proposition}\label{prop:metrizable} The quotient $\eqv{\partiali\Cc}$ is a compact Hausdorff metrizable space. \end{proposition}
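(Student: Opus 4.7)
Since $\partiali\Cc=\overline{\Cc}\cap\partial\Omega$ is a closed subset of the compact metrizable space $\overline{\Omega}\subset\Pb(\Rb^d)$, it is itself compact and metrizable; fix a compatible metric on it. Let $q\colon\partiali\Cc\to\eqv{\partiali\Cc}$ denote the quotient map. Compactness of $\eqv{\partiali\Cc}$ is then immediate, as the continuous image of a compact space, so what remains is to establish the Hausdorff property and metrizability.

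For Hausdorffness I would use the standard fact that a compact Hausdorff space modulo a closed equivalence relation has Hausdorff quotient. The essential intermediate step is that $q$ is a closed map: for closed $F\subset\partiali\Cc$, the saturation $q^{-1}(q(F))$ equals the first-factor projection of the closed set $R\cap(\partiali\Cc\times F)$, and projection off a compact factor is a closed map. Given distinct classes $[x]\ne[y]$, the corresponding $\sim$-classes are disjoint closed subsets of $\partiali\Cc$, so normality provides disjoint open $U\supset[x]$, $V\supset[y]$. Closedness of $q$ makes $U_1:=\eqv{\partiali\Cc}\setminus q(\partiali\Cc\setminus U)$ and $V_1:=\eqv{\partiali\Cc}\setminus q(\partiali\Cc\setminus V)$ open neighborhoods of $[x]$ and $[y]$, and they are disjoint because $U\cap V=\emptyset$ forces every class $[z]$ to meet $\partiali\Cc\setminus U$ or $\partiali\Cc\setminus V$, hence $q(\partiali\Cc\setminus U)\cup q(\partiali\Cc\setminus V)=\eqv{\partiali\Cc}$.

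For metrizability I would apply Urysohn's theorem in the form: a compact Hausdorff space is metrizable if and only if its algebra of continuous real-valued functions is separable. Pullback along $q$ identifies $C(\eqv{\partiali\Cc})$ isometrically with the closed subalgebra $A\subset C(\partiali\Cc)$ of functions that are constant on every $\sim$-class. Since $\partiali\Cc$ is compact metrizable, $C(\partiali\Cc)$ is separable, and hence so is its closed subspace $A\cong C(\eqv{\partiali\Cc})$. A countable dense family $\{f_n\}\subset C(\eqv{\partiali\Cc})$ separates points (by Urysohn's lemma applied in the compact Hausdorff space $\eqv{\partiali\Cc}$, together with density), and then $[y]\mapsto(f_n([y]))_n$ embeds $\eqv{\partiali\Cc}$ into the metrizable space $\Rb^{\Nb}$.

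The main obstacle is the metrizability claim. A tempting direct construction---putting the Hausdorff distance of $\sim$-classes on $\eqv{\partiali\Cc}$---only captures upper semi-continuity of the decomposition from closedness of $R$ on a compact space, and not lower semi-continuity, so it need not induce the quotient topology. Passing through separability of the function algebra avoids this issue cleanly.
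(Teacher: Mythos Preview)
Your argument is correct. Compactness and Hausdorffness are handled exactly as in the paper (closed equivalence relation on a compact Hausdorff space; you just spell out the standard closed-map computation that the paper leaves implicit).

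For metrizability the two proofs diverge. The paper stays purely topological: starting from a countable basis $\{U_n\}$ of $\partiali\Cc$ closed under finite unions, it forms the saturated sets $U_n^*=\{x:[x]\subset U_n\}$ and checks directly that $\{\pi(U_n^*)\}$ is a countable basis for the quotient, then invokes Urysohn. Your route is functional-analytic: identify $C(\eqv{\partiali\Cc})$ with the closed subalgebra of $\sim$-invariant functions in the separable algebra $C(\partiali\Cc)$, deduce separability, and embed into $\Rb^{\Nb}$. Both are standard and essentially equivalent (for compact Hausdorff spaces, second countability and separability of $C(X)$ are the same condition). The paper's construction is more hands-on and exhibits an explicit basis, while yours is slicker and avoids the bookkeeping with finite unions; your closing remark about why the Hausdorff-distance shortcut fails is a nice touch that the paper does not make.
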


\begin{proposition}\label{prop:convergence_grp} If Condition (1) holds, then $\Gamma$ acts as a convergence group on $\eqv{\partiali\Cc}$.
\end{proposition}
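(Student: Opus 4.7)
The plan is to extract a subsequential limit of $\{g_n\}$ in $\Pb(\End(\Rb^d))$ and propagate the convergence from $\partiali\Cc$ down to the quotient. Fix $p_0 \in \Cc$. Since $\Gamma$ is a discrete subgroup of $\Aut(\Omega)$ and the latter acts properly on $(\Omega,\dist_\Omega)$, neither $\{g_n(p_0)\}$ nor $\{g_n^{-1}(p_0)\}$ accumulates inside $\Omega$. Passing to a subsequence I would obtain $g_n(p_0) \to x$ and $g_n^{-1}(p_0) \to y$ in $\partiali\Cc = \overline{\Cc} \cap \partial\Omega$ (using that $\Cc$ is closed and $\Gamma$-invariant), and $g_n \to T \in \Pb(\End(\Rb^d))$. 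Proposition~\ref{prop:dynamics_of_automorphisms_1} then gives ${\rm image}(T) \subset \Span F_\Omega(x)$, $\Pb(\ker T) \cap \Omega = \emptyset$, and $y \in \Pb(\ker T)$. The claim will be that $([x],[y])$ is the sink/source pair for $\{g_n\}$.

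The first key step is to show $\Pb(\ker T) \cap \partiali\Cc \subset \pi^{-1}([y])$, where $\pi: \partiali\Cc \to \eqv{\partiali\Cc}$ is the quotient map. For any $z \in \Pb(\ker T) \cap \partiali\Cc$, the projective segment $[y,z] \subset \overline{\Omega}$ lies entirely in the projective subspace $\Pb(\ker T)$, which is disjoint from $\Omega$; hence $[y,z] \subset \partial\Omega$, and Condition (1) yields $y \sim z$. The second key step is to show that for every $z \in \partiali\Cc \setminus \Pb(\ker T)$, the limit $T(z) = \lim g_n(z)$ (Observation~\ref{obs:limits_of_maps}) satisfies $T(z) \sim x$. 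A supporting hyperplane $H$ of $\Omega$ at $x$ contains every open segment in $\overline{\Omega}$ through $x$, hence contains $F_\Omega(x)$ and therefore $\Span F_\Omega(x)$; in particular $\Span F_\Omega(x) \cap \Omega = \emptyset$. Since $T(z) \in \partiali\Cc$ (as $\partiali\Cc$ is preserved by each $g_n$ and is closed) and $T(z) \in \Span F_\Omega(x)$, the segment $[T(z),x]$ lies in $\overline{\Omega} \cap \Span F_\Omega(x) \subset \partial\Omega$, and Condition (1) gives $T(z) \sim x$.

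The final step is to upgrade these pointwise statements to locally uniform convergence in the quotient. Fix a compact $K \subset \eqv{\partiali\Cc} \setminus \{[y]\}$ and let $\widetilde K = \pi^{-1}(K)$, a compact subset of $\partiali\Cc$ which is disjoint from $\Pb(\ker T)$ by the first step. By Observation~\ref{obs:limits_of_maps}, $g_n \to T$ uniformly on $\widetilde K$, and by the second step $T(\widetilde K)$ is a compact subset of $\pi^{-1}([x])$. Given an open neighborhood $U$ of $[x]$ in the quotient (which is metrizable by Proposition~\ref{prop:metrizable}), write $\pi^{-1}(U) = \partiali\Cc \cap V$ for some open $V \subset \Pb(\Rb^d)$; an $\epsilon$-neighborhood of the compact set $T(\widetilde K)$ lies in $V$, and uniform convergence places $g_n(\widetilde K) \subset V$ for all large $n$. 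Since $g_n(\widetilde K) \subset \partiali\Cc$, this gives $g_n(\widetilde K) \subset \pi^{-1}(U)$, hence $g_n(K) \subset U$, which is the required convergence.

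The main obstacle is the first key step: Condition (1) must be strong enough to absorb the (potentially large) intersection $\Pb(\ker T) \cap \partiali\Cc$ into the single equivalence class $[y]$. Once this containment is secured, the remaining dynamics follow in a routine way from Proposition~\ref{prop:dynamics_of_automorphisms_1} and the uniform convergence clause of Observation~\ref{obs:limits_of_maps}.
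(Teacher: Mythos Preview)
Your proof is correct and follows essentially the same approach as the paper: pass to a subsequential limit $T\in\Pb(\End(\Rb^d))$, use Proposition~\ref{prop:dynamics_of_automorphisms_1} to see that $\Pb(\ker T)\cap\partiali\Cc\subset[y]$ via Condition~(1) and that $T(\partiali\Cc\setminus\Pb(\ker T))\subset\partiali\Cc\cap\overline{F_\Omega(x)}\subset[x]$, then conclude locally uniform convergence to $[x]$ on $(\eqv{\partiali\Cc})\setminus\{[y]\}$. The paper's write-up is terser (it records the containment $\partiali\Cc\cap\overline{F_\Omega(x')}\subset[x']$ as a preliminary observation and omits the final paragraph on passing to the quotient), but the content is the same.
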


To state the final result, we need a definition. 

\begin{definition} A point $x \in \partial \Omega$ is a \emph{uniformly conical limit point of $\Gamma$ acting on $\Omega$} if for any $p_0 \in \Omega$ the image of $[p_0,x)$ in $\Gamma \backslash \Omega$ is relatively compact. 
\end{definition} 

\begin{proposition}\label{prop:conical_limit_pts} If Conditions (1) and (2) hold, $x \in \partiali\Cc$ is a uniformly conical limit point of $\Gamma$ acting on $\Omega$, and $[x] \subset F_\Omega(x)$, then $[x] \in \eqv{\partiali\Cc}$ is a conical limit point of $\Gamma$ (in the convergence group sense, see Section \ref{sec:yaman_creiteria}). 
\end{proposition}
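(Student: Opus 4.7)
The plan is to verify that $[x] \in \eqv{\partiali\Cc}$ is a conical limit point in the convergence-group sense by exhibiting a sequence $\{\gamma_n\} \subset \Gamma$ whose action on the quotient collapses $\eqv{\partiali\Cc}\setminus \{[x]\}$ to a point $[a^+]$ while sending $[x]$ to a distinct point $[w]$. Fix $p_0 \in \Omega$ and choose $p_n \in [p_0, x)$ with $\dist_\Omega(p_0, p_n) = n$. By uniform conicality, after passing to a subsequence there exist $\gamma_n \in \Gamma$ with $\gamma_n(p_n) \to q \in \Omega$. After further subsequences, let $a^+ := \lim \gamma_n(p_0)$, $w := \lim \gamma_n(x)$, $S := \lim \gamma_n$, and $T := \lim \gamma_n^{-1}$ (with $a^+, w \in \partial \Omega$, since $\dist_\Omega(p_0, \gamma_n p_0) \to \infty$). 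Since $\dist_\Omega(\gamma_n^{-1}(p_0), p_n) = \dist_\Omega(p_0, \gamma_n(p_n))$ stays bounded, also $\gamma_n^{-1}(p_0) \to x$. Propositions~\ref{prop:dynamics_of_automorphisms_1}--\ref{prop:dynamics_of_automorphisms_2} then give $\mathrm{image}(S) \subset \Span F_\Omega(a^+)$, $x \in \Pb(\ker S)$, $T(\Omega) = F_\Omega(x)$, and $a^+ \in \Pb(\ker T)$. Passing to the limit in the chords $[\gamma_n(p_0), \gamma_n(x)]$ through $\gamma_n(p_n)$ yields $q \in (a^+, w) \cap \Omega$; in particular $a^+ \neq w$ and $(a^+, w) \subset \Omega$.

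For any $y \in \partiali\Cc$ with $y \not\sim x$, I will show $[\gamma_n(y)] \to [a^+]$. First, $y \notin \Pb(\ker S)$: otherwise $[x,y] \subset \Pb(\ker S) \cap \overline{\Omega} \subset \partial\Omega$ (by convexity of $\Pb(\ker S)$ and $\Pb(\ker S) \cap \Omega = \emptyset$), and Condition~(1) would force $y \sim x$. Hence $\gamma_n(y) \to S(y)$, which lies in $\overline{F_\Omega(a^+)} \cap \partiali\Cc$ by closedness. Since $[S(y), a^+] \subset \overline{F_\Omega(a^+)} \subset \partial\Omega$ and both endpoints lie in $\partiali\Cc$, Condition~(1) gives $S(y) \sim a^+$.

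The main obstacle is proving $[w] \neq [a^+]$. Suppose for contradiction $w \sim a^+$, so $q \in (a^+, w) \cap \Omega \subset \Cc_{a^+}$. By $\Gamma$-equivariance, $\gamma_n^{-1}(q) \in \Cc_{u_n}$, where $u_n := \gamma_n^{-1}(a^+)$, and $\dist_\Omega(p_n, \gamma_n^{-1}(q)) \to 0$. I split on the equivalence classes of $\{u_n\}$. If after subsequence $u_n \sim u_1$ for every $n$, then the fixed unbounded convex set $\Cc_{u_1}$ contains the sequence $\gamma_n^{-1}(q) \to x$, forcing $x \in \overline{\Cc_{u_1}} \cap \partial\Omega \subset \CH_\Omega([u_1])$. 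A minimal Carath\'eodory representation $x = \sum t_i b_i$ with $b_i \in [u_1]$ (a closed set, as $\sim$ is closed), together with the standard fact that if $x$ lies in $\partial\Omega$ and the relative interior of a non-trivial Carath\'eodory simplex then at least one edge of this simplex must lie in $\partial\Omega$, combined with Condition~(1) applied along that edge, yields $x \sim b_i \sim u_1$. Hence $\Cc_x = \Cc_{u_1} \neq \emptyset$, contradicting the fact that $[x] \subset F_\Omega(x) \subset \partial\Omega$ forces $\Cc_x \subset \overline{F_\Omega(x)} \cap \Omega = \emptyset$.

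Otherwise, after subsequence the $u_n$ are pairwise non-equivalent, and Condition~(2) provides constants $r, D > 0$ with $\diam_\Omega(\Nc_\Omega(\Cc_{u_n}, r) \cap \Nc_\Omega(\Cc_{u_m}, r)) < D$ whenever $n \neq m$. I will contradict this by producing points in the intersection at arbitrarily large Hilbert distance. Since $w \sim a^+$ implies $\gamma_n^{-1}(w) \sim u_n$, the chord $(u_n, \gamma_n^{-1}(w))$ lies entirely in $\Cc_{u_n}$; parametrize it as $\phi_n(t) := \gamma_n^{-1}(r_t)$, where $r_t \in (a^+, w) \cap \Omega$ is the unit-speed Hilbert parametrization with $r_0 = q$ and positive $t$ toward $w$. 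The key estimate is
\[
\dist_\Omega(\phi_n(t), p_{n+t}) \;=\; \dist_\Omega(r_t,\, \gamma_n(p_{n+t})) \;\longrightarrow\; 0 \quad \text{as } n \to \infty,
\]
uniformly on compact $t$-intervals: $\gamma_n(p_{n+t})$ is the point on the image chord $[\gamma_n(p_0), \gamma_n(x)]$ at signed Hilbert distance $t$ from $\gamma_n(p_n) \to q$, and so converges projectively (hence in Hilbert distance, since $r_t \in \Omega$) to $r_t$. Given $T > D$, for $n$ sufficiently large and $m := n+T$, the ray points $\{p_s : s \in [n,m]\}$ thus all lie in $\Nc_\Omega(\Cc_{u_n}, r)$ (using $t \in [0,T]$ anchored at $n$) and in $\Nc_\Omega(\Cc_{u_m}, r)$ (using the symmetric estimate with $t \in [-T, 0]$ anchored at $m$). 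Then $p_n$ and $p_m$ both lie in the intersection at Hilbert distance $T > D$, the desired contradiction. Hence $[w] \neq [a^+]$, and combined with the second paragraph, $\{\gamma_n\}$ witnesses that $[x]$ is a conical limit point.
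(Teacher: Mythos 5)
Your overall strategy is sound and is a genuine reorganization of the paper's argument: the paper first proves a ``ray-escape'' lemma (for every $n$ there is $q_n\in[p_n,x)$ such that no $2D$-ball around $q_n$ along the ray is captured by any $\Nc_\Omega(\Cc_y,r)$, proved by chaining Condition~(2) along the ray and a Carath\'eodory/extreme-point lemma showing $z\in F_\Omega(\partiali\Cc_y)\cap\partiali\Cc$ forces $z\sim y$), applies uniform conicality at those $q_n$, and then rules out $[a]=[b]$ by pulling the chord $(a,b)\subset\Cc_a$ back with $\gamma_n^{-1}$ to contradict the choice of $q_n$. You instead assume $[w]=[a^+]$ outright and transport the limit chord equivariantly, splitting on the equivalence classes of $u_n=\gamma_n^{-1}(a^+)$; your Case~A (a single class recurs, so $\gamma_n^{-1}(q)\in\Cc_{u_1}$ converges to $x$, whence $x\in\CH_\Omega([u_1])$ and the Carath\'eodory argument with Condition~(1) gives $x\sim u_1$, contradicting $\Cc_x=\emptyset$) is correct and is essentially the paper's first lemma in a cleaner special case, since your points lie in $\Cc_{u_1}$ exactly rather than merely in a neighborhood.

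However, Case~B as written has a genuine gap. After extracting a subsequence along which the $u_n$ are pairwise inequivalent, you take $m:=n+T$ and apply Condition~(2) to $\Cc_{u_n},\Cc_{u_m}$ --- but the inequivalence you extracted holds only along the (possibly very sparse) subsequence indices, and there is no reason that $u_n\not\sim u_{n+T}$ for the specific consecutive-at-distance-$T$ pairs your geometric estimate requires; if instead you relabel to the subsequence, the anchored estimates cover stretches $[n_j,n_j+T]$ and $[n_{j+1}-T,n_{j+1}]$ that need not overlap at all (classes can occur in long blocks). The repair is short: if every class contains only finitely many $u_n$, then for each fixed $T$ one cannot have $u_n\sim u_{n+T}$ for all large $n$ (the arithmetic progression $u_N, u_{N+T}, u_{N+2T},\dots$ would put infinitely many $u_n$ in one class), so there exist arbitrarily large $n$ with $u_n\not\sim u_{n+T}$, and at those $n$ your two anchored estimates both cover $\{p_s: s\in[n,n+T]\}$, yielding the contradiction; alternatively, run the overlap argument with consecutive indices and $T>D+1$ to force $u_n\sim u_{n+1}$ for all large $n$, collapsing Case~B into Case~A. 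Two minor slips to correct as well: $\gamma_n^{-1}(p_0)$ need not converge to $x$ itself --- by Proposition~\ref{prop:asymp_sequences} it converges (after a subsequence) only to some $c\in F_\Omega(x)$, which suffices since Condition~(1) makes same-face points equivalent, so the repelling class is still $[x]$; and in the Carath\'eodory step, $[x,b_i]$ is not an edge of the simplex, but the correct statement is stronger and easier: if a relative interior point of ${\rm ConvHull}(b_1,\dots,b_k)$ lies in $\partial\Omega$, the whole simplex lies in $\partial\Omega$, so $[x,b_i]\subset\partial\Omega$ and Condition~(1) applies directly.
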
 

\subsection{Proof of Proposition~\ref{prop:metrizable}} 
 The proof is an exercise in point set topology. We include it for the convenience of the reader. We also note that similar arguments appear in~\cite[Proposition 10.5]{Choi_book} and~\cite[Proposition 8.10]{W2020}.  

Since $\partiali\Cc$ is compact and $R$ is closed, $\eqv{\partiali\Cc}$ is compact and Hausdorff. Then by the Uryshon metrization theorem it is enough to show that $\eqv{\partiali\Cc}$ has a countable basis. 

Since $\partiali\Cc$ is a compact metrizable space, we can choose a countable basis $\Uc:=\{U_n:n \in \Nb\}$ for $\partiali\Cc$. Without loss of generality, we can assume that $\Uc$ is closed under finite union. For $n \geq 1$, let 
$$
U_n^*:=\{ x \in \partiali\Cc :[x] \subset U_n \}.
$$
Then $\Uc^*:=\{ U_n^* : U_n^* \neq \emptyset, n \in \Nb\}$ is an open cover of $\partiali\Cc$ (since $\Uc$ is closed under finite union and the quotient map is proper). 

Next let $\pi : \partiali\Cc \rightarrow \eqv{\partiali\Cc}$ denote the quotient map.  If $n \geq 1$, define 
$$
V_n:=\pi(U_n^*).
$$ 
It is clear that $\Vc:=\{V_n: U_n^* \neq \emptyset, n \in \Nb\}$ is an open cover of $\eqv{\partiali\Cc}$. We will show that is a basis for the topology. Let $w \in W \subset \eqv{\partiali\Cc}$ where $W$ is an open set. Then
$\pi^{-1}(w)\subset \pi^{-1}(W)$. If $w' \in \pi^{-1}(w)$, there exists $U_{w'} \in \Uc$ such that $w \in U_{w'} \subset q^{-1}(W)$. Since $\pi^{-1}(w)$ is compact and $\Uc$ is closed under finite union, there exists $U \in \Uc$ such that 
$$
\pi^{-1}(w) \subset U \subset \pi^{-1}(W).
$$ 
Then $\pi^{-1}(w) \subset U^* \subset U \subset \pi^{-1}(W)$. Hence $w \in \pi(U^*) \subset W$ where  $\pi(U^*) \in \Vc$. This finishes the proof.

\subsection{Proof of Proposition~\ref{prop:convergence_grp}} We also note that similar arguments appear in~\cite[Theorem 10.3]{Choi_book} and~\cite[Proposition 8.8]{W2020}.  

Notice that Condition (1) implies that 
$$
\partiali \Cc \cap ~\overline{F_\Omega(x')} \subset [x']
$$
for all $x' \in \partiali\Cc$.

Suppose that $\{\gamma_n\}$ is a sequence of distinct elements in $\Gamma$. Fix $p_0 \in \Cc$. Passing to a subsequence we can suppose that $\gamma_n(p_0) \rightarrow x \in \partiali\Cc$, $\gamma_n^{-1}(p_0) \rightarrow y \in \partiali\Cc$, and $\gamma_n \rightarrow S$ in $\Pb(\End(\Rb^d))$.  

Then
$$
\lim_{n \rightarrow \infty} \gamma_n(z) = S(z)
$$ 
 for all $z \in \Pb(\Rb^d)- \Pb(\ker S)$ and the convergence is locally uniform. By Proposition~\ref{prop:dynamics_of_automorphisms_1}, 
$$
S\left( \overline{\Omega} - \Pb(\ker S) \right) \subset \overline{\Omega} \cap \Spanset F_\Omega(x) = \overline{F_\Omega(x)}.
$$
Then
\begin{align*}
S\left( \overline{\Cc} - \Pb(\ker S) \right) \subset \partiali \Cc \cap ~\overline{F_\Omega(x)} \subset [x].
\end{align*}
Proposition~\ref{prop:dynamics_of_automorphisms_1} also implies that $\Pb(\ker S) \cap \Omega = \emptyset$ and $y \in \Pb(\ker S)$. So if $z \in \partiali\Cc \cap \Pb(\ker S)$, then $[y,z] \subset \partial \Omega$ and then Condition (1) implies that $z \in [y]$. Hence
$$
\gamma_n|_{(\eqv{\partiali\Cc})-[y]}
$$ 
converges locally uniformly to the constant map $[x]$.

\subsection{Proof of Proposition~\ref{prop:conical_limit_pts}} We also note that similar arguments appear in~\cite[Theorem 10.3]{Choi_book} and~\cite[Proposition 8.17]{W2020}.

We start with a lemma.

\begin{lemma} If $y \in \partiali\Cc$ and $z \in F_\Omega( \partiali \Cc_y)\cap \partiali \Cc$, then $z\sim y$. 
\end{lemma}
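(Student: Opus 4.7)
The plan is to prove the stronger intermediate claim that $w \sim y$ for every $w \in \partiali\Cc_y$, and then deduce the statement for $z$ using Condition (1). If $\Cc_y = \emptyset$ there is nothing to prove, so I would assume $\Cc_y \neq \emptyset$, in which case a standard argument (segments from interior points of $\Cc_y$ to points of $[y]$) shows $\overline{\Cc_y} = \CH_\Omega([y])$, so $\partiali\Cc_y = \CH_\Omega([y]) \cap \partial\Omega$. Also note that $\Cc_y \subset \Cc$ (since $\overline{\Cc}$ is a closed convex subset of $\overline{\Omega}$ containing $[y]$), hence $\partiali\Cc_y \subset \partiali\Cc$.

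The passage from $w$ to $z$ is easy once $w \sim y$ is known. Given $z \in F_\Omega(\partiali\Cc_y) \cap \partiali\Cc$, pick $w \in \partiali\Cc_y$ with $z \in F_\Omega(w)$. If $z = w$ we are done; otherwise $z, w \in F_\Omega(w) \subset \partial\Omega$ (since $w \in \partial\Omega$), so $[z,w] \subset \partial\Omega$, and since $z, w \in \partiali\Cc$, Condition (1) yields $z \sim w \sim y$.

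The crux is showing $w \sim y$. I would fix an affine chart containing $\overline{\Omega}$ and apply Carathéodory's theorem to write $w$ as a limit of convex combinations $w_n = \sum_{i=1}^{d+1} t_{i,n}\, y_{i,n}$ with $y_{i,n} \in [y]$. Passing to subsequences, $t_{i,n} \to t_i$ and $y_{i,n} \to y_i$. Because $R$ is closed, $[y]$ is closed, so $y_i \in [y]$ and $w = \sum_{i \in I} t_i y_i$ where $I = \{i : t_i > 0\}$. The key observation is that $\overline{F_\Omega(w)}$ is an extreme subset of $\overline{\Omega}$: choosing a supporting affine functional $\ell$ of $\overline{\Omega}$ with $\ell(w) = \max_{\overline{\Omega}} \ell$, the equality $\sum t_i \ell(y_i) = \ell(w)$ together with $\ell(y_i) \leq \ell(w)$ forces $\ell(y_i) = \ell(w)$ for $i \in I$; iterating over supporting hyperplanes through $w$ pins each such $y_i$ to $\overline{F_\Omega(w)} \subset \partial\Omega$. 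Consequently $[y_i, w] \subset \overline{F_\Omega(w)} \subset \partial\Omega$, and since $y_i, w \in \partiali\Cc$, Condition (1) gives $y_i \sim w$. Because $y_i \in [y]$, we conclude $w \sim y$.

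The main technical point I would have to handle carefully is the extremality step: transferring the linear fact that positive-coefficient components of a convex combination lying in a face of $\overline{\Omega}$ themselves lie in that face. This is standard once one works with a lift to the convex cone $\tilde{\Omega} \subset \Rb^{d+1}$ and uses that the preimage of $\overline{F_\Omega(w)}$ is a face of $\overline{\tilde{\Omega}}$, but it is the one place where care is needed that the projective notion of face from Definition~\ref{defn:open_faces} agrees with the usual linear notion of extreme face.
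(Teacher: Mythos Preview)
Your proof is correct and follows essentially the same route as the paper's: pick $z' \in \partiali\Cc_y$ with $z \in F_\Omega(z')$, use Carath\'eodory to write $z'$ as a positive convex combination of points $y_i \in [y]$, observe that extremality of $\overline{F_\Omega(z')}$ forces $y_i \in \overline{F_\Omega(z')}$, and finish with Condition~(1). Two minor simplifications are available: since $R$ is closed, $[y]$ is compact and hence $\CH_\Omega([y])$ is already the ordinary convex hull, so the limit step in your Carath\'eodory argument is unnecessary; and the paper applies Condition~(1) just once directly to $[z,y_1] \subset \overline{F_\Omega(z')}$ rather than splitting into $z \sim z'$ and $z' \sim y_1$.
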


\begin{proof} 
By hypothesis, there exists some $z^\prime \in \partiali \Cc_y$ with $z \in F_\Omega(z^\prime)$.

Since $[y]$ is closed, the extreme points of $\CH_{\Omega}([y])$ are contained in $[y]$. Then there exist $y_1,\dots,y_m \in [y]$ such that $$z' \in \relint \left( \CH_{\Omega}(\{y_1,\dots,y_m\}) \right) .$$ 
Thus $y_1, \dots, y_m \in \overline{F_{\Omega}(z')}$.  Then $[z,y_1] \subset \partial \Omega$ where $z,y_1 \in \partiali \Cc$. Then  Condition (1) implies that $z \sim y_1$. Thus $z \sim y$.
 \end{proof}

Now suppose $x \in \partiali\Cc$ is a uniform conical limit point of $\Gamma$ acting on $\Omega$. Fix $p_0 \in \Cc$ and a sequence $\{p_n\}$ in $[p_0, x)$ with $p_n \rightarrow x$. 

By Condition (2), there exist $r, D > 0$ such that: if $y_1 \not\sim y_2$, then 
$$
\diam_\Omega\Big( \Nc_\Omega\left(\Cc_{y_1},r\right) \cap \Nc_\Omega\left(\Cc_{y_2},r\right) \Big)  < D.
$$

\begin{lemma} For every $n \in \Nb$, there exists $q_n \in [p_n, x)$ such that 
$$
\Bc_\Omega(q_n, 2D) \cap (p_0, x) \not\subset \Nc_\Omega\left(\Cc_{y},r\right)
$$ 
for all $y \in \partiali \Cc$. 
\end{lemma}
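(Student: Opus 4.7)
The plan is to argue by contradiction. Suppose the conclusion fails for some $n$: for every $q \in [p_n, x)$ there is $y_q \in \partiali\Cc$ with $\Bc_\Omega(q, 2D) \cap (p_0, x) \subset \Nc_\Omega(\Cc_{y_q}, r)$. Parametrize $[p_n, x)$ by unit-speed Hilbert arclength as $\rho : [0, \infty) \to \Cc$ with $\rho(0) = p_n$, and set $y_t := y_{\rho(t)}$. Since $\rho(t) \in \Nc_\Omega(\Cc_{y_t}, r)$, every $\Cc_{y_t}$ is nonempty.

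The first step is to show that, for all sufficiently large $t$, the classes $[y_t]$ coincide. For $t_1, t_2 \geq 2D$ with $|t_1 - t_2| \leq D$, the set $\Bc_\Omega(\rho(t_1), 2D) \cap \Bc_\Omega(\rho(t_2), 2D) \cap (p_0, x)$ contains the geodesic segment $\rho\bigl( (\max(t_1,t_2) - 2D,\ \min(t_1,t_2) + 2D) \bigr)$, whose Hilbert diameter is $4D - |t_1 - t_2| \geq 3D$. Consequently $\Nc_\Omega(\Cc_{y_{t_1}}, r) \cap \Nc_\Omega(\Cc_{y_{t_2}}, r)$ has diameter at least $3D \geq D$, and Condition~(2) forces $y_{t_1} \sim y_{t_2}$. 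Chaining along $[2D, \infty)$ produces a single equivalence class $[y]$ with $\Cc_{y_t} = \Cc_y$ for every $t \geq 2D$, so $\rho([2D, \infty)) \subset \Nc_\Omega(\Cc_y, r)$ and $\Cc_y \neq \emptyset$.

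Next I identify the boundary behavior. Choose $t_k \to \infty$ and pick $w_k \in \Cc_y$ with $\dist_\Omega(\rho(t_k), w_k) < r$. Since $\rho(t_k) \to x \in \partial\Omega$ while the Hilbert distances stay bounded, a subsequence of $\{w_k\}$ converges to some $w \in \overline{\Cc_y} \cap \partial\Omega = \partiali \Cc_y$, and Proposition~\ref{prop:asymp_sequences} gives $F_\Omega(x) = F_\Omega(w)$; hence $x \in F_\Omega(\partiali \Cc_y) \cap \partiali \Cc$. The preceding lemma of this subsection then yields $x \sim y$. But $[y] = [x] \subset F_\Omega(x) \subset \partial\Omega$, so the convex hull $\CH_\Omega([y])$ lies in $\overline{F_\Omega(x)} \subset \partial\Omega$, forcing $\Cc_y = \CH_\Omega([y]) \cap \Omega = \emptyset$, contradicting $\Cc_y \neq \emptyset$.

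The main technical point is the diameter computation in the first step: one must verify that the geodesic segment common to the two neighborhoods really has Hilbert diameter exceeding the $D$ threshold of Condition~(2), so that nonequivalence is ruled out. Note that the uniform conical hypothesis on $x$ is not used in the lemma itself; it will enter later in the proof of Proposition~\ref{prop:conical_limit_pts}, once the sequence $\{q_n\}$ is in hand, to produce the group elements realizing convergence-group conicality.
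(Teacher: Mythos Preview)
Your argument is correct and follows essentially the same route as the paper's proof: assume the conclusion fails for some $n$, use Condition~(2) together with an overlapping-balls/chaining argument along $[p_n,x)$ to show that all the sets $\Cc_{y_t}$ coincide with a single nonempty $\Cc_y$, deduce via Proposition~\ref{prop:asymp_sequences} that $x \in F_\Omega(\partiali\Cc_y)$, invoke the preceding lemma to get $x\sim y$, and contradict $[x]\subset F_\Omega(x)$. Your version is in fact slightly more explicit about the diameter estimate (you obtain $\geq 3D$ on the overlap) and about why $[x]\subset F_\Omega(x)$ forces $\Cc_x=\emptyset$, but the structure is identical.
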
 

\begin{proof} 
Fix $n \in \Nb$ and suppose not. Then for every $q \in [p_n,x)$ there exists $y(q) \in \partiali\Cc$ such that 
$$
\Bc_\Omega(q, 2D) \cap (p_0, x) \subset \Nc_\Omega\left(\Cc_{y(q)},r\right)
$$ 
 (note that this implies that $\Cc_{y(q)}$ is a non-empty set).  We claim that $\Cc_{y(q)}$ does not depend on $q$. To show this it is enough to fix $q', q'' \in [p_n,x)$ with $\dist_\Omega(q', q'') \leq D$ and show that $\Cc_{y(q')} = \Cc_{y(q'')}$. In this case, $\Bc_\Omega(q', D) \subset  \Bc_\Omega(q'', 2D)$ and hence 
$$
\Bc_\Omega(q',D) \cap (p_0,x) \subset \Nc_\Omega\left(\Cc_{y(q')},r\right) \cap \Nc_\Omega\left(\Cc_{y(q'')},r\right).
$$
Then by our choice of $D$, we have $\Cc_{y(q')}=\Cc_{y(q'')}$. Thus $\Cc_{y(q)}$ does not depend on $q \in [p_n,x)$. Thus 
$$
(p_n,x) \subset  \Nc_\Omega\left(\Cc_{y(p_n)},r\right)
$$
Then Proposition~\ref{prop:asymp_sequences} implies that $x \in F_\Omega( \partiali\Cc_{y(p_n)})$. So by the previous lemma $x \sim y(p_n)$. Then $\Cc_x = \Cc_{y(p_n)}$ is non-empty, which contradicts the assumption that $[x] \subset F_\Omega(x)$. 
\end{proof} 

Now we finish the proof of Proposition \ref{prop:conical_limit_pts}.  Fix a sequence $\{q_n\}$ as in the above lemma and a sequence $\{\gamma_n\}$ in $\Gamma$ such that $\{\gamma_n(q_n)\}$ is relatively compact in $\Omega$. Passing to a subsequence we can suppose that $\gamma_n(p_0) \rightarrow b$, $\gamma_n^{-1}(p_0) \rightarrow c$, and $\gamma_n(x) \rightarrow a$. By the proof of Proposition~\ref{prop:convergence_grp}
$$
\gamma_n|_{(\eqv{\partiali\Cc})-[c]}
$$ 
converges locally uniformly to the constant map $[b]$. To show that $[x]$ is a conical limit point in the convergence group sense, we need to prove that $[x] = [c]$ and $[a] \neq [b]$. 

Suppose for a contradiction that $[a] = [b]$. Since $\{\gamma_n(q_n)\}$ is relatively compact in $\Omega$, we have $(a,b) \subset \Omega$. Hence $(a,b) \subset \Cc_a$ and so for $n$ large 
$$
\Bc_\Omega(\gamma_n(q_n), 2D) \cap (\gamma_n(p_0), \gamma_n(x)) \subset \Nc_\Omega\left(\Cc_{a},r\right).
$$ 
This implies that 
$$
\Bc_\Omega(q_n, 2D) \cap (p_0, x) \subset \Nc_\Omega\left(\Cc_{\gamma_n^{-1}(a)},r\right)
$$ 
which is a contradiction. So $[a] \neq [b]$. 

Since $\gamma_n([x]) \rightarrow [a]$ and $[a]\neq [b]$, we must have $[x]=[c]$.

\section{Proof of Theorem \ref{thm:main}}
\label{sec:proof_main}

In this section we prove Theorem~\ref{thm:main}. Suppose that $(\Omega, \Cc, \Gamma)$ is a naive convex co-compact triple. 

\medskip

\noindent \textbf{(1) $\Rightarrow$ (2): } Suppose that $\Gamma$ is relatively hyperbolic with respect $\Pc:=\{P_1, \ldots, P_m\}$ and 
$$
X_j := \CH_\Omega\Big( \limset(P_j) \cap \partiali \Cc\Big)\cap \Omega \quad j=1,\dots,m.
$$
Then Proposition \ref{prop:rh_implies_isolation} implies that $(\Cc, \dist_\Omega)$ is relatively hyperbolic with respect to 
$$
\Xc=\Gamma \cdot \{X_1,\dots, X_m\}.
$$ 
Then Proposition~\ref{prop:immediate_consequences_of_DS05} implies that $\Xc$ is a peripheral family of $(\Omega, \Cc, \Gamma)$.

\medskip

\noindent \textbf{(2) $\Rightarrow$ (1): } Suppose that $\Xc$ is a peripheral family of $(\Omega, \Cc, \Gamma)$ and $\Pc$ is a set of representatives of the $\Gamma$-conjugacy classes in $\{ \Stab_{\Gamma}(X) : X \in \Xc\}$.  Let 
$$ 
\qcc: = \partiali\Cc /{\sim}
$$ 
be as in Definition~\ref{defn:boundary quotient}. We claim that the action of $\Gamma$ on $\qcc$ satisfies Theorem~\ref{thm:yaman_criteria}.

\begin{lemma} The set $R:=\{(x,y) \in \partiali \Cc \times \partiali\Cc : x \sim y\}$ is closed. \end{lemma}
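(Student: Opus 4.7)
The plan is to verify that $R$ is sequentially closed. Take a sequence $(x_n, y_n) \to (x, y)$ with $x_n \sim y_n$; after passing to a subsequence I may assume that either (A) $F_\Omega(x_n) = F_\Omega(y_n)$ for all $n$, or (B) $x_n, y_n \in F_\Omega(\partiali X_n)$ for some $X_n \in \Xc$.

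In case (A), the open face $F_\Omega(x_n) = F_\Omega(y_n)$ is convex and contained in $\partial\Omega$, so $[x_n, y_n] \subset \overline{F_\Omega(x_n)} \cap \overline{\Cc} \subset \partiali\Cc$. Since $\partiali\Cc$ is closed in $\overline{\Omega}$, passing to Hausdorff limits of segments gives $[x, y] \subset \partiali\Cc$, and then the trichotomy in Theorem~\ref{thm:boundary_faces}(7) yields $x \sim y$ directly.

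In case (B) I would split once more. If some $X \in \Xc$ equals $X_n$ for infinitely many $n$, then $x_n, y_n \in F_\Omega(\partiali X) \cap \partiali\Cc$, which is closed by the first paragraph of the proof of Theorem~\ref{thm:boundary_faces}(8); so $x, y \in F_\Omega(\partiali X)$ and $x \sim y$. Otherwise, after subsequencing, the $X_n$ are pairwise distinct. If $x = y$, then trivially $F_\Omega(x) = F_\Omega(y)$. If $x \neq y$ but $[x, y] \subset \partial\Omega$, then $[x, y] \subset \partiali\Cc$ and Theorem~\ref{thm:boundary_faces}(7) again applies.

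The main obstacle is the remaining configuration: the $X_n$ are pairwise distinct, $x \neq y$, and $(x, y) \cap \Omega \neq \emptyset$. I would resolve it by picking $m \in (x, y) \cap \Omega$, writing $m = t x + (1-t) y$ with $t \in (0,1)$, and setting $m_n := t x_n + (1-t) y_n$. Then $m_n \to m$, so $m_n \in \Omega$ for large $n$, and by construction $m_n \in \CH_\Omega(F_\Omega(\partiali X_n) \cap \partiali\Cc) \cap \Omega$. The final estimate in the proof of Theorem~\ref{thm:boundary_faces}(8) shows that this convex hull is contained in $\Nc_\Omega(X_n, R)$ for a constant $R$ independent of $X_n$; so each $X_n$ meets a fixed compact neighborhood of $m$ in $\Omega$, contradicting the local finiteness of $\Xc$ proved in Lemma~\ref{lem:finitely_many_simplices_in_compact_set}.
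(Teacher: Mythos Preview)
Your argument is correct, and it takes a genuinely different decomposition from the paper's.

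The paper splits according to the \emph{limit}: either $(x,y)\subset\Omega$ or $[x,y]\subset\partial\Omega$. In the second case it invokes Theorem~\ref{thm:boundary_faces}(7) just as you do. In the first case it observes that $F_\Omega(x_n)\neq F_\Omega(y_n)$ for large $n$, so there must be $X_n\in\Xc$ with $x_n,y_n\in F_\Omega(\partiali X_n)$; it then uses Theorem~\ref{thm:boundary_faces}(5) to replace $x_n,y_n$ by nearby points $x_n',y_n'\in\partiali X_n$, applies Proposition~\ref{prop:Hausdorff_distance_between_lines} to see that the segments $(x_n',y_n')\subset X_n$ track $(x,y)$ uniformly, and concludes from strong isolation that the $X_n$ eventually coincide, giving $x,y\in F_\Omega(\partiali X)$.

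You instead split according to the \emph{sequence}: same face versus $x_n,y_n\in F_\Omega(\partiali X_n)$. Your hard sub-case (distinct $X_n$ with $(x,y)\cap\Omega\neq\emptyset$) is handled by noting that the affine interpolants $m_n$ land in $\CH_\Omega(F_\Omega(\partiali X_n)\cap\partiali\Cc)\cap\Omega$, which by the estimate in the proof of Theorem~\ref{thm:boundary_faces}(8) lies in a uniform $\Nc_\Omega(X_n,R)$; this forces infinitely many distinct $X_n$ to meet a fixed compact set, contradicting Lemma~\ref{lem:finitely_many_simplices_in_compact_set}. This is a clean packaging: rather than re-deriving the segment estimate and invoking strong isolation directly, you cite two consequences already on record. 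The paper's route is slightly more self-contained (it does not appeal to the internal estimate in the proof of part~(8)), while yours is shorter and more modular.
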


\begin{proof} 
Suppose that $\{ (x_n,y_n)\}$ is a sequence in $R$ converging to $(x,y)$ in $\partiali \Cc \times \partiali\Cc$. 

\medskip

\noindent \emph{Case 1:} Assume $(x,y) \subset \Omega$. Then $(x_n, y_n) \subset \Omega$ for $n$ sufficiently large and for such $n$ there exists $X_n \in \Xc$ with $x_n, y_n \in F_\Omega(\partiali X_n)$. By Theorem \ref{thm:boundary_faces} part (5) there is some $R > 0$ such that: for every $n$  there exist $x_n^\prime, y_n^\prime \in \partiali X_n$ where
$$
\dist_{F_\Omega(x_n)}(x_n, x_n^\prime) \leq R \quad \text{and} \quad \dist_{F_\Omega(y_n)}(y_n, y_n^\prime) \leq R.
$$
Then by Proposition~\ref{prop:Hausdorff_distance_between_lines} 
$$
\dist_{\Omega}^{\Haus}\Big( (x_n,y_n), (x_n^\prime,y_n^\prime) \Big) \leq R.
$$
Since $(x_n, y_n) \rightarrow (x,y)$, this implies that 
$$
\lim_{n,m \rightarrow \infty} \diam_\Omega \left( \Nc_\Omega( X_n, R+1) \cap \Nc_\Omega( X_m, R+1)\right) = \infty.
$$
Since $\Xc$ is strongly isolated, $X:=X_n = X_m$ for $n,m$ sufficiently large. Then 
$$
(x,y) \subset \Nc_\Omega(X, R)
$$
and so $x,y \in F_\Omega(\partiali X)$ by Proposition~\ref{prop:asymp_sequences}. Thus $x \sim y$.

\medskip

\noindent \emph{Case 2:} Assume $[x,y] \subset \partial\Omega$. Then $x \sim y$ by Theorem \ref{thm:boundary_faces} part (7).
\end{proof} 

Let $B = \left\{ [F_\Omega(\partiali X) \cap \partiali \Cc] : X \in \Xc\right\} \subset \qcc$. 

\begin{lemma} \ 
\begin{enumerate} 
\item $\qcc$ is a compact Hausdorff metrizable space. 
\item $\Gamma$ acts as a convergence group on $\qcc$. 
\item If $z \in \qcc - B$, then $z$ is a conical limit point of $\Gamma$.
\end{enumerate}
\end{lemma}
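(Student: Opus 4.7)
The plan is to derive all three statements from the general machinery of Section~\ref{sec:bdry_quotient}, namely Propositions~\ref{prop:metrizable}, \ref{prop:convergence_grp}, and \ref{prop:conical_limit_pts}, applied to the $\Gamma$-invariant equivalence relation $\sim$ defining $\qcc$. The graph of $\sim$ is already known to be closed by the previous lemma, so the hypotheses to check are $\#\qcc\geq 3$ (easy from cocompactness of the $\Gamma$-action on the unbounded set $\Cc$, with the degenerate case handled separately), Condition~(1), Condition~(2), uniform conicality, and the inclusion $[x]\subset F_\Omega(x)$ for representatives of points in $\qcc - B$.

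Part~(1) is a direct application of Proposition~\ref{prop:metrizable}. For part~(2), I would verify Condition~(1): if $x,y \in \partiali\Cc$ and $[x,y]\subset\partial\Omega$, then convexity of $\Cc$ forces $[x,y]\subset\overline{\Cc}\cap\partial\Omega=\partiali\Cc$, and then Theorem~\ref{thm:boundary_faces} part~(7) gives either $F_\Omega(x)=F_\Omega(y)$ or $x,y \in F_\Omega(\partiali X)$ for some $X\in\Xc$; either case says $x\sim y$. Proposition~\ref{prop:convergence_grp} then yields the convergence group action.

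For part~(3) I would invoke Proposition~\ref{prop:conical_limit_pts}. Condition~(1) was already verified. Uniform conicality is automatic because $[p_0,x)\subset\Cc$ (by convexity) and $\Gamma\backslash\Cc$ is compact, so the image of $[p_0,x)$ in $\Gamma\backslash\Omega$ is relatively compact. The inclusion $[x]\subset F_\Omega(x)$ when $[x]\notin B$ is built into the definition of $\sim$: in that case no representative lies in any $F_\Omega(\partiali X)$, so $[x]=\{y\in\partiali\Cc:F_\Omega(y)=F_\Omega(x)\}\subset F_\Omega(x)$. The substantive step is Condition~(2): I would argue that if $[x]\notin B$ then $[x]\subset F_\Omega(x)\subset\partial\Omega$, whence $\Cc_x=\CH_\Omega([x])\cap\Omega=\emptyset$ (so the diameter bound is trivial); and if $[x]\in B$ then $[x]=F_\Omega(\partiali X)\cap\partiali\Cc$ for a unique $X\in\Xc$ (uniqueness by Theorem~\ref{thm:boundary_faces} part~(3)), and the proof of Theorem~\ref{thm:boundary_faces} part~(8) exhibits a constant $\rho$ (independent of $X$) such that $\Cc_x\subset\Nc_\Omega(X,\rho)$. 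Hence if $x\not\sim y$ and both $\Cc_x,\Cc_y$ are non-empty, they lie in uniform neighborhoods of distinct elements of $\Xc$, and strong isolation of $\Xc$ supplies the required diameter bound.

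The main thing to be careful about is the Condition~(2) verification: one must check that the neighborhood constant $\rho$ in Theorem~\ref{thm:boundary_faces} part~(8) really is uniform across $\Xc$ (which it is, since it depends only on the strong-isolation data and on $D_2$), and then appropriately combine this uniformity with the strong-isolation constant $D_1(r)$. Everything else is a direct translation from the general theory of Section~\ref{sec:bdry_quotient} to the setup at hand.
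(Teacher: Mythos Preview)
Your proposal is correct and follows essentially the same approach as the paper: reduce everything to Propositions~\ref{prop:metrizable}, \ref{prop:convergence_grp}, and \ref{prop:conical_limit_pts}, verifying Condition~(1) via Theorem~\ref{thm:boundary_faces} part~(7) and Condition~(2) via the observation that $\Cc_x=\emptyset$ for $[x]\notin B$ while $\Cc_b$ lies in a uniform neighborhood of the corresponding $X\in\Xc$ for $b\in B$. The one cosmetic difference is in the uniformity argument for Condition~(2): the paper invokes Theorem~\ref{thm:boundary_faces} parts~(1), (2), and~(8) together (finitely many $\Gamma$-orbits in $\Xc$, plus co-compactness of $\Stab_\Gamma(X)$ on both $X$ and $\Cc_b$), whereas you extract the explicit uniform bound $\max\{L,2D_2+2\}$ directly from the proof of part~(8); both routes work.
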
 

\begin{proof} We use the results of Section~\ref{sec:bdry_quotient}. Part (1) follows from the previous lemma and Proposition \ref{prop:metrizable}.

 Notice that  Theorem \ref{thm:boundary_faces} part (7) implies that the quotient $\qcc$ satisfies Condition (1) from Section~\ref{sec:bdry_quotient}. We claim that the quotient $\qcc$ satisfies Condition (2) from Section~\ref{sec:bdry_quotient}. If $z \in \qcc - B$, then $\Cc_z = \emptyset$.  If $b = [F_\Omega(\partiali X) \cap \partiali \Cc] \in B$, then by Theorem \ref{thm:boundary_faces} parts (2) and (8), $\Stab_\Gamma(X)$ acts co-compactly on both $\Cc_b$ and $X$. So $\Cc_b$ is contained in a bounded neighborhood of $X$. By Theorem \ref{thm:boundary_faces} part (1), we can choose this bound to be independent of $b$. The since $\Xc$ is strongly isolated, $\qcc$ satisfies Condition (2). 
 
Then $\Gamma$ acts as a convergence group on $\qcc$ by Proposition~\ref{prop:convergence_grp}. If $z \in \qcc - B$, then $z = [F_\Omega(x) \cap \partiali \Cc]$ for some $x \in \partiali\Cc$ and so Proposition~\ref{prop:conical_limit_pts} implies that $z$ is a conical limit point of $\Gamma$. This proves parts (2) and (3).
\end{proof}

\begin{lemma} \ \begin{enumerate}
\item $\Gamma$ has finitely many orbits in $B$.
\item If $b=[F_\Omega(\partiali X) \cap \partiali \Cc] \in B$, then $\Stab_{\Gamma}(b) = \Stab_{\Gamma}(X)$. In particular, $\Stab_\Gamma(b)$ is finitely generated.
\item If $b \in B$, then $b$ is a bounded parabolic fixed point. 
\end{enumerate}
 \end{lemma}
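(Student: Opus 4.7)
\emph{Proof plan.}

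Parts (1) and (2) should be bookkeeping. For (1) the map $\Xc \to B$, $X \mapsto [F_\Omega(\partiali X) \cap \partiali \Cc]$, is $\Gamma$-equivariant and surjective, so $B/\Gamma$ is a quotient of the finite set $\Xc/\Gamma$ from Theorem \ref{thm:boundary_faces}(1). For (2) the inclusion $\Stab_\Gamma(X) \subseteq \Stab_\Gamma(b)$ is immediate. Conversely, given $g \in \Stab_\Gamma(b)$, the sets $F_\Omega(\partiali X) \cap \partiali \Cc$ and $F_\Omega(\partiali gX) \cap \partiali \Cc$ represent the same equivalence class $b$; unwinding the two clauses in the definition of $\sim$ and invoking Theorem \ref{thm:boundary_faces}(3) (distinct elements of $\Xc$ have disjoint $F_\Omega(\partiali \cdot )$) in each alternative forces $gX = X$. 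Finite generation of $\Stab_\Gamma(X)$ is then Theorem \ref{thm:boundary_faces}(2).

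For part (3) I would verify the three defining conditions of a bounded parabolic fixed point. Infinitude of $\Stab_\Gamma(b) = \Stab_\Gamma(X)$ is immediate from its cocompact action on the unbounded set $X$ (Theorem \ref{thm:boundary_faces}(2)). For cocompactness of $\Stab_\Gamma(b)$ on $\qcc - \{b\}$, I would apply Theorem \ref{thm:co_compact} and Remark \ref{rem:boundary-action-and-parabolic-point} (whose hypotheses are precisely Theorem \ref{thm:boundary_faces}(6),(8)) to obtain a cocompact action of $\Stab_\Gamma(X)$ on $\partiali \Cc - F_\Omega(\partiali X)$, and then push this forward along the continuous $\Stab_\Gamma(X)$-equivariant quotient map $\pi : \partiali \Cc \to \qcc$, which sends $\partiali \Cc - F_\Omega(\partiali X)$ onto $\qcc - \{b\}$.

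The heart of the matter, and the step I expect to be the main obstacle, is showing that every infinite order $g \in \Stab_\Gamma(X)$ fixes only $b$ in $\qcc$. My plan is to specialize the argument in the proof of Proposition \ref{prop:convergence_grp} to the sequence $\{g^{n_k}\}$ with base point $p_0 \in X$. After passing to a subsequence so that $g^{n_k}(p_0) \to x^+$, $g^{-n_k}(p_0) \to y^+$, and $g^{n_k} \to S$ in $\Pb(\End(\Rb^d))$, the $g$-invariance of $X$ places $x^+, y^+ \in \partiali X$, so $[x^+] = [y^+] = b$ in $\qcc$. Then the inclusion $S(\overline{\Omega} - \Pb(\ker S)) \subset \overline{F_\Omega(x^+)}$ from Proposition \ref{prop:dynamics_of_automorphisms_1}, together with the Condition (1) check (supplied here by Theorem \ref{thm:boundary_faces}(7) applied to the segment $[x^+, z] \subset \partial \Omega \cap \partiali \Cc$ for any $z \in \partiali \Cc \cap \overline{F_\Omega(x^+)}$), gives that $g^{n_k}|_{\qcc - \{b\}}$ converges locally uniformly to the constant map $b$. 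Consequently any hypothetical fixed point $c \neq b$ of $g$ in $\qcc$ would satisfy $c = g^{n_k}(c) \to b$, a contradiction. The subtle point is merely that one must verify Condition (1) in the current setting rather than re-derive the dynamics: this is already encoded in Theorem \ref{thm:boundary_faces}(7), so the argument reduces to a direct reapplication of Proposition \ref{prop:convergence_grp} to the stabilizer sequence.
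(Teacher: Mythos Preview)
Your proposal is correct. For parts (1), (2), and the cocompactness verification in (3), you take essentially the same route as the paper: Theorem~\ref{thm:boundary_faces} parts (1)--(3) for the bookkeeping, and Theorem~\ref{thm:co_compact} (via Remark~\ref{rem:boundary-action-and-parabolic-point}, whose hypotheses are supplied by Theorem~\ref{thm:boundary_faces}(2),(6)) for the cocompact action of $\Stab_\Gamma(X)$ on $\partiali\Cc - F_\Omega(\partiali X)$, pushed down to $\qcc - \{b\}$.

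The one notable difference is that the paper's proof of (3) verifies \emph{only} the cocompactness of $\Stab_\Gamma(b)$ on $\qcc - \{b\}$ and says nothing about infinitude or the non-loxodromic property. What you flag as ``the heart of the matter, and the step I expect to be the main obstacle'' is therefore entirely absent from the paper's argument; the paper is presumably leaning on the standard convergence-group fact (Tukia) that in a perfect compactum a point whose stabilizer acts cocompactly on its complement is automatically a bounded parabolic point. Your direct argument---specializing the dynamics of Proposition~\ref{prop:convergence_grp} to $\{g^{n_k}\}$ with $p_0 \in X$, so that the attracting and repelling classes both collapse to $b$---is correct and makes the verification self-contained. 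So your proof is, if anything, more complete than the paper's.
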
 
 
 \begin{proof} (1): This follows immediately from Theorem \ref{thm:boundary_faces} part (1). 
 
 (2): It is clear that $\Stab_{\Gamma}(b) \supset \Stab_{\Gamma}(X)$. For the other inclusion, if $\gamma \in \Stab_{\Gamma}(b)$, then 
 $$
 F_\Omega(  \partiali (\gamma X)) = \gamma F_\Omega(\partiali X) = F_\Omega(\partiali X).
 $$
 So $\gamma \in \Stab_{\Gamma}(X)$ by Theorem \ref{thm:boundary_faces} part (3). Thus $\Stab_{\Gamma}(b) = \Stab_{\Gamma}(X)$. The ``in particular'' part then follows from Theorem \ref{thm:boundary_faces} part (2). 
 
 (3): Fix $b = [F_\Omega(\partiali X) \cap \partiali \Cc] \in B$. By Theorem \ref{thm:boundary_faces} part (2), $\Stab_{\Gamma}(X)$ acts co-compactly on $X$. And by Theorem~\ref{thm:boundary_faces} part (6), if $x \in \partiali X$ and $[x,y] \subset \partiali \Cc$, then $y \in F_\Omega(\partiali X)$. 

 Then Theorem~\ref{thm:co_compact} implies that $\Stab_{\Gamma}(X)$ acts co-compactly on $\partiali \Cc - F_\Omega(\partiali X)$. Thus $\Stab_{\Gamma}(b)$ acts co-compactly on $\qcc-\{b\}$.
 \end{proof} 
 
 Then  Theorem~\ref{thm:yaman_criteria} implies that $\Gamma$ is relatively hyperbolic with respect to $\Pc$ and there exists a $\Gamma$-equivariant homeomorphism $\partial(\Gamma, \Pc) \rightarrow \qcc$. 

\medskip

\noindent \textbf{The ``moreover'' parts:} Now suppose that at least one of the conditions is satisfied. Proposition \ref{prop:rh_implies_isolation} implies parts (a) and (c). Part (b) was established in the proof that (2) $\Rightarrow$ (1) (see the last paragraph). Parts (d) and (e) follow from Theorem \ref{thm:boundary_faces}.

\section{The convex co-compact case}
\label{sec:cc}

In this section we prove Theorem \ref{thm:main_cc} and Proposition~\ref{prop: peripheral family in cc case}. As mentioned in the introduction, the key result which allows our results to simplify in the convex co-compact case is the following observation, see \cite{DGF2017}.

\begin{observation}\label{obs: contains faces}  
Suppose that $\Omega \subset \Pb(\Rb^d)$ is a properly convex domain and $\Gamma \leq \Aut(\Omega)$ is a convex co-compact subgroup. 
\begin{enumerate}
\item If $x \in \partial \Omega$ and $F_\Omega(x) \cap \partiali\Cc_\Omega(\Gamma) \neq \emptyset$, then $F_\Omega(x) \subset \partiali\Cc_\Omega(\Gamma)$. 
\item $\Lc_\Omega(\Gamma)=\partiali\Cc_\Omega(\Gamma)$. In particular, $\Lc_\Omega(\Gamma)$ is closed.
\end{enumerate} 
\end{observation}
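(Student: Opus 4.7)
The two statements are closely linked, and I would prove (1) first and then deduce (2) via a short argument. The main tool throughout is the dynamics of convergent sequences in $\Aut(\Omega)$ (Propositions~\ref{prop:dynamics_of_automorphisms_1} and~\ref{prop:dynamics_of_automorphisms_2}) combined with co-compactness of the $\Gamma$-action on $\core(\Gamma)$.

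For (1), fix $y \in F_\Omega(x) \cap \partiali\core(\Gamma)$. After replacing $\Omega$ by $\Omega \cap \Pb(\Spanset\core(\Gamma))$ if necessary, I may assume $\core(\Gamma)$ has non-empty interior in $\Omega$; choose $p_0 \in \interior\core(\Gamma)$. Since $\core(\Gamma)$ is closed and convex, the Hilbert-geodesic ray $[p_0, y)$ lies in $\core(\Gamma)$. Pick $q_n \in [p_0, y)$ with $q_n \to y$ and use co-compactness to produce $h_n \in \Gamma$ with $\dist_\Omega(h_n(p_0), q_n)$ uniformly bounded; passing to a subsequence, $h_n \to T$ in $\Pb(\End(\Rb^d))$, and Proposition~\ref{prop:dynamics_of_automorphisms_2} yields $T(\Omega) = F_\Omega(y) = F_\Omega(x)$. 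Since each $h_n$ preserves $\core(\Gamma)$ and $T(\Omega) \subseteq \partial\Omega$, it follows that $T(\core(\Gamma)) \subseteq \overline{\core(\Gamma)} \cap \partial\Omega = \partiali\core(\Gamma)$. Because $\Pb(\ker T) \cap \Omega = \emptyset$ (Proposition~\ref{prop:dynamics_of_automorphisms_1}), the restriction $T|_\Omega$ is a projective-linear submersion, hence an open map, onto $F_\Omega(y)$; therefore $T(\interior\core(\Gamma))$ is a non-empty relatively open subset of $F_\Omega(y)$ contained in $\partiali\core(\Gamma)$. To upgrade this to $F_\Omega(y) \subseteq \partiali\core(\Gamma)$, I rerun the construction at each $z \in F_\Omega(y) \cap \partiali\core(\Gamma)$: the resulting limit $T_z$ satisfies $T_z(p_0) = z$ and $T_z(\Omega) = F_\Omega(z) = F_\Omega(y)$, and openness of $T_z$ at the interior point $p_0$ produces an open neighborhood of $z$ in $F_\Omega(y) \cap \partiali\core(\Gamma)$. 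Hence $F_\Omega(y) \cap \partiali\core(\Gamma)$ is both open and closed in the connected set $F_\Omega(y)$, so the two coincide.

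For (2), the inclusion $\partiali\core(\Gamma) \subseteq \Lc_\Omega(\Gamma)$ is immediate from the sequence $\{h_n(p_0)\}$ constructed above. Conversely, given $y \in \Lc_\Omega(\Gamma)$ with $g_n(q) \to y$, fix $p_0 \in \core(\Gamma)$; since $\dist_\Omega(g_n(p_0), g_n(q)) = \dist_\Omega(p_0, q)$ is bounded, after passing to a subsequence $g_n(p_0) \to y' \in \partiali\core(\Gamma)$. Proposition~\ref{prop:asymp_sequences} then forces $F_\Omega(y) = F_\Omega(y')$, and applying (1) to $y'$ gives $y \in F_\Omega(y') \subseteq \partiali\core(\Gamma)$. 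Closedness of $\Lc_\Omega(\Gamma)$ is immediate from the identification $\Lc_\Omega(\Gamma) = \overline{\core(\Gamma)} \cap \partial\Omega$. The main obstacle in the plan is the openness-and-connectedness step in (1): promoting a locally open statement in $F_\Omega(y)$ to the global inclusion $F_\Omega(y) \subseteq \partiali\core(\Gamma)$ by rerunning the dynamical construction at every point of $F_\Omega(y) \cap \partiali\core(\Gamma)$, with the reduction to the non-empty interior case being a minor but necessary technical preliminary.
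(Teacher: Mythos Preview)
Your argument for (1) takes an unnecessarily circuitous route and contains a real gap. The reduction step---replacing $\Omega$ by $\Omega' := \Omega \cap \Pb(\Spanset\core(\Gamma))$---changes the ambient domain and hence the meaning of $F_\Omega(x)$; when $\core(\Gamma)$ has empty interior, $F_{\Omega'}(y)$ can be strictly smaller than $F_\Omega(y)=F_\Omega(x)$, and nothing in what follows explains how the conclusion for $F_{\Omega'}(y)$ lifts back to the original face. Without the reduction your openness argument collapses, since it requires $\interior\core(\Gamma) \neq \emptyset$.

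More to the point, you have already done all the work once Proposition~\ref{prop:dynamics_of_automorphisms_2} gives you $T(\Omega) = F_\Omega(x)$. For any $z \in F_\Omega(x)$ there is some $p \in \Omega$ with $z = T(p) = \lim_n h_n(p)$, so $z \in \Lc_\Omega(\Gamma)$ by definition of the limit set. Thus $F_\Omega(x) \subset \Lc_\Omega(\Gamma)$ in one stroke. Since $\core(\Gamma)$ is by definition the closed convex hull of $\Lc_\Omega(\Gamma)$, one has $\Lc_\Omega(\Gamma) \subset \overline{\core(\Gamma)} \cap \partial\Omega = \partiali\core(\Gamma)$, and (1) is done. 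This is exactly the paper's argument: no interior points, no openness, no connectedness are needed.

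The same observation simplifies your (2): the inclusion $\Lc_\Omega(\Gamma) \subset \partiali\core(\Gamma)$ is immediate from the definition of $\core(\Gamma)$ and does not require invoking (1) or Proposition~\ref{prop:asymp_sequences}. Your argument for the reverse inclusion $\partiali\core(\Gamma) \subset \Lc_\Omega(\Gamma)$ is correct and matches the paper's.
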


\begin{proof}  For (1), fix $x \in \partial \Omega$ with $F_\Omega(x) \cap \partiali\Cc_\Omega(\Gamma) \neq \emptyset$. Then Proposition~\ref{prop:dynamics_of_automorphisms_2} implies that $F_\Omega(x) \subset \Lc_\Omega(\Gamma)$ and we have $\Lc_\Omega(\Gamma) \subset \partiali\Cc_\Omega(\Gamma)$ by the definition of $\Cc_\Omega(\Gamma)$. 

For (2), notice that the first line of this proof implies that $\partiali\Cc_\Omega(\Gamma) \subset \Lc_\Omega(\Gamma)$ and the reverse inclusion is by definition of $\Cc_\Omega(\Gamma)$.
\end{proof}

\subsection{Proof of Theorem~\ref{thm:main_cc}} Suppose that $\Gamma \leq \Aut(\Omega)$ is a convex co-compact group such that $\Gamma$ is relatively hyperbolic with respect to $\Pc:=\{P_1, \ldots, P_m\}$. Then by definition $(\Omega, \core(\Gamma),\Gamma)$ is a naive convex co-compact triple. Let $\Cc = \core(\Gamma)$ and 
$$
\Xc:=\Gamma \cdot \{\core(P_1),\dots, \core(P_m)\}.
$$

\begin{lemma}[Parts (a) and (c)] Each $P_j$ is a convex co-compact subgroup of $\Aut(\Omega)$ and $(\Cc,\hil)$ is relatively hyperbolic with respect to $\Xc$. 
\end{lemma}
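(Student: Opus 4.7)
The plan is to reduce everything to the ``moreover'' parts of Theorem~\ref{thm:main} by identifying the abstract peripheral sets $X_j$ from that theorem with the convex cores $\core(P_j)$. First I would note that since $\Gamma$ is convex co-compact, the triple $(\Omega, \Cc, \Gamma) = (\Omega, \core(\Gamma), \Gamma)$ is in particular a naive convex co-compact triple, so Theorem~\ref{thm:main} applies in its direction $(1) \Rightarrow (\text{moreover})$. Setting
$$
X_j := \CH_\Omega\bigl(\limset(P_j) \cap \partiali\Cc\bigr) \cap \Omega,
$$
Theorem~\ref{thm:main} gives that $(\Omega, X_j, P_j)$ is a naive convex co-compact triple and that $(\Cc, \hil)$ is relatively hyperbolic with respect to $\Gamma \cdot \{X_1, \dots, X_m\}$.

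The key identification is $X_j = \core(P_j)$. Since $P_j \leq \Gamma$, any accumulation point of a $P_j$-orbit in $\Omega$ is also an accumulation point of the corresponding $\Gamma$-orbit, so $\limset(P_j) \subset \limset(\Gamma)$. By Observation~\ref{obs: contains faces}(2), $\limset(\Gamma) = \partiali\core(\Gamma) = \partiali\Cc$, and thus $\limset(P_j) \cap \partiali\Cc = \limset(P_j)$. Substituting into the definition of $X_j$ yields $X_j = \CH_\Omega(\limset(P_j)) \cap \Omega = \core(P_j)$.

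With this identification, Theorem~\ref{thm:main}(c) says that $(\Omega, \core(P_j), P_j)$ is a naive convex co-compact triple, which is exactly the statement that $P_j$ is a convex co-compact subgroup of $\Aut(\Omega)$ in the sense of Definition~\ref{defn:cc}. Part (b) of the lemma is then immediate from Theorem~\ref{thm:main}(a) applied to the peripheral family $\Xc = \Gamma \cdot \{X_1, \dots, X_m\} = \Gamma \cdot \{\core(P_1), \dots, \core(P_m)\}$.

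There is no real obstacle here beyond the inclusion $\limset(P_j) \subset \partiali\Cc$, which is where convex co-compactness of $\Gamma$ (as opposed to merely naive convex co-compactness) is essential: it is what forces the limit set to coincide with the ideal boundary of the core and lets us replace the abstract convex hull in Theorem~\ref{thm:main} by $\core(P_j)$. All other content is a direct invocation of Theorem~\ref{thm:main}.
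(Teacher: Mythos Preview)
Your proof is correct and essentially the same as the paper's: both establish $\limset(P_j) \subset \partiali\Cc$ to identify $X_j$ with $\core(P_j)$, then deduce co-compactness of $P_j$ on $\core(P_j)$ and relative hyperbolicity of $(\Cc,\hil)$. The only cosmetic difference is that you invoke Theorem~\ref{thm:main}(a),(c) whereas the paper cites the underlying Proposition~\ref{prop:rh_implies_isolation} directly, but since the former is derived from the latter this is the same argument.
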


\begin{proof}
Since $\limset(P_j) \subset \limset(\Gamma) \subset \partiali\Cc$, 
$$
\core(P_j)=\CH_\Omega (\limset(P_j)) \cap \Omega = \CH_\Omega(\limset(P_j) \cap \partiali \Cc) \cap \Omega. 
$$ 
Then Proposition~\ref{prop:rh_implies_isolation} implies that $P_j$ acts co-compactly on $\core(P_j)$. Then $(\Cc,\hil)$ is relatively hyperbolic with respect to $\Xc$. 
\end{proof}

If $X \in \Xc$, then  $X=\gamma\core(P_j)=\core(\gamma P_j \gamma^{-1})$ for some $\gamma \in \Gamma$ and $1 \leq j \leq m$.  Then, since $P_j$ is a convex co-compact subgroup of $\Aut(\Omega)$, Observation \ref{obs: contains faces} implies that
\begin{align}
\label{eqn:face-x}
F_\Omega(\partiali X) = \partiali X = \Lc_\Omega(\gamma P_j \gamma^{-1}).
\end{align}

\begin{lemma}[Part (e)]
\label{lem: line segements are in Pj limit sets} If $\ell \subset \partiali\core(\Gamma)$ is a non-trivial line segment, then $\ell \subset\Lc_\Omega( \gamma P_j \gamma^{-1})$ for some $\gamma \in \Gamma$ and $P_j \in \Pc$.
\end{lemma}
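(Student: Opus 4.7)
Since $\Xc=\Gamma\cdot\{\core(P_1),\dots,\core(P_m)\}$ is a peripheral family of $(\Omega,\core(\Gamma),\Gamma)$ (this uses Theorem~\ref{thm:main}, invoked via parts (a) and (c) earlier in the section), the dichotomy in Theorem~\ref{thm:boundary_faces}(7) applies to any segment in $\partiali\core(\Gamma)$. Writing $\ell=[x,y]$ with $x\neq y$, we get that either (i) $F_\Omega(x)=F_\Omega(y)$, or (ii) there exists $X\in\Xc$ with $x,y\in F_\Omega(\partiali X)$. In each case, the plan is to exhibit some $X\in\Xc$ with $\ell\subset\partiali X$; writing $X=\core(\gamma P_j\gamma^{-1})$ and using Observation~\ref{obs: contains faces}(2), this gives $\partiali X=\Lc_\Omega(\gamma P_j\gamma^{-1})$ and finishes the argument.

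Alternative (ii) is essentially automatic. Equation~\eqref{eqn:face-x} collapses $F_\Omega(\partiali X)$ to $\partiali X$, so $x,y\in\partiali X$. As $X\subset\Omega$ is a closed projectively convex set, its closure $\overline{X}\subset\overline{\Omega}$ is convex in $\Pb(\Rb^d)$, and therefore $[x,y]\subset\overline{X}$. Since $\ell\subset\partial\Omega$, this forces $\ell\subset\overline{X}\cap\partial\Omega=\partiali X$.

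Alternative (i) is the substantive case. Set $F:=F_\Omega(x)=F_\Omega(y)$. Because $x\neq y$, the open face $F$ has dimension at least one, so $(F,\dist_F)$ is a nontrivial Hilbert geometry. The key elementary observation is that any nontrivial Hilbert geometry has infinite diameter in its own Hilbert metric: one simply lets points run out to $\partial F$ along a line through a fixed basepoint and computes the cross ratio. Observation~\ref{obs: contains faces}(1) shows $F\subset\partiali\core(\Gamma)$, so $F\cap\partiali\core(\Gamma)=F$ and hence $\diam_{F}\bigl(F\cap\partiali\core(\Gamma)\bigr)=\infty$. This makes the ``large face'' hypothesis of Theorem~\ref{thm:main}(d) vacuously hold, producing $X\in\Xc$ with $x\in F_\Omega(\partiali X)=\partiali X$ (again via Equation~\eqref{eqn:face-x}). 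Theorem~\ref{thm:boundary_faces}(6), applied with $[x,y]\subset\partiali\core(\Gamma)$, then upgrades this endpoint containment to $\ell\subset F_\Omega(\partiali X)=\partiali X$.

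The main subtle point is this last application of Theorem~\ref{thm:main}(d): the ``large face'' hypothesis comes for free in the convex co-compact setting because of the combined effect of Observation~\ref{obs: contains faces}(1) (which saturates $\partiali\core(\Gamma)$ under the face relation) and the infinite-diameter property of nontrivial Hilbert geometries. This is precisely the simplification that is unavailable for a general naive convex co-compact triple, where $F_\Omega(x)\cap\partiali\Cc$ may be a proper small subset of $F_\Omega(x)$ and so a diameter bound on the former carries no information.
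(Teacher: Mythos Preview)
Your proof is correct and follows essentially the same approach as the paper's: use Observation~\ref{obs: contains faces} to see that the relevant face has infinite diameter, invoke Theorem~\ref{thm:main}(d) to produce some $X\in\Xc$, and then conclude via Equation~\eqref{eqn:face-x}. The only difference is organizational: you first apply the dichotomy of Theorem~\ref{thm:boundary_faces}(7) to the endpoints of $\ell$ and split into two cases, whereas the paper avoids the case split entirely by working with a single point $x\in\relint(\ell)$, which automatically satisfies $\dim F_\Omega(x)\ge 1$ and $\relint(\ell)\subset F_\Omega(x)$, so one application of Theorem~\ref{thm:main}(d) and Equation~\eqref{eqn:face-x} gives $\relint(\ell)\subset\partiali X$ directly (then close up using that $\Lc_\Omega(\gamma P_j\gamma^{-1})$ is closed). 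Your appeal to Theorem~\ref{thm:boundary_faces}(6) in case~(i) is also a slight overkill: once $x\in\partiali X$ and $F_\Omega(x)=F_\Omega(y)$, convexity of faces already gives $\ell\subset F_\Omega(x)\subset F_\Omega(\partiali X)=\partiali X$.
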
 

\begin{proof} Fix $x \in \relint(\ell)$. Then $\dim F_\Omega(x) \geq 1$ and so, by Observation \ref{obs: contains faces},  
$$
\diam_{F_\Omega(x)} \left( F_\Omega(x) \cap \partiali\Cc \right) = \diam_{F_\Omega(x)} F_\Omega(x) = \infty.
$$
Thus by part (d) of Theorem~\ref{thm:main} there exist $\gamma \in \Gamma$ and $P_j \in \Pc$ such that
$$
x \in F_\Omega\left( \partiali X \right)
$$
where $X=\core(\gamma P_j \gamma^{-1}) = \gamma \core(P_j)$. Then Equation \eqref{eqn:face-x} implies that
\begin{align*}
\relint(\ell) \subset F_{\Omega}(x) \subset  F_{\Omega}(\partiali X) = \limset( \gamma P_j \gamma^{-1}).
\end{align*}
Since $\limset( \gamma P_j \gamma^{-1})$ is closed, see Observation \ref{obs: contains faces},  this completes the proof. 
\end{proof} 

Recall that, by definition, $[\partiali\core(\Gamma)]_{\Pc}$ is obtained from $\partiali \core(\Gamma)$ by collapsing each $\Lc_{\Omega}(\gamma P_j \gamma^{-1})$ to a point. Hence Equation \eqref{eqn:face-x} and Lemma~\ref{lem: line segements are in Pj limit sets} imply that 
\begin{equation}
\label{eqn:equality_of_quotients}
[ \partiali \Cc]_{\Xc} = [\partiali \Cc]_{\Pc}.
\end{equation}

\begin{lemma}[Part (b)]
There is a $\Gamma$-equivariant homeomorphism 
$$
\partial(\Gamma,\Pc) \rightarrow [\partiali\core(\Gamma)]_{\Pc}.
$$
\end{lemma}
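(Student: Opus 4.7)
The plan is to obtain this homeomorphism as a direct corollary of part (b) in Theorem~\ref{thm:main}, combined with the identification of quotients already established as Equation~\eqref{eqn:equality_of_quotients}. Since $(\Omega,\core(\Gamma),\Gamma)$ is a naive convex co-compact triple and $\Xc=\Gamma\cdot\{\core(P_1),\ldots,\core(P_m)\}$ is a peripheral family of this triple (this follows from the already-proven part (a) via Proposition~\ref{prop:immediate_consequences_of_DS05}), Theorem~\ref{thm:main}(b) produces a $\Gamma$-equivariant homeomorphism
\begin{equation*}
\partial(\Gamma,\Pc) \;\longrightarrow\; [\partiali\core(\Gamma)]_{\Xc}.
\end{equation*}

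Next I would invoke Equation~\eqref{eqn:equality_of_quotients}, which asserts $[\partiali\core(\Gamma)]_{\Xc} = [\partiali\core(\Gamma)]_{\Pc}$, to conclude. Composing these identifications yields the desired $\Gamma$-equivariant homeomorphism. For clarity, I would briefly recall why the two quotients coincide in the convex co-compact setting: in $[\partiali\core(\Gamma)]_{\Xc}$ one collapses either (i) $F_\Omega(\partiali X)\cap\partiali\core(\Gamma)$ for $X\in\Xc$, or (ii) each open face $F_\Omega(x)$ of $\Omega$ intersected with $\partiali\core(\Gamma)$. For $X=\gamma\core(P_j)$, Equation~\eqref{eqn:face-x} gives $F_\Omega(\partiali X)=\Lc_\Omega(\gamma P_j\gamma^{-1})$, matching the identifications made in $[\partiali\core(\Gamma)]_{\Pc}$. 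For identifications of type (ii), if $x\neq y$ with $F_\Omega(x)=F_\Omega(y)$, then $[x,y]$ is a non-trivial segment lying in $F_\Omega(x)\subset\partiali\core(\Gamma)$ by Observation~\ref{obs: contains faces}, and Lemma~\ref{lem: line segements are in Pj limit sets} places $[x,y]$ inside some $\Lc_\Omega(\gamma P_j\gamma^{-1})$, so $x$ and $y$ are already identified in $[\partiali\core(\Gamma)]_{\Pc}$.

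There is no substantive obstacle here, as both building blocks are already in hand: Theorem~\ref{thm:main}(b) is the main technical result of the paper, and the collapsing equivalence relations have been shown to agree. The only task is to verify that the homeomorphism supplied by Theorem~\ref{thm:main}(b) descends (trivially, since the relations coincide) to a $\Gamma$-equivariant homeomorphism onto $[\partiali\core(\Gamma)]_{\Pc}$, which is immediate from the $\Gamma$-equivariance of both quotient maps.
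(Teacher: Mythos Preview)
Your proposal is correct and follows exactly the same approach as the paper: invoke Theorem~\ref{thm:main}(b) to obtain the $\Gamma$-equivariant homeomorphism $\partial(\Gamma,\Pc)\to[\partiali\core(\Gamma)]_{\Xc}$, then apply Equation~\eqref{eqn:equality_of_quotients} to identify $[\partiali\core(\Gamma)]_{\Xc}$ with $[\partiali\core(\Gamma)]_{\Pc}$. The paper's proof is a single sentence citing these two ingredients; your additional recapitulation of why the quotients coincide is accurate but not needed, since that argument was already recorded just before Equation~\eqref{eqn:equality_of_quotients}.
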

\begin{proof}
Follows immediately from Theorem \ref{thm:main} part (b) and Equation \eqref{eqn:equality_of_quotients}.
\end{proof}

To prove part (d) of Theorem~\ref{thm:main_cc} we will use the following lemma. 

\begin{lemma} [{\cite[Lemma 15.5]{IZ2019b}}]
\label{lem:scaling_near_non_C1_point}
Assume that $x \in \partiali \Cc$ is not a $C^1$-smooth point of $\partial \Omega$ and $q \in \Cc$. For any $r > 0$ and $\epsilon > 0$ there exists $q_{r,\epsilon} \in (x,q]$ with the following property: if $p \in  (x,q_{r,\epsilon}]$, then there exists a properly embedded simplex $S=S(p) \subset \Cc$ of dimension at least two such that 
\begin{align}
\label{eq:ball_near_simplex_3}
\Bc_{\Omega}(p,r) \cap (x,q] \subset \Nc_\Omega(S,\epsilon).
\end{align}
\end{lemma}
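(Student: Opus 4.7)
\textbf{Proof proposal for Lemma~\ref{lem:scaling_near_non_C1_point}.}

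The plan is to argue by contradiction using the same translate-extract-limit-pull-back scheme that drove Proposition~\ref{prop:finding_simplices}. Assume the lemma fails for some $r,\epsilon > 0$. Then one can extract a sequence $p_n \in (x,q]$ with $p_n \to x$ and, for each $n$, a point $y_n \in \Bc_\Omega(p_n,r) \cap (x,q]$ such that no properly embedded simplex $S \subset \Cc$ of dimension at least two satisfies $y_n \in \Nc_\Omega(S,\epsilon)$. Since $\Gamma$ acts co-compactly on $\Cc$, choose $\gamma_n \in \Gamma$ with $\gamma_n(p_n)$ lying in a fixed compact subset of $\Cc$. After extracting a subsequence, we may assume $\gamma_n(p_n) \to p_\infty \in \Cc$, $\gamma_n(y_n) \to y_\infty \in \overline{\Bc_\Omega(p_\infty,r)}$, $\gamma_n(x) \to x_\infty$, $\gamma_n(q) \to q_\infty$, and $\gamma_n \to T$ in $\Pb(\End(\Rb^d))$. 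Since $\dist_\Omega(p_n,x), \dist_\Omega(p_n,q) \to \infty$, the points $x_\infty \neq q_\infty$ lie in $\partiali\Cc$ and the open segment $(x_\infty,q_\infty) \subset \Omega$ passes through $p_\infty$ and $y_\infty$.

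The heart of the proof is to upgrade this limiting configuration to a properly embedded 2-simplex $S_\infty \subset \Cc$ with $y_\infty \in \Nc_\Omega(S_\infty,\epsilon/2)$; once this is available, $\gamma_n^{-1}(S_\infty) \subset \Cc$ gives a properly embedded simplex with $y_n \in \Nc_\Omega(\gamma_n^{-1}(S_\infty),\epsilon)$ for large $n$, contradicting the choice of $\{y_n\}$. The non-$C^1$ hypothesis at $x$ drives this construction: each $\gamma_n(x)$ is a non-$C^1$ point of $\Omega$ (since $\gamma_n \in \PGL_d(\Rb)$ preserves $C^1$-smoothness), so the dual face of supporting hyperplanes at $\gamma_n(x)$ in $\Pb((\Rb^d)^*)$ has positive dimension. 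Choosing extreme points $\phi_n^{(1)},\phi_n^{(2)}$ of this dual face and passing to the limit in $\Pb((\Rb^d)^*)$ produces either (i) distinct supporting hyperplanes at $x_\infty$, in which case $x_\infty$ itself is non-$C^1$ in $\Omega$, or (ii) a degeneration whose rescaling detects a non-trivial line segment in $\partial\Omega$ accumulating at $x_\infty$. Either outcome, combined with Observation~\ref{obs: contains faces} (in the convex co-compact setting any open face meeting $\partiali\Cc$ is contained in $\partiali\Cc$) and the simplex-production argument from~\cite[Theorem 1.15]{DGF2017}, yields three vertices $v_1,v_2,v_3 \in \partiali\Cc$ spanning a properly embedded 2-simplex $S_\infty$ in $\Cc$ whose relative interior meets the segment $(x_\infty,q_\infty)$ close enough to $p_\infty$ to capture $y_\infty$ within $\epsilon/2$.

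The main obstacle is precisely the dichotomy in the previous paragraph: non-$C^1$ smoothness is not closed under the projective limit, so one must track the dual face at $\gamma_n(x)$ carefully in $\Pb((\Rb^d)^*)$ (via the dynamics dual to Proposition~\ref{prop:dynamics_of_automorphisms_1}) to ensure that enough of the non-$C^1$ information is retained at $x_\infty$ either as genuine non-$C^1$ smoothness or as a usable boundary segment. Once that dual analysis is carried out, the construction of $S_\infty$ from the limiting data and the pull-back step via $\gamma_n^{-1}$ are routine. One further technical point is that the span of $\{x_\infty,q_\infty,v_1,v_2,v_3\}$ must be arranged to contain a 2-plane in which the relevant triangle is bounded by boundary segments — a condition that is straightforward to verify in case (i) but delicate in case (ii), where the boundary segment detected by the rescaling has to be combined with the direction of $(x_\infty,q_\infty)$ to span the simplex.
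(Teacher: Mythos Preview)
The paper does not supply a proof of this lemma; it is quoted verbatim from \cite[Lemma~15.5]{IZ2019b} and used as a black box in the proof of Theorem~\ref{thm:main_cc}(d). So there is no ``paper's proof'' to compare against here, only the question of whether your proposal stands on its own.

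There is a genuine logical gap in how you set up the contradiction. The negation of the lemma gives, for each $p_n$, that \emph{no single} properly embedded simplex $S$ satisfies $\Bc_\Omega(p_n,r)\cap(x,q]\subset\Nc_\Omega(S,\epsilon)$. This means: for every simplex $S$ there is \emph{some} witness $y_n(S)\in\Bc_\Omega(p_n,r)\cap(x,q]$ outside $\Nc_\Omega(S,\epsilon)$. It does \emph{not} give you a single point $y_n$ lying outside the $\epsilon$-neighborhood of \emph{every} simplex, which is what you claim to extract. The fix is to drop $y_n$ entirely and instead aim to produce a single $S_\infty$ with $\overline{\Bc_\Omega(p_\infty,r)}\cap[x_\infty,q_\infty]\subset\Nc_\Omega(S_\infty,\epsilon/2)$; pulling back by $\gamma_n^{-1}$ then covers the whole segment $\Bc_\Omega(p_n,r)\cap(x,q]$ for large $n$, which is the correct contradiction.

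The more serious issue is the heart of the argument. Your case (i) --- ``$x_\infty$ is itself non-$C^1$'' --- does not advance the situation: you are back to exactly the hypothesis you started with (a non-$C^1$ ideal boundary point and a transversal segment), and nothing has been gained. The reference to \cite[Theorem~1.15]{DGF2017} does not help here, since that result produces simplices from \emph{line segments} in $\partiali\Cc$, not from non-$C^1$ points; a non-$C^1$ point need not lie on any boundary segment. Your case (ii) is phrased as ``a degeneration whose rescaling detects a non-trivial line segment,'' but no mechanism is given for this rescaling, and it is not clear what object is being rescaled once you have already passed to the limit $T$ in $\Pb(\End(\Rb^d))$. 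What is actually needed is a careful analysis (as in \cite{IZ2019b}) of how the cone of supporting hyperplanes at $\gamma_n(x)$ interacts with the limit geometry to force a genuine $2$-simplex with one vertex at $x_\infty$; your outline identifies the right obstacle but does not resolve it.
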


Now the proof of part (d).

\begin{lemma}[Part(d)] 
If $x \in \partiali\core(\Gamma)$ is not a $\Cc^1$-smooth point of $\Omega$, then $x \in \Lc_\Omega(\gamma P_j\gamma^{-1})$ for some $\gamma \in \Gamma$ and $P_j \in \Pc$. 
\end{lemma}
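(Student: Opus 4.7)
The plan is to combine Lemma~\ref{lem:scaling_near_non_C1_point} with two key facts about $\Xc := \Gamma \cdot \{\core(P_1), \ldots, \core(P_m)\}$: that $\Xc$ coarsely contains all properly embedded simplices of $\core(\Gamma)$ of dimension $\geq 2$, and that $\Xc$ is strongly isolated. Both properties hold because $\Xc$ is a peripheral family by Theorem~\ref{thm:main}(1) (applied in the convex co-compact case, where $\core(P_j) = \CH_\Omega(\Lc_\Omega(P_j) \cap \partiali\core(\Gamma)) \cap \Omega$). Once I identify an element $X \in \Xc$ with $x \in F_\Omega(\partiali X)$, Equation~\eqref{eqn:face-x} will immediately translate this to $x \in \Lc_\Omega(\gamma P_j \gamma^{-1})$, where $\gamma \in \Gamma$ and $P_j \in \Pc$ are such that $X = \gamma \core(P_j)$.

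First, I would fix $q \in \core(\Gamma)$ and set up quantitative constants: let $D_2 > 0$ witness the coarse containment of simplices by $\Xc$, set $r_0 := D_2 + 1$, and let $D_1 := D_1(r_0)$ be the strong isolation bound at radius $r_0$. Next, choose $r > D_1 + 1$ and apply Lemma~\ref{lem:scaling_near_non_C1_point} with this $r$ and $\epsilon = 1$ to the non-$C^1$-smooth point $x$ and the chosen $q$. This produces $q' \in (x, q]$ such that for each $p \in (x, q']$ there is a properly embedded simplex $S(p) \subset \core(\Gamma)$ of dimension $\geq 2$ with $\Bc_\Omega(p, r) \cap (x, q] \subset \Nc_\Omega(S(p), 1)$, and hence an element $X(p) \in \Xc$ with
$$
\Bc_\Omega(p, r) \cap (x, q] \subset \Nc_\Omega(X(p), r_0).
$$

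The main step, and the one I expect to be the primary obstacle, is to show that $X(p)$ is independent of $p \in (x, q']$. Fix any $p_1, p_2 \in (x, q']$ with $D_1 < d_\Omega(p_1, p_2) < r$. The geodesic segment $[p_1, p_2]$ lies inside both $(x, q]$ and $\Bc_\Omega(p_1, r) \cap \Bc_\Omega(p_2, r)$, so it is contained in $\Nc_\Omega(X(p_1), r_0) \cap \Nc_\Omega(X(p_2), r_0)$ and has diameter $> D_1$. Strong isolation then forces $X(p_1) = X(p_2)$. Covering $(x, q']$ by overlapping segments of length strictly between $D_1$ and $r$ shows that $X := X(p)$ is constant in $p$. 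The subtlety here is that Lemma~\ref{lem:scaling_near_non_C1_point} produces simplices $S(p)$ without any a priori consistency as $p$ varies; the quantitative bound $D_1(r_0)$ coupled with the freedom to make $r$ large is what pins them all to a single element of $\Xc$.

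Finally, I would pick a sequence $p_n \in (x, q']$ with $p_n \to x$ and, for each $n$, a point $y_n \in X$ with $d_\Omega(p_n, y_n) \leq r_0$. After passing to a subsequence $y_n \to y \in \overline{X}$; since $d_\Omega(p_n, y_n)$ stays bounded while $p_n \to x \in \partial\Omega$, necessarily $y \in \partial\Omega$, so $y \in \partiali X$. Proposition~\ref{prop:asymp_sequences} gives $F_\Omega(x) = F_\Omega(y)$, whence $x \in F_\Omega(\partiali X)$. Writing $X = \gamma \core(P_j)$ and invoking Equation~\eqref{eqn:face-x} concludes that $x \in \Lc_\Omega(\gamma P_j \gamma^{-1})$.
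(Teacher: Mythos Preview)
Your proof is correct and follows essentially the same approach as the paper's: apply Lemma~\ref{lem:scaling_near_non_C1_point} with radius larger than the strong-isolation constant for neighborhoods of radius $D_2+1$, then use the overlap to pin all the resulting elements of $\Xc$ to a single $X$, and finish with Proposition~\ref{prop:asymp_sequences} and Equation~\eqref{eqn:face-x}. The only differences are cosmetic (the paper uses a discrete sequence $\{p_n\}$ with $\dist_\Omega(p_n,p_{n+1})\leq 1$ rather than your chaining over pairs at distance in $(D_1,r)$, and takes $r=D_1+1$).
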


\begin{proof} Fix $q \in \Cc$. By Theorem~\ref{thm:main}, $\Xc$ is a peripheral family of $(\Omega, \Cc, \Gamma)$. So there exist constants $D_1, D_2 > 0$ such that:
\begin{enumerate}[label=(\alph*)]
\item If $S \subset \Cc$ is a properly embedded simplex of dimension at least two, then there exists $X \in \Xc$ with $S \subset \Nc_\Omega(X,D_2)$. 
\item If $X_1, X_2 \in \Xc$ are distinct, then
\begin{align*}
\diam_\Omega \left( \Nc_\Omega(X_1,D_2+1) \cap \Nc_\Omega(X_2,D_2+1) \right) < D_1.
\end{align*}
\end{enumerate}

Let $q^\prime \in (x,q]$ satisfy Lemma~\ref{lem:scaling_near_non_C1_point} for the constants $r=D_1+1$ and $\epsilon = 1$. Fix a sequence $\{p_n\}$ in $(x,q^\prime]$ such that $p_n \rightarrow x$ and $\dist_\Omega(p_n,p_{n+1}) \leq 1$.  For each $n \geq 1$, fix $S_n \subset \Cc$ a properly embedded simplex of dimension at least two such that 
\begin{align*}
\Bc_{\Omega}(p_n,D_1+1) \cap (x,q] \subset \Nc_\Omega(S_n,1).
\end{align*}
Then fix $X_n \in \Xc$ such that 
$$
S_n \subset \Nc_\Omega(X_n, D_2).
$$
Then
\begin{align*}
\Bc_\Omega(p_n, D_1) & \cap (x,q] \subset \Big(\Bc_{\Omega}(p_n,D_1+1) \cap \Bc_\Omega(p_{n+1},D_1+1) \Big)\cap (x,q]  \\
& \subset \Nc_\Omega(X_n,D_2+1) \cap \Nc_\Omega(X_{n+1},D_2+1).
\end{align*} 
Since $\diam_\Omega( \Bc_\Omega(p_n, D_1) \cap (x,q] ) \geq D_1$, this implies that there exists $X \in \Xc$ such that $X=X_n$ for all $n$ large enough. Then 
$$
(x,q^\prime] \subset \Nc_\Omega(X, D_2+1).
$$

Now $X = \core(\gamma P_j \gamma^{-1})$ for some $\gamma \in \Gamma$ and $P_j \in \Pc$. Finally, Proposition~\ref{prop:asymp_sequences} and Equation \eqref{eqn:face-x} implies that $x \in \partiali \core(\gamma P_j \gamma^{-1}) = \limset( \gamma P_j \gamma^{-1})$. 
\end{proof}

\subsection{Proof of Proposition~\ref{prop: peripheral family in cc case}} Suppose that $\Gamma \leq\Aut(\Omega)$ is a convex co-compact subgroup and $\Xc$ is a $\Gamma$-invariant collection of closed unbounded convex subsets of $\Omega$.

\medskip

\noindent \textbf{(1) $\Rightarrow$ (2):} Suppose that $\Xc$ is a peripheral family of $(\Omega, \core(\Gamma), \Gamma)$.

\begin{lemma} $\Xc$ is closed in the local Hausdorff topology induced by the Hilbert metric $\dist_\Omega$. \end{lemma}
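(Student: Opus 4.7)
The plan is to exploit the finiteness result for peripheral families established earlier as Lemma~\ref{lem:finitely_many_simplices_in_compact_set}: any compact subset of $\core(\Gamma)$ meets only finitely many elements of $\Xc$. This should force a locally Hausdorff convergent sequence in $\Xc$ to be eventually constant, which immediately yields the closure statement.

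More precisely, suppose $\{X_n\}$ is a sequence in $\Xc$ which converges in the local Hausdorff topology (on closed subsets of $\core(\Gamma)$) to a non-empty closed set $X$. First I would fix a point $p \in X$. Since the Hilbert metric $\dist_\Omega$ is proper, the closed ball $\overline{\Bc_\Omega(p,1)}$ is compact. By the defining property of local Hausdorff convergence, for all sufficiently large $n$ there exists $p_n \in X_n$ with $\dist_\Omega(p_n, p) < 1$, so eventually $X_n \cap \overline{\Bc_\Omega(p,1)} \neq \emptyset$.

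Now I would invoke Lemma~\ref{lem:finitely_many_simplices_in_compact_set} (which applies since $\Xc$ is assumed to be a peripheral family of $(\Omega, \core(\Gamma), \Gamma)$) with $K = \overline{\Bc_\Omega(p,1)}$ to conclude that the set $\{Y \in \Xc : Y \cap K \neq \emptyset\}$ is finite. Consequently the sequence $\{X_n\}$ takes only finitely many distinct values for $n$ large.

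Finally, any value $Y$ attained infinitely often by $\{X_n\}$ determines a constant subsequence, which trivially converges to $Y$ in the local Hausdorff topology; but subsequences of a convergent sequence share the limit, so $Y = X$. Thus the finitely many values are all equal to $X$, so $X_n = X$ for all large $n$, giving $X \in \Xc$. No substantial obstacle is expected here — the result is essentially a direct consequence of Lemma~\ref{lem:finitely_many_simplices_in_compact_set} combined with the properness of the Hilbert metric.
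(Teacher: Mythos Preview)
Your proof is correct and follows essentially the same approach as the paper, which simply states that the lemma follows immediately from Lemma~\ref{lem:finitely_many_simplices_in_compact_set}; you have filled in the routine details that the paper omits.
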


\begin{proof} This follows immediately from Lemma~\ref{lem:finitely_many_simplices_in_compact_set}.\end{proof} 

\begin{lemma}  If $X_1, X_2 \in \Xc$ are distinct, then $\partiali X_1 \cap \partiali X_2 = \emptyset$. \end{lemma}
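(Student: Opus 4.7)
The plan is to invoke Theorem~\ref{thm:boundary_faces} part (3), which was proved in the more general setting of an arbitrary naive convex co-compact triple with a peripheral family. Since $\Gamma \leq \Aut(\Omega)$ is convex co-compact, $(\Omega, \core(\Gamma), \Gamma)$ is a naive convex co-compact triple, and by hypothesis $\Xc$ is a peripheral family of this triple; thus the theorem applies directly and tells us that $F_\Omega(\partiali X_1) \cap F_\Omega(\partiali X_2) = \emptyset$ whenever $X_1, X_2 \in \Xc$ are distinct.

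To conclude, observe that for any properly convex subset $Y \subset \overline{\Omega}$ and any $x \in Y$, the open face $F_\Omega(x)$ contains $x$ by definition (see Definition~\ref{defn:open_faces}). Applied to $Y = \partiali X_j$, this gives the trivial containment
\begin{align*}
\partiali X_j \;\subset\; \bigcup_{x \in \partiali X_j} F_\Omega(x) \;=\; F_\Omega(\partiali X_j)
\end{align*}
for $j=1,2$. Intersecting the two inclusions yields
\begin{align*}
\partiali X_1 \cap \partiali X_2 \;\subset\; F_\Omega(\partiali X_1) \cap F_\Omega(\partiali X_2) \;=\; \emptyset,
\end{align*}
which is the desired conclusion. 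There is no real obstacle here: the entire content of the lemma is already packaged in Theorem~\ref{thm:boundary_faces}(3), and the only step is the trivial observation that an open face contains its own points.
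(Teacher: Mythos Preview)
Your proof is correct. You cite Theorem~\ref{thm:boundary_faces}(3), which applies since $\Xc$ is a peripheral family of the naive convex co-compact triple $(\Omega,\core(\Gamma),\Gamma)$, and then use the trivial containment $\partiali X_j \subset F_\Omega(\partiali X_j)$.

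The paper takes a slightly different route: rather than quoting Theorem~\ref{thm:boundary_faces}(3), it gives a direct argument from the strong isolation property. Assuming $x\in\partiali X_1\cap\partiali X_2$, one picks $q_j\in X_j$ and uses Proposition~\ref{prop:Hausdorff_distance_between_lines} to bound $\dist_\Omega^{\Haus}((x,q_1],(x,q_2])\leq\dist_\Omega(q_1,q_2)$, so $(x,q_1]$ lies in the intersection of two neighborhoods of $X_1$ and $X_2$ of fixed radius but infinite diameter, forcing $X_1=X_2$. This is essentially the proof of Lemma~\ref{lem:intersecting_faces} specialized to the case $x_1=x_2=x$. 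Your approach is shorter and perfectly legitimate since the general result is already available; the paper's version is more self-contained but effectively re-derives a special case of what was already proved.
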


\begin{proof}We prove the contrapositive. Suppose $x \in \partiali X_1 \cap \partiali X_2$. Fix $q_1 \in X_1$ and $q_2 \in X_2$. Then 
$$
\dist_{\Omega}^{\Haus}( (x,q_1], (x,q_2]) \leq \dist_\Omega(q_1,q_2)
$$
by Proposition~\ref{prop:Hausdorff_distance_between_lines}. So
$$
(x,q_1] \subset \Nc_\Omega( X_1, \dist_\Omega(q_1,q_2)+1) \cap \Nc_\Omega( X_2, \dist_\Omega(q_1,q_2)+1)
$$
and hence $X_1 = X_2$. 
\end{proof}

\begin{lemma}If $\ell \subset \partiali\core(\Gamma)$ is a non-trivial line segment, then $\ell \subset \partiali X$ for some $X \in \Xc$. 
\end{lemma}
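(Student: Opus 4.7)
My plan is to pick a point $z \in \relint(\ell)$ and show that the entire open face $F_\Omega(z)$ is contained in $\partiali X$ for some $X \in \Xc$. Once this is done, since $\ell$ minus its endpoints is contained in $F_\Omega(z)$ (the open segment $(a,b) \subset \partial\Omega$ containing $z$ witnesses every interior point of $\ell$ as a point of $F_\Omega(z)$) and since $\partiali X$ is closed in $\partial\Omega$, we will immediately get $\ell \subset \partiali X$.

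To locate the right $X$, I would first exploit convex co-compactness: by Observation \ref{obs: contains faces}(1), $F_\Omega(z) \subset \partiali\core(\Gamma)$, and since $(a,b) \subset F_\Omega(z)$, this face has dimension at least one and hence infinite Hilbert diameter. The hypothesis of Theorem \ref{thm:boundary_faces}(4) is therefore satisfied at $z$, producing $X \in \Xc$ together with $x' \in \partiali X$ satisfying $F_\Omega(z) = F_\Omega(x')$. Applying Theorem \ref{thm:boundary_faces}(5) at this $x'$, and using that $F_\Omega(z) \cap \partiali\core(\Gamma) = F_\Omega(z)$, I would obtain
\begin{align*}
\dist^{\Haus}_{F_\Omega(z)}\bigl(F_\Omega(z),\, F_\Omega(z) \cap \partiali X\bigr) \leq R
\end{align*}
for the constant $R$ furnished by the peripheral family.

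The main step, and the essential obstacle, is to promote this bound to the equality $F_\Omega(z) \cap \partiali X = F_\Omega(z)$. Since $F_\Omega(z) \cap \partiali X = F_\Omega(z) \cap \overline{X}$ is a closed convex subset of the properly convex open set $F_\Omega(z)$, the required statement is the general Hilbert-geometric fact that a proper closed convex subset $A \subsetneq F$ of a properly convex open set $F$ with $\dim F \geq 1$ cannot lie within bounded Hausdorff distance of $F$. I would prove this by choosing $p \in F \setminus A$, strictly separating $p$ from $\overline{A}$ (taken in an affine chart containing $\overline{F}$) by a projective hyperplane, and selecting an ideal boundary point $\xi$ of $F$ on the opposite side of this hyperplane from $A$; then $\xi \notin \overline{A}$ inside $\overline{F}$, so along any Hilbert geodesic ray in $F$ converging to $\xi$ the Hilbert distance to $A$ diverges. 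Combining this with the previous paragraph forces $F_\Omega(z) \subset \partiali X$, and the proof of the lemma is complete.
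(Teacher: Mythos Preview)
Your overall strategy matches the paper's: pick $z \in \relint(\ell)$, use Observation~\ref{obs: contains faces} and Theorem~\ref{thm:boundary_faces}(4) to find $X \in \Xc$ with $z \in F_\Omega(\partiali X)$, then invoke Theorem~\ref{thm:boundary_faces}(5) to get $\dist_{F_\Omega(z)}^{\Haus}\bigl(F_\Omega(z),\, F_\Omega(z)\cap\partiali X\bigr)\le R$, and finally upgrade this to $F_\Omega(z)\subset\partiali X$.

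The gap is in your final step. The implication ``$\xi \notin \overline{A}$, hence the Hilbert distance to $A$ diverges along rays to $\xi$'' is false in general: what Proposition~\ref{prop:asymp_sequences} actually gives is that bounded distance forces limits to lie in the \emph{face} $F_F(\xi)$, so you need $F_F(\xi)\cap\overline{A}=\emptyset$, not merely $\xi\notin\overline{A}$. Your separating hyperplane does not prevent $F_F(\xi)$ from crossing back into $\overline{A}$. For a concrete failure, take $F=(0,1)^2$ with its Hilbert metric, $A=\{x\le 1/2\}\cap F$, and $\xi=(0.8,1)$: then $\xi\notin\overline{A}$, yet the sequence $q_n=(0.8,1-1/n)$ stays within distance $\log 2$ of $A$ (via $(0.5,1-1/n)\in A$), since the open top edge $F_F(\xi)$ meets $\overline{A}$. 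The paper avoids this by going straight to extreme points: since $\overline{A}=\overline{F_\Omega(z)\cap\partiali X}\subset\overline{X}$ is closed and convex, it suffices to show every extreme point $e$ of $\overline{F_\Omega(z)}$ lies in $\partiali X$; and for such $e$ one has $F_{F_\Omega(z)}(e)=\{e\}$, so Proposition~\ref{prop:asymp_sequences} applied to a sequence in $F_\Omega(z)$ converging to $e$ (together with nearby points of $\partiali X$ supplied by the Hausdorff bound) forces $e\in\partiali X$. Your argument is easily repaired the same way: instead of producing $\xi$ by separation, take $\xi$ to be an extreme point of $\overline{F}$ not in $\overline{A}$, which exists because $\overline{A}\subsetneq\overline{F}$ is closed convex and $\overline{F}$ is the closed convex hull of its extreme points.
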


\begin{proof}Fix $x \in \relint(\ell)$. Then $\dim F_\Omega(x) \geq 1$ and so by Observation~\ref{obs: contains faces}, 
$$
\diam_{F_\Omega(x)} \left( F_\Omega(x) \cap \partiali\Cc \right) = \diam_{F_\Omega(x)} F_\Omega(x) = \infty.
$$
Thus by Theorem~\ref{thm:boundary_faces} part (4) there exists $X \in \Xc$ with $x \in F_\Omega(\partiali X)$. We claim that $F_\Omega(x) \subset \partiali X$ which will imply the lemma.

To show that $F_\Omega(x) \subset \partiali X$, it suffices to fix an extreme point $e \in \partial F_\Omega(x)$ of $F_\Omega(x)$ and show that $\partiali X$ contains $e$. By Observation~\ref{obs: contains faces} and Theorem~\ref{thm:boundary_faces} part (5) there exists $R > 0$ such that 
$$
\dist_{F_\Omega(x)}^{\Haus}( F_\Omega(x), \partiali X \cap F_\Omega(x)) \leq R. 
$$
Since $F_{F_\Omega(x)}(e) = \{e\}$, then Proposition~\ref{prop:asymp_sequences} implies that $e \in \partiali X$. 
\end{proof}  

\noindent \textbf{(2) $\Rightarrow$ (1):} Suppose that $\Xc$ has the following properties: 
\begin{enumerate}[label=(\alph*)]
\item $\Xc$ is closed in the local Hausdorff topology induced by the Hilbert metric $\dist_\Omega$. 
\item If $X_1, X_2 \in \Xc$ are distinct, then $\partiali X_1 \cap \partiali X_2 = \emptyset$. 
\item If $\ell \subset \partiali\core(\Gamma)$ is a non-trivial line segment, then $\ell \subset \partiali X$ for some $X \in \Xc$. 
\end{enumerate} 

\begin{lemma}\label{lem : disjoint faces in endgame} If $X_1, X_2 \in \Xc$ and $F_\Omega(\partiali X_1) \cap F_\Omega(\partiali X_2) \neq \emptyset$, then $X_1 = X_2$. \end{lemma}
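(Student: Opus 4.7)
The plan is to reduce the hypothesis $F_\Omega(\partiali X_1) \cap F_\Omega(\partiali X_2) \neq \emptyset$ to the existence of a non-trivial line segment in $\partiali \core(\Gamma)$ whose endpoints lie in $\partiali X_1$ and $\partiali X_2$ respectively, and then to invoke hypothesis (b) twice via hypothesis (c).

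First I would pick $z \in F_\Omega(\partiali X_1) \cap F_\Omega(\partiali X_2)$, and by definition of $F_\Omega(\partiali X_i)$ choose $x_i \in \partiali X_i$ with $z \in F_\Omega(x_i)$ for $i=1,2$. Observation~\ref{obs:faces} part (3) then gives $F_\Omega(x_1) = F_\Omega(z) = F_\Omega(x_2)$, and in particular $z \in \partial\Omega$. If $x_1 = x_2$, then $x_1 \in \partiali X_1 \cap \partiali X_2$ and hypothesis (b) directly yields $X_1 = X_2$, so I can reduce to the case $x_1 \neq x_2$.

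The heart of the argument is to show that the segment $[x_1, x_2]$ lies inside $\partiali \core(\Gamma)$. Since the open face $F_\Omega(z)$ is convex, we have $[x_1, x_2] \subset F_\Omega(z)$. On the other hand $x_1 \in F_\Omega(z) \cap \partiali \core(\Gamma)$, so Observation~\ref{obs: contains faces}(1)---the key feature of the convex co-compact (as opposed to naive convex co-compact) setting---upgrades this to $F_\Omega(z) \subset \partiali \core(\Gamma)$. Thus $[x_1, x_2]$ is a non-trivial line segment in $\partiali \core(\Gamma)$. Hypothesis (c) then supplies some $X \in \Xc$ with $[x_1, x_2] \subset \partiali X$, so $x_i \in \partiali X \cap \partiali X_i$ for $i=1,2$, and two applications of hypothesis (b) force $X = X_1 = X_2$.

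I do not foresee a serious obstacle: the argument is a direct chain from the three listed properties of $\Xc$ and the standard face-containment identity, with no estimates or compactness arguments required. The only subtle point worth emphasizing is that this lemma really uses convex co-compactness of $\Gamma$ through Observation~\ref{obs: contains faces}(1); in the merely naive convex co-compact setting $F_\Omega(z) \cap \partiali \Cc$ can be a ``small'' subset of $F_\Omega(z)$, and the proof would fail at exactly this step.
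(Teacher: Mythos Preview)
Your proof is correct and follows essentially the same route as the paper's: pick $x_i \in \partiali X_i$ in a common face, dispose of the case $x_1=x_2$ by property~(b), and otherwise apply property~(c) to the segment $[x_1,x_2]$ followed by two uses of~(b). Your write-up is in fact slightly more careful than the paper's, which jumps from $[x_1,x_2]\subset\partial\Omega$ directly to property~(c); you make explicit the step (via Observation~\ref{obs: contains faces}(1)) that the segment actually lies in $\partiali\core(\Gamma)$, which is what~(c) requires.
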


\begin{proof} By hypothesis  there exists $x_1 \in \partiali X_1$ and $x_2 \in \partiali X_2$ with $F_\Omega(x_1) = F_\Omega(x_2)$. If $x_1 = x_2$, then property (b) implies that $X_1 = X_2$. Otherwise, $[x_1,x_2] \subset F_\Omega(x_1) \subset \partial \Omega$ and so by property (c) there exists $X_3$ with $[x_1,x_2] \subset \partiali X_3$. But then by property (b), $X_1 = X_3 = X_2$. 
\end{proof}

\begin{lemma}\label{lem:discrete in endgame} $\Xc$ is discrete in the local Hausdorff topology induced by the Hilbert metric $\dist_\Omega$. \end{lemma}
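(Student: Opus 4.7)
The plan is to argue by contradiction. Suppose $\{Y_n\} \subset \Xc$ converges in the local Hausdorff topology to some $Y$; by property (a), $Y \in \Xc$, and I will assume $Y_n \neq Y$ for all $n$ in order to derive a contradiction with Lemma \ref{lem : disjoint faces in endgame}, which forces $F_\Omega(\partiali Y_n) \cap F_\Omega(\partiali Y) = \emptyset$ for every $n$.

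The first step is to build a sequence of ideal boundary points of the $Y_n$ converging to a point of $\partiali Y$. Each $Y_n$ is unbounded, so I pick $x_n \in \partiali Y_n$; local Hausdorff convergence also provides $q_n \in Y_n$ with $q_n \to p$ for some fixed $p \in Y$. By convexity the open segment $(q_n, x_n) \subset Y_n$. Passing to a subsequence, $x_n \to y \in \partial\Omega$, and the rays $[q_n, x_n)$ converge locally to $[p, y) \subset \overline\Omega$. Since $[q_n, x_n) \subset Y_n \to Y$ in local Hausdorff and $Y$ is closed, $[p, y) \subset Y$, so $y \in \partiali Y$. Property (b) now gives $x_n \neq y$, and Lemma \ref{lem : disjoint faces in endgame} gives $F_\Omega(x_n) \neq F_\Omega(y)$ for all $n$.

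Next I would analyze the projective segment $[x_n, y] \subset \overline\Omega$. In the principal case, where $[x_n, y] \subset \partial \Omega$ for infinitely many $n$: using that $x_n, y \in \partiali \core(\Gamma)$ together with Observation \ref{obs: contains faces} (any face of $\overline\Omega$ meeting $\partiali \core(\Gamma)$ is contained in it), the whole segment lies inside $\partiali \core(\Gamma)$; property (c) then yields $Z \in \Xc$ with $[x_n, y] \subset \partiali Z$, and property (b) forces $Z = Y_n = Y$, the sought contradiction.

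The main obstacle will be the alternative case, where $(x_n, y) \cap \Omega \neq \emptyset$ for infinitely many $n$. I plan to handle this by exploiting the flexibility in the choices of $x_n$ and $y$: if $\partiali Y$ contains any nontrivial line segment---which, by property (c) combined with (b), must happen whenever $\partiali Y$ has positive-dimensional structure---then picking $y$ in the relative interior of such a segment and choosing $x_n$ approximating $y$ from within the closure of $F_\Omega(y)$ lands us in the principal case. When $\partiali Y$ is totally disconnected (so that each $y \in \partiali Y$ is a $\Cc^1$-smooth point of $\Omega$ with $F_\Omega(y) = \{y\}$), the alternative case is unavoidable for $x_n \to y$ with $x_n \neq y$; here the contradiction will come from a Hilbert-geometric analysis showing that two rays with nearby-but-distinct smooth ideal endpoints must have Hilbert-Hausdorff distance growing unboundedly on large balls around $p$ (via an application of Proposition \ref{prop:Hausdorff_distance_between_lines}), which is incompatible with the local Hausdorff convergence $Y_n \to Y$.
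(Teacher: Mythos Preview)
Your principal case is fine, but the handling of the alternative case has a genuine gap, and the missing idea is the co-compact $\Gamma$-action on $\core(\Gamma)$.

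First, your plan to ``choose $x_n$ approximating $y$ from within the closure of $F_\Omega(y)$'' does not work: $x_n$ is constrained to lie in $\partiali Y_n$, and nothing in the hypotheses forces $\partiali Y_n$ to meet $\overline{F_\Omega(y)}$. So you cannot arrange to land in the principal case by adjusting the choice of $x_n$. Second, the inference ``$\partiali Y$ totally disconnected $\Rightarrow$ every $y\in\partiali Y$ is a $C^1$-smooth extreme point'' is false; these are distinct conditions. Third, and most seriously, the ``Hilbert-geometric analysis'' you sketch cannot give a contradiction: local Hausdorff convergence $Y_n\to Y$ only controls $Y_n\cap\Bc_\Omega(p,R)$ for each fixed $R$, and for any fixed $R$ the rays $[q_n,x_n)$ and $[p,y)$ are genuinely close on $\Bc_\Omega(p,R)$ once $n$ is large. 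The divergence you want happens at scales depending on $n$, exactly where local convergence says nothing. Proposition~\ref{prop:Hausdorff_distance_between_lines} gives an upper bound on Hausdorff distance, not the lower bound your argument would need.

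The paper's proof works quite differently and uses $\Gamma$ in an essential way. Rather than picking $x_n\in\partiali X_n$, one fixes a single $x\in\partiali X$ and uses Lemma~\ref{lem : disjoint faces in endgame} to see that $\dist_\Omega(\cdot,X_n)\to\infty$ along $[p_n,x)$; hence there is $q_n\in[p_n,x)$ with $\dist_\Omega(q_n,X_n)=1$, and necessarily $q_n\to x$. Now translate by $\gamma_n\in\Gamma$ so that $\gamma_n(q_n)$ stays in a compact set, and pass to subsequential limits $\gamma_n(X)\to X_\infty$ and $\gamma_n(X_n)\to Y_\infty$; both lie in $\Xc$ by $\Gamma$-invariance and closedness. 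By construction $\dist_\Omega(q_\infty,Y_\infty)=1$ while $q_\infty\in X_\infty$, so $X_\infty\neq Y_\infty$; on the other hand $[q_n,p_n]\subset\Nc_\Omega(X_n,2)$, which after translation and passing to the limit forces $p_\infty\in\partiali X_\infty\cap F_\Omega(\partiali Y_\infty)$, contradicting Lemma~\ref{lem : disjoint faces in endgame}. The point is that the $\Gamma$-action lets you renormalize the escaping sequence $q_n$ back to a compact region, turning the ``asymptotic'' divergence into a concrete pair of distinct elements of $\Xc$ sharing a boundary face.
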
 

\begin{proof} Fix a sequence $\{X_n\}$ in $\Xc$ converging to some closed subset $X$. Since $\Xc$ is closed, $X \in \Xc$. Fix $p \in X$ and $x \in \partiali X$. 

We claim that $X_n = X$ when $n$ is sufficiently large. Suppose not. Then after passing to a subsequence we can suppose that $X_n \neq X$ for all $n$. For each $n$ fix $p_n \in X_n$ such that $p_n \rightarrow p$. Passing to a tail of our sequence we can suppose that $\dist_\Omega(p_n, p) < 1$ for all $n$. The previous lemma implies that $x \notin F_\Omega(\partiali X_n)$ and so by  Proposition~\ref{prop:asymp_sequences}
$$
\lim_{q \in [p_n,x), q \rightarrow x} \dist_\Omega(q, X_n) = \infty.
$$
So for each $n$ there exists $q_n \in [p_n, x)$ with $\dist_\Omega(q_n, X_n) = 1$. Notice that 
$$
\lim_{n \rightarrow \infty} \dist_\Omega(q_n, X) \leq \lim_{n \rightarrow \infty} \dist_\Omega^{\Haus}\left( (x,p_n], (x,p]\right) \leq \lim_{n \rightarrow \infty} \dist_\Omega(p_n, p) =0
$$
by Proposition~\ref{prop:Hausdorff_distance_between_lines}. Also, since $X_n \rightarrow X$ in the local Hausdorff topology, we must have $q_n \rightarrow x$. 

Since $q_n \in \Cc$, there exists $\{\gamma_n\}$ in  $\Gamma$ such that $\{\gamma_n(q_n)\}$ is relatively compact in $\Omega$. Then passing to a subsequence we can suppose that $\gamma_n(q_n) \rightarrow q_\infty \in \Cc$, $\gamma_n(p) \rightarrow p_\infty \in \overline{\Cc}$, $\gamma_n(X) \rightarrow X_\infty$, and $\gamma_n(X_n) \rightarrow Y_\infty$. Since $q_n \rightarrow x$, we must have $p_\infty \in \partiali \Cc$. Since $\Xc$ is $\Gamma$-invariant and closed in the local Hausdorff topology, $X_\infty, Y_\infty \in \Xc$. 

By construction $q_\infty \in X_\infty$ and $\dist_\Omega(q_\infty,Y_\infty)=1$. So $X_\infty \neq Y_\infty$. Also
$$
\lim_{n \rightarrow \infty} \dist_\Omega( \gamma_n(p), \gamma_n(p_n)) = \lim_{n \rightarrow \infty} \dist_\Omega(p_n, p) =0,
$$
so  Proposition \ref{prop:asymp_sequences} implies that $\gamma_n(p_n) \rightarrow p_\infty$. Then, by Proposition~\ref{prop:Hausdorff_distance_between_lines}, 
$$
[q_\infty, p_\infty) \subset \Nc_\Omega( Y_\infty, 2)
$$
since $[q_n, p_n] \subset \Nc_\Omega(X_n, 2)$ for all $n$. So Proposition~\ref{prop:asymp_sequences} implies that $p_\infty \in F_\Omega(\partiali Y_\infty)$. However, by construction, $p_\infty \in \partiali X_\infty$ and so Lemma~\ref{lem : disjoint faces in endgame} implies that $X_\infty = Y_\infty$. So we have a contradiction. 
\end{proof}

\begin{lemma} $\Xc$ is strongly isolated.
\end{lemma}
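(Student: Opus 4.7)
The plan is to argue by contradiction via a standard scaling/translation argument that leverages the two preceding lemmas. Suppose strong isolation fails. Then there exist $r > 0$, sequences $\{X_n\}, \{Y_n\}$ in $\Xc$ with $X_n \neq Y_n$, and points $p_n, q_n \in \Nc_\Omega(X_n, r) \cap \Nc_\Omega(Y_n, r)$ with $\dist_\Omega(p_n, q_n) \to \infty$. Choosing points of $X_n$ close to $p_n$ and $q_n$, and similarly for $Y_n$, and then invoking convexity of $X_n, Y_n$ together with Proposition~\ref{prop:Hausdorff_distance_between_lines}, I first upgrade this to $[p_n, q_n] \subset \Nc_\Omega(X_n, r) \cap \Nc_\Omega(Y_n, r)$.

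Next, using cocompactness of $\Gamma$ on $\core(\Gamma)$, I translate by $\gamma_n \in \Gamma$ so that the midpoint $\gamma_n m_n$ of $\gamma_n[p_n, q_n]$ lies in a fixed compact subset of $\Omega$. After passing to subsequences, $\gamma_n m_n \to m \in \Omega$; $\gamma_n p_n \to p$ and $\gamma_n q_n \to q$ with $p, q \in \partial \Omega$ and $p \neq q$ (since $\dist_\Omega(m, \cdot)$ is unbounded along the segment in both directions); and, by the closure condition (a) together with $\Gamma$-invariance of $\Xc$, we also get $\gamma_n X_n \to X_\infty$ and $\gamma_n Y_n \to Y_\infty$ in the local Hausdorff topology with $X_\infty, Y_\infty \in \Xc$. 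For each $s \in (p, q) \subset \Omega$, taking a limit of nearby points of $\gamma_n X_n$ yields a point of $X_\infty$ within distance $r$ of $s$, and similarly for $Y_\infty$. Letting $s \to q$ along $(p, q)$ and applying Proposition~\ref{prop:asymp_sequences}, one obtains $q \in F_\Omega(\partiali X_\infty) \cap F_\Omega(\partiali Y_\infty)$. Then Lemma~\ref{lem : disjoint faces in endgame} forces $X_\infty = Y_\infty$, and the discreteness Lemma~\ref{lem:discrete in endgame} implies $\gamma_n X_n = X_\infty = \gamma_n Y_n$ for all large $n$, hence $X_n = Y_n$, a contradiction.

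The main technical obstacle is the translation step: placing $\gamma_n m_n$ in a fixed compact subset of $\Omega$ requires the midpoints $m_n$ to lie at uniformly bounded distance from $\core(\Gamma)$. This is immediate once one knows that every $X \in \Xc$ is contained in $\core(\Gamma)$, since then $m_n \in \Nc_\Omega(X_n, r) \subset \Nc_\Omega(\core(\Gamma), r)$. Establishing this containment (equivalently $\partiali X \subset \limset(\Gamma)$ for each $X \in \Xc$) uses $\Gamma$-invariance of $\Xc$ combined with property (c) and Observation~\ref{obs: contains faces}, and is the most delicate point of the argument. A secondary subtlety is verifying that the subsequential limit $X_\infty$ is genuinely non-empty and unbounded — this follows because $\gamma_n X_n$ always meets a fixed neighborhood of $\gamma_n m_n \to m$, and since each $\gamma_n X_n$ is unbounded with bounded-distance witnesses along the segment, the limit inherits an ideal boundary point at $q$.
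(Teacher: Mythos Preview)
Your argument is essentially the paper's: contradict, extract a long projective segment in $\Nc_\Omega(X_n,r)\cap\Nc_\Omega(Y_n,r)$, translate by $\Gamma$ into a fixed compact region, pass to limits $X_\infty,Y_\infty\in\Xc$ using closedness of $\Xc$, apply Lemma~\ref{lem : disjoint faces in endgame} to get $X_\infty=Y_\infty$, and then Lemma~\ref{lem:discrete in endgame} to force $X_n=Y_n$ for large $n$. The only differences are cosmetic---the paper first shows each $C_n:=\Nc_\Omega(X_n,r)\cap\Nc_\Omega(Y_n,r)$ is bounded (another quick application of Lemma~\ref{lem : disjoint faces in endgame}) and then translates an \emph{endpoint} $p_n$ of a maximal segment in $C_n$ rather than the midpoint---and it does not explicitly address the containment $X\subset\core(\Gamma)$ that you flag, simply asserting that suitable $\gamma_n$ exist.
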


\begin{proof} Fix $r > 0$ and suppose for a contradiction that for every $n \in \Nb$ there exist $X_n, Y_n \in \Xc$ distinct such that 
$$
\diam_\Omega\left(\Nc_\Omega(X_n,r) \cap \Nc_\Omega(Y_n, r) \right) \geq n. 
$$
Proposition~\ref{prop:Hausdorff_distance_between_lines} implies that $C_n:=\Nc_\Omega(X_n,r) \cap \Nc_\Omega(Y_n, r)$ is convex. 

We claim that each $C_n$ is bounded in $(\Cc, \dist_\Omega)$. If not, there exists $x \in \partiali C_n$. Then Proposition~\ref{prop:asymp_sequences}  implies that $x \in F_\Omega(\partiali X_n)  \cap F_\Omega(\partiali Y_n)$. So $X_n = Y_n$ by Lemma~\ref{lem : disjoint faces in endgame}. Thus we have a contradiction and so each $C_n$ is bounded in $(\Cc, \dist_\Omega)$.

Now for each $n$ let $(p_n, q_n) \subset C_n$ denote an open line segment with maximal length (with respect to the Hilbert metric). Then $\dist_\Omega(p_n, q_n) \geq n$.

Now fix a sequence $\{\gamma_n\}$ in $\Gamma$ such that $\{\gamma_n(p_n)\}$ is relatively compact in $\Omega$. Passing to a subsequence we can assume $\gamma_n(p_n) \rightarrow p$, $\gamma_n(q_n) \rightarrow q$, $\gamma_n(X_n) \rightarrow X$, and $\gamma_n(Y_n) \rightarrow Y$. Then $q \in \partiali \Cc$,
$$
(q,p) \subset \Nc_\Omega(X, r+1) \cap \Nc_\Omega(Y, r+1),
$$
and $X,Y \in \Xc$. Proposition~\ref{prop:asymp_sequences}  implies that $q \in F_\Omega(\partiali X) \cap F_\Omega( \partiali Y)$. So  Lemma~\ref{lem : disjoint faces in endgame} implies that $X=Y$. Then Lemma~\ref{lem:discrete in endgame} implies that $\gamma_n(X_n) = X = Y = \gamma_n(Y_n)$ for $n$ large. So $X_n = Y_n$ for $n$ large and we have a contradiction. 
 \end{proof}
 
 The following result implies that $\Xc$ coarsely contains the properly embedded simplices of $\Cc$.
 
 \begin{lemma} If $S \subset \Cc$ is a properly embedded simplex with dimension at least two, then there exists $X \in \Xc$ with $S \subset X$.
\end{lemma}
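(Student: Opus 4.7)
My plan is to exploit the three listed properties of $\Xc$ in tandem: property (c) produces peripheral sets containing individual edges of $S$, property (b) forces these peripheral sets to coincide, and then convexity of a single element of $\Xc$ gives the conclusion.

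More precisely, let $S \subset \Cc$ be a properly embedded simplex of dimension $k \geq 2$ with vertices $v_1, \ldots, v_{k+1}$. Since $S$ is properly embedded, $\partiali S \subset \partiali \Cc = \partiali \core(\Gamma)$, so in particular each vertex $v_i$ lies in $\partiali \core(\Gamma)$. Because $k \geq 2$, for every pair $i \neq j$ the edge $[v_i, v_j]$ is contained in $\partial S \subset \partiali \core(\Gamma)$ and is a non-trivial line segment. Hence property (c) supplies some $X_{ij} \in \Xc$ with $[v_i, v_j] \subset \partiali X_{ij}$.

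Next I would fix an index, say $i = 1$, and observe that for any $j, l \neq 1$ we have $v_1 \in \partiali X_{1j} \cap \partiali X_{1l}$, so property (b) forces $X_{1j} = X_{1l}$. Call this common element $X \in \Xc$. Then $v_1, \ldots, v_{k+1}$ all lie in $\partiali X$, because $v_j \in [v_1, v_j] \subset \partiali X_{1j} = \partiali X$.

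Finally, since $\overline{X}$ is a closed convex subset of $\overline{\Omega}$ containing $\{v_1, \ldots, v_{k+1}\}$, the defining property of $\CH_\Omega$ yields
\[
\overline{S} = \CH_\Omega(v_1, \ldots, v_{k+1}) \subset \overline{X}.
\]
Intersecting with $\Omega$ and using that $X$ is closed in $\Omega$ gives $S \subset \overline{X} \cap \Omega = X$, as required.

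I do not anticipate a serious obstacle: the argument is entirely formal once one notes that the $1$-skeleton of a simplex of dimension $\geq 2$ lies on its boundary. The only step that merits a line of verification is the passage from $\overline{S} \subset \overline{X}$ to $S \subset X$, which rests on $X$ being closed in $\Omega$ and $S \subset \Omega$.
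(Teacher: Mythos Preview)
Your proof is correct and follows essentially the same approach as the paper: apply property (c) to the edges $[v_1,v_j]$, use property (b) and the common vertex $v_1$ to force all the resulting elements of $\Xc$ to coincide, and then invoke convexity of $X$ to conclude $S \subset X$. Your write-up is slightly more detailed in justifying the final convexity step, but the argument is the same.
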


\begin{proof} Fix a properly embedded simplex $S \subset \Cc$ with dimension at least two. Let $v_1,\dots, v_k$ denote the vertices of $S$. By property (c), for each $2 \leq j \leq k$ there exists some $X_j \in \Xc$ with $[v_1,v_j] \subset \partiali X_j$. Then by property (b), $X_2=X_3=\cdots = X_k$. So by convexity $S \subset X_2$. 
\end{proof}

\medskip

\noindent \textbf{The ``moreover'' part in Proposition \ref{prop: peripheral family in cc case}.} Suppose that (1) and (2) hold, and let $\Pc$ be a set of representatives of the $\Gamma$-conjugacy classes in $\{ \Stab_{\Gamma}(X) : X \in \Xc\}$. To show that the quotients $[ \partiali \Cc]_{\Xc}$ and $[\partiali \Cc]_{\Pc}$ coincide it suffices to prove the following. 

\begin{lemma} If $X \in \Xc$, then $\partiali X = F_\Omega(\partiali X) = \Lc_\Omega( \Stab_{\Gamma}(X) )$. \end{lemma}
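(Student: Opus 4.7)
The plan is to prove the two equalities separately. The inclusion $\partiali X\subset F_\Omega(\partiali X)$ is tautological since every boundary point lies in its own open face. For the reverse $F_\Omega(\partiali X)\subset \partiali X$, I would fix $y\in F_\Omega(x)$ with $x\in\partiali X$ and reduce to the case $y\ne x$, so that $[x,y]$ is a non-trivial line segment in $\partial\Omega$. Since $\Xc$ is a peripheral family of $(\Omega,\core(\Gamma),\Gamma)$ by hypothesis (1), $X\subset\core(\Gamma)$ and hence $x\in\partiali\core(\Gamma)$; Observation~\ref{obs: contains faces}(1) then places $F_\Omega(x)$ inside $\partiali\core(\Gamma)$, so $[x,y]\subset\partiali\core(\Gamma)$ is a non-trivial line segment. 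Hypothesis (c) yields some $X'\in\Xc$ with $[x,y]\subset\partiali X'$, and then $x\in\partiali X\cap\partiali X'$ combined with hypothesis (b) forces $X=X'$, so $y\in\partiali X$. This step is essentially the contrapositive of Lemma~\ref{lem : disjoint faces in endgame} repackaged.

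For the equality $\partiali X=\Lc_\Omega(\Stab_\Gamma(X))$ I would handle the two inclusions separately, exploiting that Theorem~\ref{thm:boundary_faces}(2) (which applies because $\Xc$ is a peripheral family) supplies a compact set $K\subset X$ with $\Stab_\Gamma(X)\cdot K=X$. For the inclusion $\partiali X\subset\Lc_\Omega(\Stab_\Gamma(X))$, take $y\in\partiali X$ and pick $p_n\in X$ with $p_n\to y$; choose $g_n\in\Stab_\Gamma(X)$ with $g_n^{-1}(p_n)\in K$ and pass to a subsequence so $g_n^{-1}(p_n)\to q\in K$. Then $\dist_\Omega(p_n,g_n q)\to 0$, forcing $g_n q\to y$, and in particular $y\in\Lc_\Omega(\Stab_\Gamma(X))$.

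For the reverse $\Lc_\Omega(\Stab_\Gamma(X))\subset\partiali X$, let $y=\lim_n g_n p$ with $p\in\Omega$ and $g_n\in\Stab_\Gamma(X)$. Fix any $q\in X$; since $\dist_\Omega(g_n p,g_n q)=\dist_\Omega(p,q)$ is constant, properness of the $\Aut(\Omega)$-action on $\Omega$ together with $g_n p\to\partial\Omega$ forces any subsequential limit $y'$ of $\{g_n q\}$ to lie in $\partial\Omega$, and as $\{g_n q\}\subset X$ we get $y'\in\partiali X$. Proposition~\ref{prop:asymp_sequences} then gives $F_\Omega(y)=F_\Omega(y')$, so $y\in F_\Omega(\partiali X)$, and by the first equality already established, $y\in\partiali X$.

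I expect the only care required is the properness bookkeeping that makes $g_n q$ escape to $\partial\Omega$: this follows because $g_n p\to\partial\Omega$ implies the $g_n$ are unbounded in $\Aut(\Omega)$, and properness of the action then excludes any interior accumulation of $\{g_n q\}$. No substantive obstacle beyond this routine verification is apparent; the heart of the argument is the interaction of the closed-face property from Observation~\ref{obs: contains faces} with the two isolation conditions (b), (c) of Proposition~\ref{prop: peripheral family in cc case}.
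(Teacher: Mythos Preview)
Your proof is correct and follows essentially the same route as the paper's. The only minor difference is that for the inclusion $\partiali X\subset\Lc_\Omega(\Stab_\Gamma(X))$ the paper invokes Proposition~\ref{prop:dynamics_of_automorphisms_2} directly, whereas you unpack the co-compactness argument by hand; both approaches amount to the same thing.
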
 

\begin{proof} Let $P =  \Stab_{\Gamma}(X)$. 

We first observe that $\partiali X = F_\Omega(\partiali X)$. By definition $\partiali X \subset F_\Omega(\partiali X)$. For the other inclusion, fix $x \in F_\Omega(\partiali X)$. Then fix $x^\prime \in \partiali X$ with $x \in F_\Omega(x^\prime)$. Observation~\ref{obs: contains faces} implies that $x \in \partiali\Cc$. So, if $x^\prime \neq x$, then there exists $Y \in \Xc$ with $[x^\prime, x] \subset \partiali Y$. But then $\partiali X \cap \partiali Y \neq \emptyset$ which implies that $X = Y$. So $x \in \partiali X$. Thus $\partiali X = F_\Omega(\partiali X)$. 

Next we show that $\Lc_\Omega(P)$ is a subset of $\partiali X$. Fix $x \in \Lc_\Omega(P)$, then there exist $p \in \Omega$ and a sequence $\{g_n\}$ in $P$ such that $g_n(p) \rightarrow x$. Fix $q \in X$. By passing to a subsequence we may suppose that $g_n(q) \rightarrow x^\prime \in \partiali X$. Then Proposition~\ref{prop:asymp_sequences}  implies that $x \in F_\Omega(x^\prime) \subset \partiali X$. 

Finally, Theorem \ref{thm:boundary_faces} part (2) implies that $P$ acts co-compactly on $X$ and so $\partiali X \subset \Lc_\Omega(P)$ by Proposition~\ref{prop:dynamics_of_automorphisms_2}. 
\end{proof}

\bibliographystyle{alpha}
\bibliography{geom}

\end{document}